\documentclass{amsart}

\usepackage{calc}
\usepackage{tikz}
\usepackage{pgfplots}
\usetikzlibrary{shapes}
\usepackage{amsmath,amssymb}
 \usepackage[foot]{amsaddr}
 \setcounter{tocdepth}{3}
  \newcommand\sn[1]{{\color{black} #1}}
  \newcommand\ma[1]{{\color{black} #1}}

\let\oldtocsection=\tocsection

\let\oldtocsubsection=\tocsubsection

\let\oldtocsubsubsection=\tocsubsubsection

\renewcommand{\tocsection}[2]{\vspace{0.5em}\hspace{0em}\oldtocsection{#1}{#2}}
\renewcommand{\tocsubsection}[2]{\vspace{0.5em}\hspace{1em}\oldtocsubsection{#1}{#2}}
\renewcommand{\tocsubsubsection}[2]{\vspace{0.5em}\hspace{2em}\oldtocsubsubsection{#1}{#2}}
\usepackage{graphicx,cancel,xcolor,hyperref,comment,graphicx,geometry}

\let\originallesssim\lesssim
\DeclareRobustCommand{\lesssim}{%
  \mathrel{\mathpalette\lowersim\originallesssim}%
}
\usepackage{tikz}
\usepackage{graphicx,url,etoolbox}
\usepackage{lipsum}
\usepackage{dsfont}
\usepackage{subfigure}
\usetikzlibrary{hobby,patterns}


\setlength{\hoffset}{-18pt}
\setlength{\oddsidemargin}{15pt}  
\setlength{\evensidemargin}{15pt}  
\setlength{\marginparwidth}{54pt}  
\setlength{\textwidth}{481pt}  
\setlength{\voffset}{-18pt}  
\setlength{\marginparsep}{7pt}  
\setlength{\topmargin}{0pt}  
\setlength{\headheight}{10pt}  
\setlength{\headsep}{10pt}  
\setlength{\footskip}{27pt}  
\setlength{\textheight}{650pt}

\usepackage{fancyhdr}
\pagestyle{fancy}

\fancyhead[C]{}
\fancyhead[L]{{\leftmark}}
\fancyhead[R]{}

\usepackage{lastpage}

\newtheorem{theoreme}{Theorem}[section]

\newtheorem{lemma}[theoreme]{Lemma}
\newtheorem{rem}[theoreme]{Remark}

\newtheorem{definition}[theoreme]{Definition}
\theoremstyle{definition}

\numberwithin{equation}{section}

 \renewenvironment{proof}{{\bfseries \noindent Proof.}}{\demo}
\newcommand\xqed[1]{%
  \leavevmode\unskip\penalty9999 \hbox{}\nobreak\hfill
  \quad\hbox{#1}}
\newcommand\demo{\xqed{$\square$}}

\hypersetup{bookmarks, colorlinks, urlcolor=blue, citecolor=red, linkcolor=blue, hyperfigures, pagebackref,
    pdfcreator=LaTeX, breaklinks=true, pdfpagelayout=SinglePage, bookmarksopen=true,bookmarksopenlevel=2}

\usepackage{comment}
\def\u2{\u^2}
\def\u3{\u^3}
\def\u4{\u^4}
\def\u5{\u^5}
\def\y1{\y^1}
\def\y2{\y^2}
\def\y3{\y^3}
\def\y4{\y^4}
\def\y5{\y^5}

\def\HH{\mathcal H}

\def\AA{\mathcal A}


\def\la {{\lambda}}

\newcommand {\nc}   {\newcommand}
\nc {\be}   {\begin{equation}} \nc {\ee}   {\end{equation}} \nc
{\beq}  {\begin{eqnarray}} \nc {\eeq}  {\end{eqnarray}} \nc {\beqs}
{\begin{eqnarray*}} \nc {\eeqs} {\end{eqnarray*}}
\def\edc{\end{document}}

\providecommand{\abs}[1]{\lvert#1\rvert}

\usepackage{tikz}
\usetikzlibrary{decorations.pathmorphing,patterns,scopes,intersections,calc}
\usepackage{caption}

\begin{document}
\title[\fontsize{7}{9}\selectfont  ]{Stability Results for Novel Serially-connected Magnetizable Piezoelectric and Elastic Smart-System Designs}
\author{Mohammad Akil$^{1}$, Serge Nicaise$^{1}$, Ahmet \"Ozkan \"Ozer$^{2}$  and Virginie R\'egnier$^{1}$  \vspace{0.5cm}\\
$^1$Univ. Polytechnique  Hauts-de-France, INSA Hauts-de-France,  CERAMATHS-Laboratoire de Mat\'eriaux C\'eramiques et de Math\'ematiques, F-59313 Valenciennes, France.\\ \vspace{0.5cm}
$^2$ Department of Mathematics, Western Kentucky University, Bowling Green, KY 42101, USA.
Email: mohammad.akil@uphf.fr, serge.nicaise@uphf.fr, ozkan.ozer@wku.edu, virginie.regnier@uphf.fr}

\setcounter{equation}{0}
\begin{abstract}
In this paper, the stability of longitudinal vibrations for transmission problems  of two smart-system designs are studied: (i) a serially-connected Elastic-Piezoelectric-Elastic  design with a local damping acting only on the piezoelectric layer and (ii) a serially-connected Piezoelectric-Elastic design with a local damping acting on the elastic part only. Unlike the existing literature,  piezoelectric layers are considered magnetizable, and therefore, a fully-dynamic PDE model, retaining interactions  of electromagnetic fields (due to Maxwell's equations) with the mechanical vibrations, is considered.  The design (i) is shown to have exponentially stable solutions. However, the nature of the stability of solutions of the design (ii), whether it is polynomial or exponential, is dependent  entirely upon the arithmetic nature of a quotient involving all  physical parameters. Furthermore, a polynomial decay rate is provided in terms of a measure of irrationality of the quotient. Note that this type of result is totally new (see Theorem \ref{Pol-PE} and Condition $\rm{\mathbf{(H_{Pol})}}$). The main tool used throughout the paper is the multipliers technique which requires an adaptive selection of  cut-off functions together with a particular attention to the sharpness of the estimates  to optimize the results.
\\[0.1in]
\textbf{Keywords.} magnetizable piezoelectric beams; serially-connected beams;   irrationality measure; partial viscous damping; exponential stability; polynomial  stability.
\end{abstract}

\maketitle
\pagenumbering{roman}
\maketitle
\pagenumbering{arabic}
\setcounter{page}{1}
{\color{black}
\section{Introduction}

Piezoelectric materials are multi-functional smart materials (most notably Lead Zirconate Titanate) used to develop electric displacement that is directly proportional
to an applied mechanical stress \cite{Smith}.    They can  be used as actuators/sensors, and also be integrated to a mother host structure \cite{Shi}.
Due to their small size and high power density, they have become more and more promising in industrial applications such as implantable biomedical devices and sensors \cite{Dag1,Dag2}, wearable human-machine interface sensors \cite{Dong},  and nano-positioners and micro-sensors  due to the excellent advantages of the fast response time, large mechanical force, and extremely fine resolution \cite{Ru}.

In deriving a mathematical model for the equations of motion on a piezoelectric beam, actuated by a voltage source, three major effects and their interrelations need to be considered: mechanical, electrical, and magnetic.  Mechanical effects are mostly modeled through Kirchhoff, Euler-Bernoulli, or Mindlin-Timoshenko   small (linear) \cite{Lag} or large (nonlinear) \cite{Ala} displacement  assumptions, where the constitutive relations between the nonzero stress and strain tensors are used to model longitudinal displacements of the centerline (stretching/compression), transverse    displacements (bending), and rotations of the beam. It is also reported that the small displacement assumptions lead to the bending and rotational motions completely  immune the applied voltage \cite{Morris-Ozer2013}. These tensors are coupled to the electric/magnetic displacements and electric/magnetic field tensors. There are mainly three approaches to include electromagnetic effects  due the Maxwell's equations: electrostatic, quasi-static, and fully-dynamic \cite[p. 336]{Lions}.
Electrostatic and quasi-static approaches are widely used in voltage-controlled piezoelectric beam models - see e.g. \cite{Smith} and the references therein. These models completely exclude magnetic effects and their coupling  with electrical and mechanical effects.  Even though the electro-static and quasi-static approaches are sufficient for defining piezoelectricity, electromagnetic waves generated by mechanical fields need to be accounted for in the calculation of radiated electromagnetic power from a vibrating piezoelectric acoustic device, e.g. see \cite{Yang} and the references therein.  For this reason, the fully dynamic models of piezoelectric beams are needed to be understood well.
In fact,  the dynamic effects  for (acoustic) magnetizable piezoelectric beams  are pronounced and must be taken into account in the modeling \cite{Morris-Ozer2013,Voss}.

Denote by $v(x,t)$ and  $p(x,t)$ the longitudinal vibrations of the center line of the beam and  the total charge accumulated at the electrodes of a single piezoelectric beam. Assuming that the beam is fixed at the left end $x=0$ and free at the right end $x=L$,  the equations of motion  is a system of partial differential equations \cite{Morris-Ozer2013} as the following
\begin{equation}\label{piezo}\left\{
\begin{array}{ll}
\rho v_{tt}-\alpha v_{xx}+\gamma \beta p_{xx}=0,& (x,t) \in(0,L)\times (0,\infty)\\
\mu p_{tt}-\beta p_{xx}+\gamma \beta v_{xx}=0,&\\
v(0,t)=p(0,t)=0,&\\
 \alpha v_x(L,t)-\gamma \beta p_x(L,t)=g(t),&\\
 \beta p_x(L,t)-\gamma \beta v_x(L,t)=-V(t),& t\in (0,\infty)\\
 (v,p,v_t,p_t)(x,0)=(v_0,p_0,v_1,p_1),& x\in [0,L]
\end{array}	
\right.
\end{equation}
where  $\rho$, $\alpha$, $\beta$, $\gamma$, and $\mu$ are mass density per unit volume, elastic stiffness,  impermeability, piezoelectric constant, and  magnetic permeability of the beam, respectively, and $g(t)$ and $V(t)$ are strain and voltage actuators, and
\begin{equation}\label{alpha1}
\alpha_1:=\alpha-\gamma^2\beta>0.	
\end{equation}

By the electrostatic approach, the model above is simplified to a single wave equation model by  taking $\mu \equiv 0$ and $p_t=0,$ considering $g(t)\equiv 0,$ and \eqref{alpha1}, e.g. see \cite{Morris-Ozer2014},
\begin{equation}\label{electro}\left\{
\begin{array}{ll}
\rho v_{tt}-\alpha_1 v_{xx}=0,& (x,t) \in(0,L)\times (0,\infty)\\
v(0,t)=0, \quad  \alpha_1 v_x(L,t)=\gamma V(t)& t\in (0,\infty)\\
 (v, v_t)(x,0)=(v_0,v_1),& x\in [0,L].
\end{array}	
\right.
\end{equation}
The model by the quasi-static approach is the same as \eqref{electro} yet $p_t \ne 0.$}

The exact observability/stabilizability and the type of stability of the solutions (\ref{piezo}) of the  PDE model by each approach differs substantially. For example, the PDE model obtained by electrostatic/qua\-si-static approach is the boundary damped wave equation in \eqref{electro}, and it is known to be exactly observable/exponentially stabilizable with one state measurement $v_t(x,t)$ on the boundary $x=L$, e.g. see \cite{Chen,Lag1}. In deep contrast to this result,  the PDE model obtained by the the fully-dynamic approach in \eqref{piezo} can not be exactly observable/exponentially stabilizable for almost all choices of material parameters with only one state measurement, $v_t(x,t)$ or $p_t(x,t),$ on the boundary $x=L$, e.g. see \cite{Morris-Ozer2013,Morris-Ozer2014}. Explicit polynomial decay estimates are obtained for more regular initial data and for a small class of materials satisfying certain number-theoretical conditions \cite{Morris-Ozer2014,OzerMCSS}.  The same model (\ref{piezo}) is considered in  \cite{Ramos2018} for the open-loop sensor configuration (i.e. $g(t),V(t)\equiv 0$) with a dissipative damping term $\delta v_t$ with $\delta>0,$ acting only in the first equation of \eqref{piezo}.  It is also reported that two nonzero state feedback measurements $v_t(L,t)$ (tip velocity) and $p_t(L,t)$ (total current on the electrodes)  are necessary to achieve exact observability/exponential stabilizability \cite{Ramos,Wilson}.
This underlines the fact  that the two boundary damping terms or one viscous damping term are both able to  exponentially dissipate non-stabilizing (high-frequency) magnetic effects.
There is also a large literature considering the model \eqref{piezo} under thermal  effects, fractional-type damping, and distributed or boundary-type memory and delay terms, see \cite{Abdelaziz1}-\cite{AnLiuKong},\cite{DosSan}-\cite{Feng2}, \cite{Abdelaziz2,MR4450079} and the references therein.

    \begin{figure}[htb!]
 \vspace{-0.1in}
    \centering
        {{\includegraphics[width=4.5in]{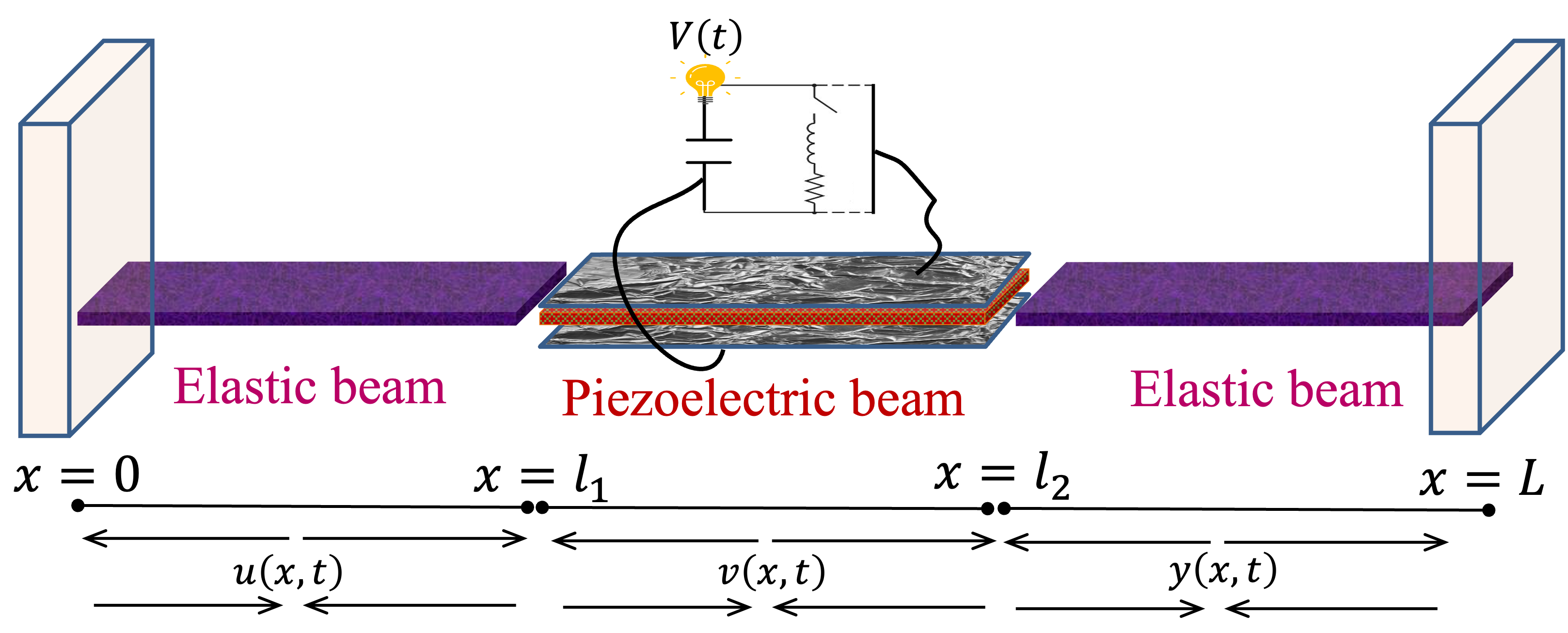} }}
    \caption{\footnotesize  Serially-connected  Elastic-Piezoelectric-Elastic transmission system clamped at both ends. The piezoelectric material  itself is an elastic material covered by electrodes at their top and bottom surfaces,  and connected to an external electric circuit. As  the elastic layers stretches or shrinks, the piezoelectric beam stretches or shrinks as well, and therefore, charges separate and line up in the vertical direction, and electric field (voltage) is induced in the electrodes. The overall motions on the system are considered to be only longitudinal.}%
    \label{EPE1}%
\end{figure}

A serially-connected smart system  is an elastic structure consisting of  longitudinally attached  fully-elastic and piezoelectric layers, see Figs. \ref{EPE1} and \ref{EP}.
Use of piezoelectric materials for a serially-connected design in various transmission mechanisms of aerospace, aviation, automobile,
ships, and robots has boosted substantially in the last decade, see \cite{Chen2,Ling} and the references therein. A rigorous mathematical treatment for a transmission problem of a three serially-connected purely-elastic waves/strings/beams is provided  in \cite{Fatori2}. Indeed, if the outer wave equations have both viscous damping terms,  an exponential stability result is shown to be immediate.
 Several authors have also studied transmission problems of serially-connected strings/beams with e.g. a thermoelastic material \cite{Naso1} or a viscoelastic material \cite{Rivera}.

To the best of our best knowledge, serially-connected transmission systems involving elastic and magnetizable piezoelectric systems are not treated mathematically in the literature, especially with Condition $\rm{\mathbf{(H_{Pol})}}$, which appears in Section $\ref{SecPol}$. The goal of this paper is to fix this gap by considering two particular designs, for which we obtain novel decay rates of the energy. 

The first design, whose PDE model is described below in \eqref{EPE}, is the transmission problem of an Elastic-Piezoelectric-Elastic system, as in Fig. \ref{EPE1}, with only one local damping acting on the longitudinal displacement of the center line of the piezoelectric material:

\begin{equation}\label{EPE}\tag{${\rm E/P/E}$}
\left\{\begin{array}{ll}
u_{tt}-c_1u_{xx}=0,& (x,t) \in(0,l_1)\times (0,\infty),\\
\rho v_{tt}-\alpha v_{xx}+\gamma\beta p_{xx}+d_2(x) v_t=0,& (x,t) \in (l_1,l_2)\times (0,\infty),\\
\mu p_{tt}-\beta p_{xx}+\gamma \beta v_{xx}=0,&(x,t) \in(l_1,l_2)\times (0,\infty),\\
y_{tt}-c_2y_{xx}=0,&(x,t) \in(l_2,L)\times (0,\infty),\\
u(0,t)=y(L,t)=0,&\\
v(l_1,t)=u(l_1,t),&\\
v(l_2,t)=y(l_2,t),&\\
\alpha v_x(l_1,t)-\gamma\beta p_x(l_1,t)=c_1u_x(l_1,t),&\\
\alpha v_x(l_2,t)-\gamma\beta p_x(l_2,t)=c_2y_x(l_2,t),&\\
\beta p_x(l_1,t)=\gamma\beta v_x(l_1,t),&\\
\beta p_x(l_2,t)=\gamma\beta v_x(l_2,t),&  t\in (0,\infty),\\
(u, v , p, y, u_t, v_t, p_t, y_t) (\cdot,0)=(u_0 ,v_0, p_0, y_0,u_1, v_1, p_1, y_1) (\cdot)
\end{array}
\right.	
\end{equation}
where $0<l_1<l_2<L$, $\ma{c_1,c_2>0}$ and  $d_2\in L^{\infty}(l_1,l_2)$, such that
\begin{equation}\label{LD-P}\tag{$\rm{LD-P}$}
d_2(x)\geq d_{2,0}>0\quad \text{in}\quad (a_2,b_2)\subset (l_1,l_2),\ \text{and}\  d_2(x)\geq 0\quad \text{in}\quad (l_1,l_2)\backslash (a_2,b_2).	
\end{equation}

    \begin{figure}[htb!]
 \vspace{-0.1in}
    \centering
           {{\includegraphics[width=3in]{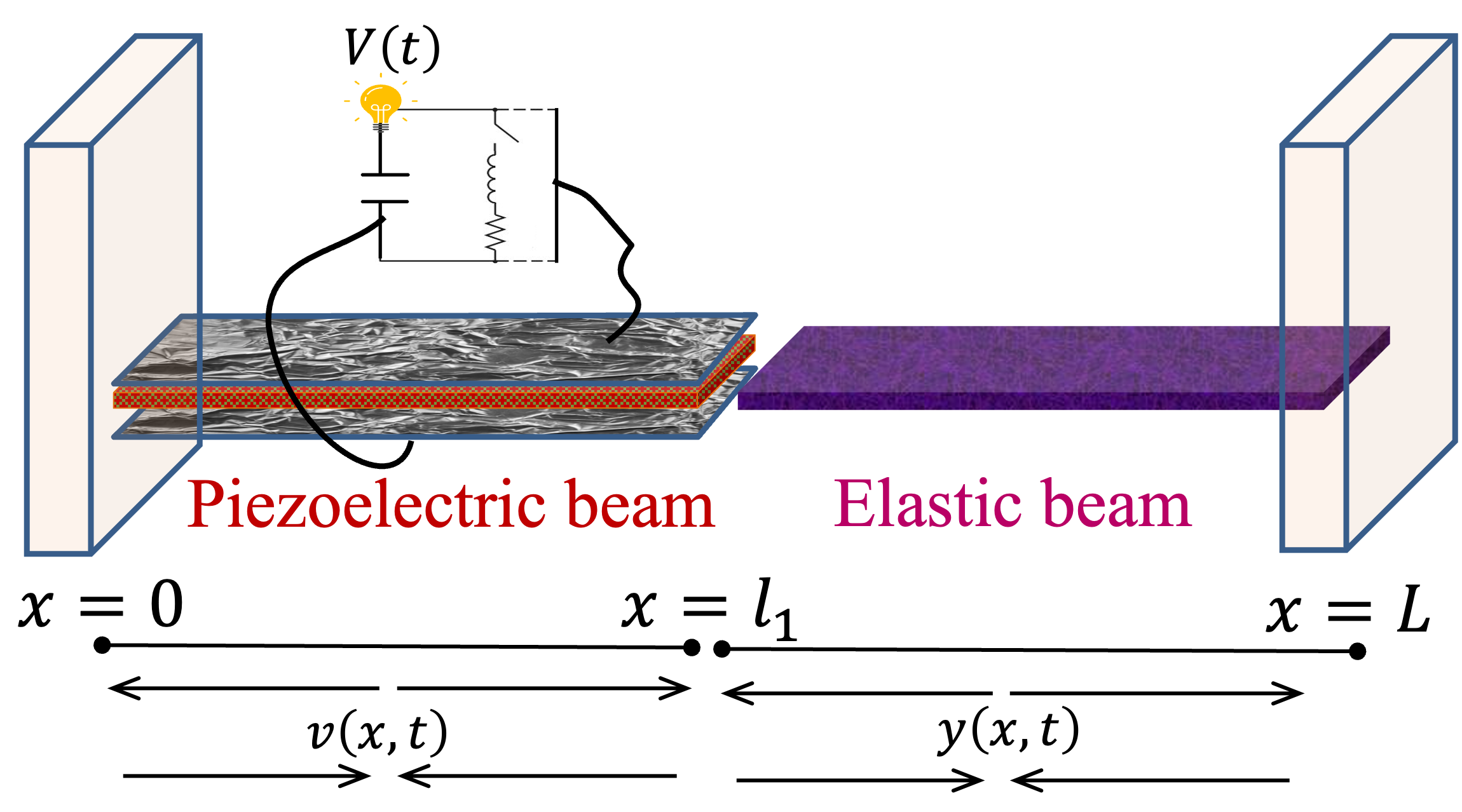} }}%
    \caption{ Serially connected  Elastic-Piezoelectric transmission line clamped at both ends.}%
    \label{EP}%
\end{figure}

The second design, whose PDE model is described below in \eqref{PE},  is for the transmission problem of a Piezoelectric-Elastic system, as in Fig. \ref{EP}, with only one local damping acting on the elastic part:
\begin{equation}\label{PE}\tag{${\rm P/E}$}
\left\{\begin{array}{ll}
\rho v_{tt}-\alpha v_{xx}+\gamma\beta p_{xx}=0,&(x,t) \in  (0,l_1)\times (0,\infty),\\
\mu p_{tt}-\beta p_{xx}+\gamma \beta v_{xx}=0,&(x,t) \in  (0,l_1)\times (0,\infty),\\
y_{tt}-c_2y_{xx}+d_1(x)y_t=0,& (x,t) \in (l_1,L)\times (0,\infty),\\
v(0,t)=p(0,t)=y(L)=0,\\
v(l_1,t)=y(l_1,t),&\\
\alpha v_x(l_1,t)-\gamma\beta p_x(l_1,t)=c_2y_x(l_1,t),&\\
\beta p_x(l_1,t)=\gamma\beta v_x(l_1,t), & t\in (0,\infty),\\
(v, p, y,v_t,p_t,y_t)(\cdot,0)=(v_0, p_0, y_0, v_1, p_1, y_1)(\cdot),
\end{array}
\right.	
\end{equation}
where $0<l_1<L$, $\ma{c_2>0}$ and $d_1\in L^{\infty}(l_1,L)$ such that
\begin{equation}\label{LD-E}\tag{$\rm{LD-E}$}
d_1(x)\geq d_{1,0}>0\quad \text{in}\quad (a_1,b_1)\subset (l_1, L),\ \text{and}\  d_1(x)\geq 0\quad \text{in}\quad (l_1, L)\backslash (a_1,b_1).
\end{equation}

\subsection{Our Contributions and Main Results}
{\color{black}
In this study, we investigated two transmission problems involving alternating magnetizable piezoelectric and elastic beams, addressing distinct scenarios of distributed and boundary damping. The first scenario, an Elastic-Piezoelectric-Elastic design, with a single local damping acting on the longitudinal displacement of the piezoelectric layer's center line, yields an immediate exponential stability result. Conversely, the second scenario, a Piezoelectric-Elastic design, with a lone local damping affecting the elastic part, exhibits stability dependent on the arithmetic nature of a quotient involving system parameters.

Our main results are listed as the following.
\begin{theoreme}\label{EPE-EXP}
If \eqref{LD-P} holds, the $C_0-$semigroup of contractions $(e^{tA_{EPE}})_{t\geq 0}$ is exponentially
stable, i.e. there exists $M\geq 1$ and $\omega >0$   such
that
\begin{equation}\label{EPE-EXP-EQ1}
\|e^{t\mathcal{A}_{EPE}}U_0\|_{\mathcal{H}}\leq Me^{-\omega t}\|U_0\|_{\mathcal{H}},\quad \forall U_0\in \HH,
\end{equation}
where $\HH$ and $\mathcal{A}_{EPE}$ are defined in \eqref{defH} and \eqref{defAEPE}, respectively.
\end{theoreme}

\begin{theoreme}\label{PE-EXP}
Assume \ma{that \eqref{LD-E} and }  $\rm{\mathbf{(H_{Exp})}},$ defined in Section \ref{SecPol}, hold. Then, the $C_0-$semigroup of contractions $(e^{t\mathcal{A}_{PE}})_{t\geq 0}$ is exponentially
stable, i.e. there exist $M\geq 1$ and $\omega >0$ such that
\begin{equation}\label{EPE-EXP-EQ1}
\|e^{t\mathcal{A}_{PE}}U_0\|_{\mathcal{H}_{PE}}\leq Me^{-\omega t}\|U_0\|_{\mathcal{H}_{PE}},\qquad  \forall U_0\in \mathcal{H}_{PE},
\end{equation}
where $ \mathcal{H}_{PE}$ and $\mathcal{A}_{PE}$ are defined in \eqref{defHPE} and \eqref{defAPE}, respectively.
\end{theoreme}

\begin{theoreme}\label{Pol-PE}
Assume \ma{that \eqref{LD-E} and }  $\rm{\mathbf{(H_{Pol})}},$ defined in Section \ref{SecPol}, hold.  Then, there exists a constant $C>0$ such that  the energy of the system \eqref{PE} satisfies the following estimate for all $t>0$
\begin{equation}\label{Pol-PE-Eq1}
\|e^{t\mathcal{A}_{PE}}U_0\|^2\leq \frac{C}{t^{\frac{2}{4\varpi\left(\frac{\sigma_+}{\sigma_-}\right)-4}}}\|U_0\|^2_{D(\mathcal{A}_{PE})},\quad \forall U_0\in D(\AA_{PE}),	
\end{equation}
where $ \mathcal{H}_{PE}$ and $\mathcal{A}_{PE}$ are defined in \eqref{defHPE} and \eqref{DAPE}, respectively.

\end{theoreme}
These results are succinctly summarized in Tables \ref{tablea} and \ref{tableb} for the EPE and PE designs, respectively. Additionally, Table \ref{table 1} provides a comprehensive overview of exponential and polynomial stabilities, contributing theoretical insights with practical applications. This discussion underscores unique facets of the stability landscape crucial for robust system performance.

The paper's structure is organized to provide detailed insights into each transmission problem. In Section \ref{Section-EPE-LD-P}, we establish well-posedness and exponential stability of the Elastic-Piezoelectric-Elastic model \eqref{EPE} under conditions (\ref{LD-P}) on the damping function $d_2$. In Section \ref{PE-LD-E}, we analyze the well-posedness and strong stability of the Piezoelectric-Elastic system \eqref{PE} under conditions (\ref{LD-E}) on the damping function $d_1$. The decay rate of the energy is intricately linked to the arithmetic nature of a quotient involving all physical parameters. Specifically, if the quotient is a rational number different from $\frac{2n_+-1}{2n_--1}$ for all $n_+,n_-\in \mathbb{N}$ (refer to \eqref{SS-SC-PE} in Theorem \ref{SS-PE}), exponential decay is proven. Conversely, if the quotient is an irrational number, the energy is demonstrated to decrease polynomially, provided the irrationality measure of this quotient is finite. The proof methodology relies on the multipliers technique, necessitating a judicious selection of cut-off functions and meticulous attention to estimate sharpness to optimize results.

It is noteworthy that our arguments in Section \ref{Section-EPE-LD-P} lead to a significant result for the electrostatic/quasi-static design, similar to the one in \cite{Fatori2}, with local damping exclusively in the middle layer. This design is shown to be exponentially stable, marking a notable advancement over the results in \cite{Fatori2}, where exponential stability was achieved solely with fully-distributed viscous damping terms for the outer layers, as outlined in Remark \ref{rem-electro}.

\begin{table}[h]
\begin{center}
\begin{tabular}{|p {1.3cm}|c|c|c|c|c|c|}
\hline  & Elastic & \textbf{}& Piezoelectric & \textbf{}& Elastic & Type of Stability\\
\hline & $(0,l_1)$& $x=l_1$& $(l_1,l_2)$& $x=l_2$& $(l_2,L)$& \\
\hline
 Damping & X & X & Partial Viscous Damping & X & X &
Exponential
 \\

\hline
\end{tabular}
\end{center}
\caption{Summary of results for the EPE design. }
\label{tablea}
 \end{table}

 \begin{table}[h]
\begin{center}
\begin{tabular}{|p {1.3cm}|c|c|c|c|c|}
\hline  & Piezo & \textbf{}& Elastic&    Type of Stability\\
\hline & $(0,l_1)$& $x=l_1$& $(l_1,L)$ & \\
\hline
 Damping & X & X & Partial Viscous Damping &
Exponential or Polynomial
 \\
\hline
\end{tabular}
\end{center}
\caption{Summary of results for the EPE design.}
\label{tableb}
 \end{table}
`X' indicates no damping in the corresponding domain.
}

\section{Stability results for the \eqref{EPE} system}\label{Section-EPE-LD-P}
\sn{Note that the assumption  \eqref{LD-P}  applies to all results in this section. For simplicity, the repetition is avoided unless it is necessary to state.}
\subsection{Well-Posedness}\label{WP-EPE}
\noindent This section is devoted to establish the well-posedness of the system \eqref{EPE} by a semigroup approach.
The natural energy of system  \eqref{EPE} is defined by
$$
E(t)=\frac{1}{2}\int_0^{l_1}\left(\abs{u_t}^2+c_1\abs{u_x}^2\right)dx+\frac{1}{2}\int_{l_1}^{l_2}\left(\rho\abs{v_t}^2+\alpha_1\abs{v_x}^2+\mu\abs{p_t}^2+\beta\abs{\gamma v_x-p_x}^2\right)dx+\frac{1}{2}\int_{l_2}^{L}\left(\abs{y_t}^2+c_2\abs{y_x}^2\right)dx.
$$
\begin{lemma}\label{denergy}
The energy $E(t)$ is dissipative along  the regular solutions $\ma{(u,v,p,y)}$ of the system \eqref{EPE}, i.e.
\begin{equation}\label{denergy1}
\frac{d}{dt}E(t)=-\int_{l_1}^{l_2}d_2\abs{v_t}^2dx.
\end{equation}	
\end{lemma}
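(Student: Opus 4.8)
The plan is to run the classical energy identity: differentiate $E(t)$ in time along a regular solution, replace each second-order time derivative by the spatial operator appearing in \eqref{EPE}, integrate by parts once in $x$ on each of the three subintervals $(0,l_1)$, $(l_1,l_2)$, $(l_2,L)$, and then check that all interior terms cancel while the boundary contributions at $x=0,l_1,l_2,L$ telescope to zero by the transmission conditions, leaving exactly $-\int_{l_1}^{l_2}d_2|v_t|^2\,dx$. A convenient preliminary observation is that, since $\alpha_1=\alpha-\gamma^2\beta$, the piezoelectric potential density can be rewritten as
\[
\alpha_1|v_x|^2+\beta|\gamma v_x-p_x|^2=\alpha|v_x|^2-2\gamma\beta\,v_xp_x+\beta|p_x|^2,
\]
so that its time derivative $\alpha v_xv_{xt}-\gamma\beta(v_xp_{xt}+p_xv_{xt})+\beta p_xp_{xt}$ is precisely the combination that comes out of integrating the PDE terms by parts.

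On the outer elastic pieces, substituting $u_{tt}=c_1u_{xx}$, $y_{tt}=c_2y_{xx}$ and integrating by parts gives
\[
\int_0^{l_1}\big(u_tu_{tt}+c_1u_xu_{xt}\big)\,dx=c_1\big[u_tu_x\big]_0^{l_1}=c_1u_t(l_1,t)u_x(l_1,t),
\]
where the endpoint $x=0$ drops out because $u(0,\cdot)\equiv0$ forces $u_t(0,\cdot)=0$; symmetrically the integral over $(l_2,L)$ equals $-c_2y_t(l_2,t)y_x(l_2,t)$ using $y(L,\cdot)\equiv0$. On the middle interval, substituting $\rho v_{tt}=\alpha v_{xx}-\gamma\beta p_{xx}-d_2v_t$ and $\mu p_{tt}=\beta p_{xx}-\gamma\beta v_{xx}$ and integrating by parts once, the $v_{xt}$- and $p_{xt}$-terms cancel against the time derivative of the potential density by the observation above, and what remains is $-\int_{l_1}^{l_2}d_2|v_t|^2\,dx$ together with the boundary term
\[
\Big[\,v_t\big(\alpha v_x-\gamma\beta p_x\big)+p_t\big(\beta p_x-\gamma\beta v_x\big)\,\Big]_{l_1}^{l_2}.
\]

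It then remains to evaluate this bracket via the interface conditions of \eqref{EPE}. At both $x=l_1$ and $x=l_2$ the identity $\beta p_x=\gamma\beta v_x$ kills the $p_t(\beta p_x-\gamma\beta v_x)$ part; continuity of the displacement, $v(l_1,\cdot)=u(l_1,\cdot)$ and $v(l_2,\cdot)=y(l_2,\cdot)$, gives $v_t(l_1,\cdot)=u_t(l_1,\cdot)$ and $v_t(l_2,\cdot)=y_t(l_2,\cdot)$; and the flux conditions $\alpha v_x(l_1,\cdot)-\gamma\beta p_x(l_1,\cdot)=c_1u_x(l_1,\cdot)$, $\alpha v_x(l_2,\cdot)-\gamma\beta p_x(l_2,\cdot)=c_2y_x(l_2,\cdot)$ turn the bracket into $c_2y_t(l_2,t)y_x(l_2,t)-c_1u_t(l_1,t)u_x(l_1,t)$. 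Adding the three interval contributions, the terms $c_1u_tu_x$ at $l_1$ and $c_2y_ty_x$ at $l_2$ cancel in pairs and only $-\int_{l_1}^{l_2}d_2|v_t|^2\,dx$ survives, which is \eqref{denergy1}.

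I do not expect any real obstacle here: this is the standard formal energy balance, so the work is essentially bookkeeping, and all the cancellations are forced by the precise algebraic form of the transmission conditions. The only point deserving a word of justification — and which is in any case furnished by the well-posedness analysis of this subsection — is that a ``regular solution'' is smooth enough (in particular possesses the one-sided spatial traces $u_x(l_1,t)$, $v_x(l_i,t)$, $p_x(l_i,t)$, $y_x(l_2,t)$ and has enough integrability in time) to legitimize differentiation under the integral sign and the integrations by parts; granting this, the computation above is purely algebraic.
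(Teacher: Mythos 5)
Your proposal is correct and follows essentially the same route as the paper: multiply each equation by the corresponding time derivative (velocity), integrate by parts on each subinterval, use the identity $\alpha_1|v_x|^2+\beta|\gamma v_x-p_x|^2=\alpha|v_x|^2-2\gamma\beta v_xp_x+\beta|p_x|^2$ coming from \eqref{alpha1}, and cancel the interface contributions via the continuity and transmission conditions, leaving only $-\int_{l_1}^{l_2}d_2|v_t|^2\,dx$. The bookkeeping and the cancellations you describe match the paper's computation, so there is nothing to add.
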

\begin{proof}
First, multiplying $\eqref{EPE}_1$ by $\overline{u}_t$, integrate by parts over $(0,l_1),$ and take the real part  to get
\begin{equation}\label{denergy1}
\frac{1}{2}\frac{d}{dt}\int_0^{l_1}|u_t|^2dx+\frac{c_1}{2}\frac{d}{dt}\int_0^{l_1}|u_x|^2dx-c_1\Re\left(u_x(l_1,t)\overline{u_t(l_1,t)}\right)=0.	
\end{equation}
Next, multiply $\eqref{EPE}_2$ by $\overline{v_t}$, integrate by parts over $(l_1,l_2),$ and take the real part to get
\begin{equation}\label{denergy2}
\begin{array}{l}
\displaystyle
\frac{\rho}{2}\frac{d}{dt}\int_{l_1}^{l_2}|v_t|^2dx+\frac{\alpha}{2}\frac{d}{dt}\int_{l_1}^{l_2}|v_x|^2dx-\alpha\Re\left(v_x(l_2,t)\overline{v_t(l_2,t)}\right)+\alpha\Re\left(v_x(l_1,t)\overline{v_t(l_1,t)}\right)\\[0.1in]
\displaystyle
-\gamma\beta\Re\left(\int_{l_1}^{l_2}p_x\overline{v_{xt}}dx\right)+\gamma\beta\Re\left(p_x(l_2,t)\overline{v_t(l_2,t)}\right)-\gamma\beta\Re\left(p_x(l_1,t)\overline{v_t(l_1,t)}\right)=-\int_{l_1}^{l_2}d_2|v_t|^2dx.		
\end{array}
\end{equation}
Now, multiply $\eqref{EPE}_3$ by $\overline{p}_t$, integrate by parts over $(l_1,l_2)$, and take the real part to get
\begin{equation*}
\begin{array}{l}
\displaystyle
\frac{\mu}{2}\frac{d}{dt}\int_{l_1}^{l_2}|p_t|^2dx+\frac{\beta}{2}\frac{d}{dt}\int_{l_1}^{l_2}|p_x|^2dx-\beta\Re\left(p_x(l_2,t)\overline{p_t(l_2,t)}\right)+\beta\Re\left(p_x(l_1,t)\overline{p_t(l_1,t)}\right)\\[0.1in]
\displaystyle
-\gamma\beta\Re\left(\int_{l_1}^{l_2}v_x\overline{p_{xt}}dx\right)+\gamma\beta\Re\left(v_x(l_2,t)\overline{p_t(l_2,t)}\right)-\gamma\beta\Re\left(v_x(l_1,t)\overline{p_t(l_1,t)}\right)=0.
\end{array}
\end{equation*}
By implementing $\eqref{EPE}_{10}$ and $\eqref{EPE}_{11},$
\begin{equation}\label{denergy3}
\frac{\mu}{2}\frac{d}{dt}\int_{l_1}^{l_2}|p_t|^2dx+\frac{\beta}{2}\frac{d}{dt}\int_{l_1}^{l_2}|p_x|^2dx-\gamma\beta\Re\left(\int_{l_1}^{l_2}v_x\overline{p_{xt}}dx\right)=0,
\end{equation}
and multiplying $\eqref{EPE}_4$ by $\overline{y}$ and integrating by parts over $(l_2,L)$ lead to
\begin{equation}\label{denergy4}
\frac{1}{2}\frac{d}{dt}\int_{l_2}^{L}|y_t|^2dx+\frac{c_2}{2}\frac{d}{dt}\int_{l_2}^{L}|y_x|^2dx+c_2\Re\left(y_x(l_2,t)\overline{y_t(l_2,t)}\right)=0.	
\end{equation}
Thus, by adding \eqref{denergy2} and \eqref{denergy3} and noting \eqref{alpha1},
\begin{equation}\label{denergy5}
\begin{array}{l}
\displaystyle
\frac{1}{2}\frac{d}{dt}\left(\int_{l_1}^{l_2}\left(\rho|v_t|^2+\alpha_1|v_x|^2+\mu|p_t|^2+\beta |\gamma v_x-p_x|^2\right)dx\right)+\Re\left(\left(\alpha v_x(l_1,t)-\gamma\beta p_x(l_1,t)\right)\overline{v_t(l_1,t)}\right)\\[0.1in]
\displaystyle
- \Re\left(\left(\alpha v_x(l_2,t)-\gamma\beta p_x(l_2,t)\right)\overline{v_t(l_2,t)}\right)=-\int_{l_1}^{l_2}d_2|v_t|^2dx.	
\end{array}	
\end{equation}
In the final step of the proof, add \eqref{denergy1}, \eqref{denergy4} and \eqref{denergy5}, use the continuity conditions $\eqref{EPE}_6$ and $\eqref{EPE}_7$ and the transmission conditions  $\eqref{EPE}_8$ and $\eqref{EPE}_9.$ Hence,  \eqref{denergy} follows.
\end{proof}

In order to have a unique solution to \eqref{EPE}, \ma{the following Hilbert spaces are introduced.} For any real numbers $a,b$ such that $a<b$,
$$
\begin{array}{ll}
\displaystyle
L^2_{\ast}(a,b)=\left\{f\in L^2(a,b);\quad \int_{a}^{b}fdx=0\right\},&\quad  H_L^1(a,b)=\left\{f\in H^1(a,b);\quad f(a)=0\right\},\\[0.1in]
H^1_{\ast}(a,b)=H^1(a,b)\cap L^2_{\ast}(a,b),& \quad H_R^1(a,b)=\left\{f\in H^1(a,b);\quad f(b)=0\right\}.
\end{array}
$$
The energy space $\mathcal{H}$ is now defined by
\begin{eqnarray}\label{defH}
\begin{array}{c}
\displaystyle
\mathcal{H}=\left\{\left(u,u^1,v,z,p,q,y,y^1\right) \in H_L^1(0,l_1)\times L^2(0,l_1)\times H^1(l_1,l_2)\times L^2(l_1,l_2)\times H_{\ast}^1(0,l_1)\times\right.\\
\displaystyle \left. L_{\ast}^2(l_1,l_2)\times H_R^1(l_2,L)\times L^2(l_2,L)~:~\ u(l_1)=v(l_1),\ y(l_2)=v(l_2)\right\},
\end{array}
\end{eqnarray}
and for $U=\left(u,u^1,v,z,p,q,y,y^1\right)\in \mathcal{H}$, a norm on $\mathcal{H}$ can be chosen of  the following form
\begin{equation}\label{norm}
\begin{array}{ll}
\displaystyle
\|U\|_{\mathcal{H}}^2=& \int_0^{l_1}\left(c_1\abs{u_x}^2+\abs{u^1}^2\right)dx+\int_{l_1}^{l_2}\left(\alpha_1\abs{v_x}^2+\rho\abs{z}^2+\beta\abs{\gamma v_x-p_x}^2+\mu\abs{q}^2\right)dx \\[0.1in]
\displaystyle
&\ma{+\int_{l_2}^{L}\left(c_2\abs{y_x}^2+\abs{y^1}^2\right)dx}.
\end{array}
\end{equation}
noting that the standard norm on $\mathcal{H}$ is
\begin{equation}\label{norm-s}
\begin{array}{l}
\|U\|_{\rm s}^2=\|u_x\|^2_{L^2(0,l_1)}+\|u^1\|_{L^2(0,l_1)}+\|v_x\|^2_{L^2(l_1,l_2)}+\|v\|_{L^2(l_1,l_2)}^2+\|z\|_{L^2(l_1,l_2)}^2+\|p_x\|_{L^2(l_1,l_2)}^2\\
\hspace{1.75cm}+\|q\|_{L^2(l_1,l_2)}^2+\|y_x\|_{L^2(l_2,L)}^2+\|y^1\|_{L^2(l_2,L)}^2.
\end{array}
\end{equation}
\begin{lemma}\label{equiv-norm}
The norm defined by \eqref{norm} is equivalent to the standard norm \eqref{norm-s} on $\mathcal{H}$, i.e. for all \\
$\ma{U=\left(u,u^1,v,z,p,q,y,y^1\right)\in \mathcal{H}}$,  there exist two positive constants $C_1$, $C_2$, independent of $U,$ such that
\begin{equation}\label{equiv-norm1}
C_1\|U\|_{s}^2\leq \|U\|_{\mathcal{H}}^2\leq C_2\|U\|_{s}^2.
\end{equation}
\end{lemma}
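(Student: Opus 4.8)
The plan is to prove the two inequalities in \eqref{equiv-norm1} by elementary bounds only; no spectral or compactness argument is needed.

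\emph{Upper bound $\norm{U}_{\mathcal H}^2\le C_2\norm{U}_{\rm s}^2$.} Each of the eight nonnegative terms making up \eqref{norm} is, up to a multiplicative constant, already one of the terms of \eqref{norm-s}, except for $\beta\abs{\gamma v_x-p_x}^2$. For that one I would use $\abs{\gamma v_x-p_x}^2\le 2\gamma^2\abs{v_x}^2+2\abs{p_x}^2$ and then collect coefficients, obtaining $C_2=\max\{c_1,\,1,\,\alpha_1+2\beta\gamma^2,\,\rho,\,2\beta,\,\mu,\,c_2\}$; note that the $\norm{v}_{L^2(l_1,l_2)}^2$ term of $\norm{U}_{\rm s}^2$ is not even needed for this direction.

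\emph{Lower bound $C_1\norm{U}_{\rm s}^2\le\norm{U}_{\mathcal H}^2$.} Here I would dominate each of the nine terms of \eqref{norm-s} by $\norm{U}_{\mathcal H}^2$. Seven of them ($\norm{u_x}^2$, $\norm{u^1}^2$, $\norm{v_x}^2$, $\norm{z}^2$, $\norm{q}^2$, $\norm{y_x}^2$, $\norm{y^1}^2$) are immediate, each being bounded by a single nonnegative term of \eqref{norm} times one of the constants $c_1^{-1},1,\alpha_1^{-1},\rho^{-1},\mu^{-1},c_2^{-1},1$. For $\norm{p_x}_{L^2(l_1,l_2)}^2$ I would write $p_x=\gamma v_x-(\gamma v_x-p_x)$, so that $\abs{p_x}^2\le 2\gamma^2\abs{v_x}^2+2\abs{\gamma v_x-p_x}^2$, and absorb both pieces using $\alpha_1>0$ (recall \eqref{alpha1}) and $\beta>0$.

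The only slightly non-routine point — which I expect to be the crux — is the estimate of $\norm{v}_{L^2(l_1,l_2)}^2$, since \eqref{norm} contains no $L^2$-norm of $v$; this is where the structure of $\mathcal H$, namely the continuity condition $u(l_1)=v(l_1)$ together with the clamping $u(0)=0$, is used. For $x\in(l_1,l_2)$ I would write
\[
v(x)=u(l_1)+\int_{l_1}^{x}v_x(s)\,ds=\int_{0}^{l_1}u_x(s)\,ds+\int_{l_1}^{x}v_x(s)\,ds,
\]
apply the Cauchy--Schwarz inequality to each integral to get $\abs{v(x)}^2\le 2l_1\norm{u_x}_{L^2(0,l_1)}^2+2(l_2-l_1)\norm{v_x}_{L^2(l_1,l_2)}^2$, and integrate in $x$ over $(l_1,l_2)$. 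This controls $\norm{v}_{L^2(l_1,l_2)}^2$ by a constant times $\norm{u_x}_{L^2(0,l_1)}^2+\norm{v_x}_{L^2(l_1,l_2)}^2$, hence by a constant times $\norm{U}_{\mathcal H}^2$ through the two bounds already obtained. Summing the nine estimates then yields $C_1$ as the reciprocal of the sum of the accumulated constants, which completes the proof. (One could equally anchor $v$ at the right endpoint via $v(l_2)=y(l_2)$ and $y(L)=0$; either choice works.)
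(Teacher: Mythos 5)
Your proposal is correct and follows essentially the same route as the paper's proof: the upper bound by Young's inequality applied to $\beta\abs{\gamma v_x-p_x}^2$, the recovery of $\norm{p_x}^2$ from writing $p_x=\gamma v_x-(\gamma v_x-p_x)$, and the control of $\norm{v}_{L^2(l_1,l_2)}^2$ via $v(x)=u(l_1)+\int_{l_1}^x v_x\,ds$ with $u(l_1)=\int_0^{l_1}u_x\,ds$ and Cauchy--Schwarz, exactly as in the paper. The only differences are cosmetic (how the final constants $C_1,C_2$ are assembled).
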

\begin{proof}
The inequality on the right with $C_2=\max(c_1,1,\alpha_1+2\beta\max(\gamma^2,1),\mu,\rho,c_2)$ is immediate by Young's inequality since
$$
\beta\|\gamma v_x-p_x\|_{L^2(l_1,l_2)}^2\leq 2\beta \gamma^2\|v_x\|_{L^2(l_1,l_2)}^2+2\beta\|p_x\|_{L^2(l_1,l_2)}^2\leq 2\beta \max(\gamma^2,1)(\|v_x\|_{L^2(l_1,l_2)}^2+\|p_x\|_{L^2(l_1,l_2)}^2).
$$

We have $\displaystyle{u(l_1)=\int_0^{l_1}u_x~dx},$ and  by the transmission condition $\ma{u(l_1)=v(l_1)},$
$$
v(x)=u(l_1)+\int_{l_1}^{x}v_t(t)~dt.
$$
Applying Young's and \ma{Cauchy-Schwarz inequalities} leads to
\begin{eqnarray}
\label{equiv-norm3}
&\abs{u(l_1)}^2\leq l_1\|u_x\|^2_{L^2(0,l_1)},	\\
\label{equiv-norm2}
&\abs{v(x)}^2\leq 2\abs{u(l_1)}^2+2(l_2-l_1)\|v_x\|^2_{L^2(l_1,l_2)}.
\end{eqnarray}
As\eqref{equiv-norm3} and \eqref{equiv-norm2} are considered together
\begin{equation}\label{equiv-norm4}
\int_{l_1}^{l_2}\abs{v(x)}^2dx\leq \underbrace{2(l_2-l_1)\max(l_1,l_2-l_1)}_{:=c_3}\left(\|u_x\|^2_{L^2(0,l_1)}+\|v_x\|^2_{L^2(l_1,l_2)}\right). 	
\end{equation}
Next, Young's inequality is applied to get
\begin{equation}\label{equiv-norm5}
\|p_x\|^2_{L^2(l_1,l_2)}\leq 2\|p_x-\gamma v_x\|^2_{L^2(l_1,l_2)}+2\gamma^2\|v_x\|^2. 	
\end{equation}
By combining \eqref{equiv-norm4} and \eqref{equiv-norm5}
$$
\begin{array}{l}
\|U\|_{s}^2 \leq (1+c_3)\|u_x\|^2_{L^2(0,l_1)}+\|u^1\|_{L^2(0,l_1)}^2+(1+2\gamma^2+c_3)\|v_x\|^2_{L^2(l_1,l_2)}+\|z\|_{L^2(l_1,l_2)}^2\\
\hspace{1.75cm}+2\|p_x-v_x\|_{L^2(l_1,l_2)}^2+\|q\|_{L^2(l_1,l_2)}^2+\|y_x\|_{L^2(l_2,L)}^2+\|y^1\|_{L^2(l_2,L)}^2.
\end{array}
$$
Hence,this leads to the left inequality of  \eqref{equiv-norm1} with
$$
C_1=\frac{1}{\max\left(1,(1+c_3)c_1^{-1},(1+2\gamma^2+c_3)\alpha_1^{-1},\rho^{-1},2\beta^{-1},\mu^{-1},c_2^{-1}\right)}.
$$
\end{proof}

\noindent Define the unbounded linear operator $\mathcal{A}_{EPE}:D(\mathcal{A}_{EPE})\subset \mathcal{H}\rightarrow \mathcal{H}$ by
\begin{eqnarray}\label{defAEPE}
\mathcal{A}_{EPE}\begin{pmatrix}
u\\ u^1\\ v\\ z\\ p\\ q\\ y\\ y^1	
\end{pmatrix}=\begin{pmatrix}
u^1\\
c_1 u_{xx}-d_1u^1\\
z\\
\frac{1}{\rho}\left(\alpha v_{xx}-\gamma\beta p_{xx}-d_2 z\right)\\
q\\
\frac{1}{\mu}\left(\beta p_{xx}-\gamma\beta v_{xx}\right)\\
y^1\\
c_2y_{xx}-d_3 y^1	
\end{pmatrix},\quad \forall U=(u,u^1,v,z,p,q,y,y^1)\in D(\AA_{EPE})
\end{eqnarray}
with the domain
$$
D(\AA_{EPE})=\left\{\begin{array}{l}
U=\left(u,u^1,v,z,p,q,y,y^1\right)\in \mathcal{H};\ u^1\in H_L^1(0,l_1),\ z\in H^1(l_1,l_2),\ q\in H_{\ast}^1(l_1,l_2),\\[0.1in] y^1\in H_R^1(l_2,L),
\,u\in H^2(0,l_1)\cap H_L^1(0,l_1),v\in H^2(l_1,l_2),\ p\in H^2(l_2,l_1)\cap H_{\ast}^{1}(l_1,l_2),\\[0.1in] y\in H^2(l_2,L)\cap H_R^1(l_2,L),~~
\alpha v_x(l_1)-\gamma\beta p_x(l_1)=c_1u_x(l_1),~~\alpha v_x(l_2)-\gamma\beta p_x(l_2)=c_2y_x(l_2),\\[0.1in] \beta p_x(l_1)=\gamma \beta v_x(l_1),~~
\beta p_x(l_2)=\gamma \beta v_x(l_2),~~ u^1(l_1)=z(l_1),~~ \text{and}\quad y^1(l_2)=z(l_2)
\end{array}
\right\}.
$$

\begin{rem}\label{NewCT}
Using \eqref{alpha1}, direct calculations show that
 the transmission conditions
 \[
 \alpha v_x(l_1)-\gamma\beta p_x(l_1)=c_1u_x(l_1) \hbox{ and }
  \beta p_x(l_1)=\gamma \beta v_x(l_1)
 \]
  are equivalent to the  transmission conditions
  \[
  \alpha_1 v_x(l_1)=c_1u_x(l_1) \hbox{ and } \alpha_1p_x(l_1)=c_1\gamma u_x(l_1),
  \]
  while
 the transmission conditions
 \[
 \alpha v_x(l_2)-\gamma\beta p_x(l_2)=c_2y_x(l_2) \hbox{ and }
 \beta p_x(l_2)=\gamma \beta v_x(l_2),\]
  are equivalent to the  transmission conditions
\[
\alpha_1 v_x(l_2)=c_2y_x(l_2) \hbox{ and } \alpha_1 p_x(l_2)=c_2\gamma y_x(l_2).\]
\end{rem}

\noindent If \ma{$(u,v,p,y)$ is a sufficiently regular solution of } the system \eqref{EPE}, it can be  transformed into a first-order evolution equation on the Hilbert space $\mathcal{H}$ as the following
\begin{equation}\label{evolution}
U_t=\mathcal{A}_{EPE}U,\quad U(0)=U_0,	
\end{equation}
where $U=(u,u_1,v,v_t,p,p_t,y,y_t)$ and $U_0=(u_0,u_1,v_0,v_1,p_0,p_1,y_0,y_1)$. \ma{By the arguments of Lemma \ref{denergy}}, for all $U=(u,u^1,v,z,p,q,y,y^1)\in D(\mathcal{A}_{EPE})$,
\begin{equation}\label{diss}
\Re\left(\left<\AA_{EPE}U,U\right>_{\mathcal{H}}\right)=-\int_{l_1}^{l_2}d_2\abs{z}^2dx,	
\end{equation}
which implies that $\mathcal{A}_{EPE}$ is dissipative. Now, let $F=(f_1,f_2,f_3,f_4,f_5,f_6,f_7,f_8)\in \mathcal{H}$. By the Lax-Milgram Theorem, one can prove  the existence of a unique  $U\in D(\mathcal{A}_{EPE})$ of
$$
-\mathcal{A}_{EPE}U=F.
$$
Therefore, the unbounded linear operator $\mathcal{A}_{EPE}$ is $m-$dissipative in $\mathcal{H},$ and consequently, $0\in \rho(\mathcal{A}_{EPE})$. Moreover, $\mathcal{A}_{EPE}$ generates a $C_0-$semigroup of contractions $\left(e^{t\mathcal{A}_{EPE}}\right)_{t\geq 0}$, \ma{by }  the Lumer-Phillips theorem. Therefore, the solution of the Cauchy problem \eqref{evolution} admits the following representation
$$
U(t)=e^{t\mathcal{A}_{EPE}}U_0,\quad t\geq 0,
$$
which leads to the well-posedness of \eqref{evolution}. The following result is immediate.
\begin{theoreme}
Letting $U_{0}\in \mathcal{H}$, the system \eqref{evolution} admits a unique weak solution $U$ satisfying
$$
U\in C^0(\mathbb{R}^+,\mathcal{H}).
$$
Moreover, if $U_0\in D(\mathcal{A}_{EPE})$, the system \eqref{evolution} admits a unique strong solution $U$ satisfying
$$
U\in C^1(\mathbb{R}^{+},\mathcal{H})\cap C^0(\mathbb{R}^+,D(\mathcal{A}_{EPE})).
$$

\end{theoreme}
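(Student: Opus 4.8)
The statement is a direct consequence of the general theory of linear contraction semigroups, since the substantive work has already been carried out: \eqref{diss} shows that $\mathcal{A}_{EPE}$ is dissipative, and the Lax--Milgram argument recorded just before the theorem shows that $-\mathcal{A}_{EPE}$ is onto, so $\mathcal{A}_{EPE}$ is $m$-dissipative with $0\in\rho(\mathcal{A}_{EPE})$. By the Lumer--Phillips theorem it generates a $C_0$-semigroup of contractions $\bigl(e^{t\mathcal{A}_{EPE}}\bigr)_{t\ge 0}$ on $\mathcal{H}$, and the classical Hille--Yosida theory of the abstract Cauchy problem then gives: for $U_0\in\mathcal{H}$ the orbit $U(t)=e^{t\mathcal{A}_{EPE}}U_0$ is the unique mild (weak) solution of \eqref{evolution} and belongs to $C^0(\mathbb{R}^+,\mathcal{H})$; for $U_0\in D(\mathcal{A}_{EPE})$ the same orbit is the unique strong solution, $t\mapsto U(t)$ being continuously differentiable into $\mathcal{H}$ with $U_t=\mathcal{A}_{EPE}U(t)$, whence $U\in C^1(\mathbb{R}^+,\mathcal{H})\cap C^0(\mathbb{R}^+,D(\mathcal{A}_{EPE}))$ (the latter with $D(\mathcal{A}_{EPE})$ carrying the graph norm). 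So, strictly speaking, nothing is left to prove at the level of the theorem; what follows is only a sketch of how I would carry out its one nontrivial ingredient, the surjectivity of $-\mathcal{A}_{EPE}$, in case a self-contained argument is wanted.

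Given $F=(f_1,\dots,f_8)\in\mathcal{H}$, the equation $-\mathcal{A}_{EPE}U=F$ forces $u^1=-f_1$, $z=-f_3$, $q=-f_5$, $y^1=-f_7$; the required memberships of these four slots in $D(\mathcal{A}_{EPE})$, as well as the compatibilities $u^1(l_1)=z(l_1)$ and $y^1(l_2)=z(l_2)$, are automatic from $F\in\mathcal{H}$ (in particular from $f_1(l_1)=f_3(l_1)$ and $f_7(l_2)=f_3(l_2)$). What remains is a coupled elliptic transmission problem for $(u,v,p,y)$, which I would solve variationally on the closed subspace
\[
V=\Bigl\{(u,v,p,y)\in H_L^1(0,l_1)\times H^1(l_1,l_2)\times H^1_{\ast}(l_1,l_2)\times H_R^1(l_2,L)\ :\ u(l_1)=v(l_1),\ v(l_2)=y(l_2)\Bigr\},
\]
equipped with the sesquilinear form
\[
a\bigl((u,v,p,y),(\tilde u,\tilde v,\tilde p,\tilde y)\bigr)=c_1\int_0^{l_1} u_x\overline{\tilde u_x}\,dx+\int_{l_1}^{l_2}\Bigl(\alpha_1 v_x\overline{\tilde v_x}+\beta(\gamma v_x-p_x)\overline{(\gamma\tilde v_x-\tilde p_x)}\Bigr)dx+c_2\int_{l_2}^{L} y_x\overline{\tilde y_x}\,dx,
\]
which is exactly what integration by parts produces once one uses the transmission conditions (equivalently those of Remark \ref{NewCT}) and the interface constraints defining $V$; its diagonal $a((u,v,p,y),(u,v,p,y))=c_1\|u_x\|^2+\alpha_1\|v_x\|^2+\beta\|\gamma v_x-p_x\|^2+c_2\|y_x\|^2$ is the square of $\|\cdot\|_{\mathcal{H}}$ restricted to the position components, and the right-hand side is the obvious bounded antilinear functional built from $f_2$, $\rho f_4+d_2 f_3$, $\mu f_6$ and $f_8$.

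Coercivity of $a$ on $V$ follows from exactly the Poincaré-type chaining used in Lemma \ref{equiv-norm} — propagating $u(0)=0$ through the interface equalities to control $v$ and $y$, and using the zero-mean constraint on $p$ together with $\|\gamma v_x-p_x\|$ to control $p$ — while continuity of $a$ is immediate, so Lax--Milgram yields a unique $(u,v,p,y)\in V$. Elliptic bootstrapping then gives $H^2$-regularity on each of $(0,l_1)$, $(l_1,l_2)$, $(l_2,L)$, and testing with functions not vanishing at $l_1,l_2$ recovers the transmission conditions listed in $D(\mathcal{A}_{EPE})$, so that $U=(u,-f_1,v,-f_3,p,-f_5,y,-f_7)\in D(\mathcal{A}_{EPE})$ solves $-\mathcal{A}_{EPE}U=F$. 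I expect no real obstacle here: the only point requiring genuine care is the zero-mean bookkeeping — since a constant test function in the $p$-slot makes $a$ vanish and $f_6\in L^2_{\ast}(l_1,l_2)$ makes $\int_{l_1}^{l_2}\mu f_6=0$, the variational identity actually holds against all of $H^1(l_1,l_2)$ in that slot, so no spurious constant contaminates the $p$-equation and $p$ indeed stays in $H^1_{\ast}$ — together with the routine tracking of constants in the coercivity bound; the semigroup part of the argument is textbook.
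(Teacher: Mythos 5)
Your proposal is correct and follows essentially the same route as the paper: the theorem is obtained as an immediate consequence of the $m$-dissipativity of $\mathcal{A}_{EPE}$ (dissipativity via \eqref{diss} plus surjectivity of $-\mathcal{A}_{EPE}$ by Lax--Milgram) together with the Lumer--Phillips theorem and standard semigroup theory for the abstract Cauchy problem. Your variational sketch of the surjectivity step simply fills in the detail that the paper asserts without proof, and it is consistent with the paper's framework (in particular the coercivity argument via Lemma \ref{equiv-norm} and the zero-mean bookkeeping for the $p$-component).
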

\subsection{Strong Stability}\label{SStability}
Now the following result is about the strong stability of \eqref{EPE}.
\begin{theoreme}\label{SS-EPE}
The $C_0-$semigroup of contraction $\left(e^{t\mathcal{A}_{EPE}}\right)$ is strongly stable in $\mathcal{H}$; i.e., for all $U_0\in \mathcal{H}$, the solution of \eqref{evolution} satisfies
$$
\lim_{t\to \infty}\|e^{t\mathcal{A}_{EPE}}U_0\|_{\mathcal{H}}=0.
$$	
\end{theoreme}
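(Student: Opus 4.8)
The plan is to apply the Arendt--Batty / Lyubich--V\~u theorem. Since $(e^{t\mathcal{A}_{EPE}})_{t\geq0}$ is a bounded $C_0$-semigroup on the Hilbert space $\mathcal{H}$, it is strongly stable as soon as $\sigma(\mathcal{A}_{EPE})\cap i\mathbb{R}$ is countable and $\mathcal{A}_{EPE}$ carries no eigenvalue on the imaginary axis. The countability is free here: on bounded intervals every component of an element of $D(\mathcal{A}_{EPE})$ gains one order of Sobolev regularity, so $D(\mathcal{A}_{EPE})\hookrightarrow\mathcal{H}$ is compact, $\mathcal{A}_{EPE}$ has compact resolvent, and $\sigma(\mathcal{A}_{EPE})$ reduces to a discrete set of eigenvalues. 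Everything therefore comes down to showing that $\mathcal{A}_{EPE}U=i\lambda U$, with $\lambda\in\mathbb{R}$ and $U=(u,u^1,v,z,p,q,y,y^1)\in D(\mathcal{A}_{EPE})$, forces $U=0$; the value $\lambda=0$ is already excluded since $0\in\rho(\mathcal{A}_{EPE})$, so I would assume $\lambda\neq0$.

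First I would feed the eigenvalue equation into the dissipation identity: taking the real part of $\langle\mathcal{A}_{EPE}U,U\rangle_{\mathcal{H}}=i\lambda\|U\|_{\mathcal{H}}^2$ and using \eqref{diss} yields $\int_{l_1}^{l_2}d_2|z|^2\,dx=0$, hence $z\equiv0$ on $(a_2,b_2)$ by \eqref{LD-P}; since $z=i\lambda v$ with $\lambda\neq0$, also $v\equiv0$ on $(a_2,b_2)$. Substituting $v\equiv0$ into the two interior relations contained in $\mathcal{A}_{EPE}U=i\lambda U$, namely $\alpha v_{xx}-\gamma\beta p_{xx}=(i\lambda d_2-\lambda^2\rho)v$ and $\beta p_{xx}-\gamma\beta v_{xx}=-\lambda^2\mu p$, and using $\gamma\beta\neq0$ and $\lambda\neq0$, I get successively $p_{xx}\equiv0$ and then $p\equiv0$ on $(a_2,b_2)$. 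Thus $(v,v_x,p,p_x)$ vanishes at some interior point of $(a_2,b_2)$.

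Next I would propagate this. After eliminating $z=i\lambda v$ and $q=i\lambda p$, the pair $(v,p)$ solves on $(l_1,l_2)$ a $2\times2$ system of second-order ODEs whose leading matrix $\left(\begin{smallmatrix}\alpha&-\gamma\beta\\-\gamma\beta&\beta\end{smallmatrix}\right)$ has determinant $\beta\alpha_1>0$ by \eqref{alpha1}; solving for $(v_{xx},p_{xx})$ and setting $Y=(v,v_x,p,p_x)$, this becomes a first-order system $Y'=M(x)Y$ with $M\in L^\infty(l_1,l_2)$ (its only non-constant entry carrying the bounded measurable function $d_2$). Carath\'eodory/Gr\"onwall uniqueness then forces $Y\equiv0$ on $[l_1,l_2]$, i.e.\ $v\equiv p\equiv0$ there; in particular $v(l_1)=v_x(l_1)=v(l_2)=v_x(l_2)=0$. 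I would then transfer this to the elastic layers through the interface conditions: by Remark~\ref{NewCT}, $\alpha_1v_x(l_1)=c_1u_x(l_1)$ gives $u_x(l_1)=0$ while $u(l_1)=v(l_1)=0$, and since $u$ satisfies a homogeneous second-order linear ODE on $(0,l_1)$ (namely $c_1u_{xx}+\lambda^2u=0$), Cauchy uniqueness yields $u\equiv0$; symmetrically $y\equiv0$ on $(l_2,L)$. Finally $u^1=i\lambda u$, $q=i\lambda p$ and $y^1=i\lambda y$ all vanish, so $U=0$ --- a contradiction. Hence $\sigma(\mathcal{A}_{EPE})\cap i\mathbb{R}=\emptyset$ and the Arendt--Batty theorem gives Theorem~\ref{SS-EPE}.

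The hardest point is the interior uniqueness just used: because $d_2$ is merely in $L^\infty(l_1,l_2)$ it cannot be differentiated, so the vanishing must be carried along the first-order system with bounded measurable coefficients rather than through a classical unique-continuation-across-a-subinterval statement. It is also worth emphasizing that $\gamma\beta\neq0$ is essential --- for $\gamma=0$ the $p$-equation decouples and genuine purely imaginary eigenvalues appear --- so it is precisely the piezoelectric coupling that drives the electromagnetic component to zero on the damped sub-interval. Everything else is routine ODE theory together with bookkeeping of the transmission and continuity conditions.
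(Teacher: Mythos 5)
Your proof is correct and follows essentially the same route as the paper: Arendt--Batty with compact resolvent, the dissipation identity forcing $v$ (and then, via the $\gamma\beta$-coupling, $p$) to vanish on $(a_2,b_2)$, propagation of $(v,v_x,p,p_x)=0$ to all of $(l_1,l_2)$ through a first-order linear ODE system, and transfer to the outer elastic layers through the continuity and transmission conditions plus Cauchy uniqueness. The only cosmetic difference is that the paper first notes $d_2 z\equiv 0$ on all of $(l_1,l_2)$, so its propagated system has constant coefficients and is solved by a matrix exponential, whereas you retain the $i\lambda d_2 v$ term and invoke Carath\'eodory/Gr\"onwall uniqueness for $L^\infty$ coefficients --- both are valid.
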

\begin{proof} Since the resolvent of $\mathcal{A}_{EPE}$ is compact in $\mathcal{H}$, it follows from the Arendt-Batty's theorem (see page 837 in \cite{Arendt01}) that
 the system \eqref{EPE} is strongly
stable if and only if $\mathcal{A}_{EPE}$ does not have pure imaginary eigenvalues, i.e.  $\sigma(\mathcal{A}_{EPE})\cap i\mathbb{R}=\emptyset$. From Section \ref{WP-EPE}, it is already know that
 $0\in \rho(\mathcal{A}_{EPE})$. Therefore, only $\sigma(\mathcal{A}
_{EPE})\cap i\mathbb{R}^{\ast}=\emptyset$ \footnote{as usual, $\mathbb{R}^{\ast}=\mathbb{R}\backslash \{0\}$.} must be proved. For this purpose, suppose  that
there exists a real number $\la\neq 0$ and $U=(u,u^1,v,z,p,q,y,y^1)\in D(\mathcal{A}
_{EPE})$ such that
\begin{equation}\label{SS-EQ1}
\mathcal{A}_{EPE}U=i\la U.	
\end{equation}
which is equivalent to the following system
\begin{equation}\label{SS-EQ2}
u^1=i\la u\ \text{in}\ (0,l_1),\quad z=i\la v\ \text{in}\ (l_1,l_2),\quad q=i\la p\ \text{in}\ (l_1,l_2),\quad 	y^1=i\la y\ \text{in}\ (l_2,L),
\end{equation}
and
\begin{equation}\label{SS-EQ3}
\left\{\begin{array}{ll}
\la^2u+c_1u_{xx}=0,& x\in (0,l_1),\\
\rho \la^2v+\alpha v_{xx}-\gamma\beta p_{xx}-d_2z=0,&x\in(l_1,l_2),\\
\mu \la^2p+\beta p_{xx}-\gamma \beta v_{xx}=0,&x\in(l_1,l_2),\\
\la^2 y+c_2 y_{xx}=0,&x\in(l_2,L), 	
\end{array}
\right.	
\end{equation}
From \eqref{diss}, \eqref{LD-P} and  \eqref{SS-EQ1},
\begin{equation}\label{SS-EQ4}
0=\Re \left(i\la U,U\right)_{\mathcal{H}}=\Re\left(\mathcal{A}_{EPE}
U,U\right)_{\mathcal{H}}=-\int_{l_1}^{l_2}d_2\abs{z}^2dx.
\end{equation}
On the other hand, from \eqref{SS-EQ2}, \eqref{SS-EQ4}, \eqref{LD-P} and the fact that $\la\neq 0$, we have
\begin{equation}\label{SS-EQ5}
d_2 z=0\ \text{in}\ (l_1,l_2)\ \text{and consequently}\quad z=v=0, \quad x\in(a_2,b_2).
\end{equation}
By $\alpha=\alpha_1+\gamma^2\beta$ and \eqref{SS-EQ5} in $\eqref{SS-EQ3}_2$,
\begin{equation}\label{SS-EQ6}
\rho \la^2 v+\alpha_1 v_{xx}+\gamma\left(\gamma \beta v_{xx}-\beta p_{xx}\right)=0,\quad x\in (l_1,l_2). 	
\end{equation}
Combining $\eqref{SS-EQ3}_{3}$ and \eqref{SS-EQ6} leads to
\begin{equation}\label{SS-EQ7}
\la^2(\rho v+\gamma\mu p)+\alpha_1 v_{xx}=0,\quad \quad x\in (l_1,l_2). 	
\end{equation}
Next, by \eqref{SS-EQ5} in \eqref{SS-EQ7} and  $\la\neq 0$ \ma{we get $p=0$ in $(a_2,b_2)$},  the third equation in \eqref{SS-EQ2} yields
\begin{equation}\label{SS-EQ8}
p=q=0\quad \text{in}\quad (a_2,b_2). 	
\end{equation}
Since $v,p\in H^2(l_1,l_2)\subset C^1([a_2,b_2])$,
\begin{equation}\label{SS-EQ9}
v(\zeta)=v_x(\zeta)=p(\zeta)=p_x(\zeta)=0, \quad \zeta\in \{a_2,b_2\}. 	
\end{equation}
Now, combining \eqref{SS-EQ7} and $\eqref{SS-EQ3}_3$, the following reduced system is obtained
\begin{eqnarray}
v_{xx}&=&-\la^2\alpha_1^{-1}\left(\rho v+\gamma\mu p\right),\quad x\in (l_1,l_2)\label{SS-EQ10}\\
p_{xx}&=&-\la^2\alpha_1^{-1}\left(\gamma\rho v+\mu\alpha\beta^{-1}p\right),\quad x \in (l_1,l_2)\label{SS-EQ11}.	
\end{eqnarray}
Let $U_{piezo}=(v,v_x,p,p_x)^{\top}$. From \eqref{SS-EQ9},  $U_{piezo}(b_2)=0$. Now, the system \eqref{SS-EQ10}-\eqref{SS-EQ11} can be written in $(b_2,l_2)$ as the following
\begin{equation}\label{SS-EQ12}
\left(U_{piezo}\right)_x=B U_{piezo}\quad \text{in}\quad (b_2,l_2),	
\end{equation}
where
$$
B=\begin{pmatrix}
0&1&0&0\\
-\rho \alpha_1^{-1}\la^2&0&-\gamma\mu \alpha_1^{-1}\la^2&0\\
0&0&0&1\\
-\rho\gamma\alpha_1^{-1}\la^2&0&-\mu\alpha\beta^{-1}\alpha_1^{-1}\la^2&0	
\end{pmatrix}.
$$
The solution of the differential equation \eqref{SS-EQ12} is given by
\begin{equation}\label{SS-EQ13}
U_{piezo}(x)=e^{B(x-b_2)}U_{piezo}(b_2)=0\quad \text{in}\quad (b_2,l_2).	
\end{equation}
Analogously, it can be proved that $U_{piezo}=0$ in $(l_1,a_2)$. Consequently,  $v=p=0$ in $(l_1,l_2)$. Since $v,p\in H^2(l_1,l_2)\subset C^1([l_1,l_2])$,
\begin{equation}\label{SS-EQ14}
v(\zeta)=v_x(\zeta)=p(\zeta)=p_x(\zeta)=0\quad \text{where}\quad \zeta\in \{l_1,l_2\}. 	
\end{equation}
By $U\in D(\mathcal{A}_{EPE})$, the continuity and transmission conditions,
\begin{equation}\label{SS-EQ15}
u(0)=u(l_1)=u_x(l_1)=y(l_2)=y_x(l_2)=y(L)=0.
\end{equation}
Finally, by $\eqref{SS-EQ3}_1$, $\eqref{SS-EQ3}_4$ and \eqref{SS-EQ15} it is easy to conclude that
$
u=0$ in $(0,l_1)$ and $y=0$ in $(l_2,L)$.
Hence,  $U=0$. The proof is thus complete.
\end{proof}
\subsection{Exponential Stability} The aim of the subsection is to prove the Thereom \ref{EPE-EXP}, which is the exponential
stability of System \eqref{EPE} \sn{under the sole assumption  \eqref{LD-P}}. 

 Before diving into the technicality of the proof of Theorem \ref{EPE}, recall from, e.g. \cite{Huang01}, \cite{pruss01}, that a $C_0-$semigroup of contractions
$\left(e^{t\mathcal{A}_{EPE}}\right)_{t\geq 0}$ on $\mathcal{H}$ must satisfy two conditions
\eqref{EPE-EXP-EQ1} if
\begin{equation}\label{M1}\tag{$\rm{M1}$}
i\mathbb{R}\subset \rho\left(\mathcal{A}_{EPE}\right)	
\end{equation}
\begin{equation}\label{M2}\tag{$\rm{M2}$}
\sup_{\la \in \mathbb{R}}\|\left(i\la I-\mathcal{A}_{EPE}\right)^{-1}\|_{\mathcal{L}(\mathcal{H})}<+\infty. 	
\end{equation}
\ma{Since we already proved in Theorem \ref{SS-EPE} that} $i\mathbb{R}\subset \rho(\mathcal{A}_{EPE})$,  condition \eqref{M1} is satisfied. Now only the condition \eqref{M2} must be proved. We follow a contradiction argument,  for this purpose, suppose that \eqref{M2} is false, then there exists $\{(\la^n,U^n)\}_{n\geq 1}\subset \mathbb{R}^{\ast}\times D(\mathcal{A}_{EPE})$ with
\begin{equation}\label{EPE-EXP-EQ2}
\abs{\la^n}\to \infty\quad \text{and}\quad \|U^n\|_{\mathcal{H}}=\|\left(u^n,u^{1,n},v^n,z^n,p^n,q^n,y^n,y^{1,n}\right)^{\top}\|_{\mathcal{H}}=1, 	
\end{equation}
such that
\begin{equation}\label{EPE-EXP-EQ3}
\left(i\la^nI-\mathcal{A}_{EPE}\right)U^n= \mathcal{F}^n:=\left(f^{1,n},f^{2,n},f^{3,n},f^{4,n},f^{5,n},f^{6,n},f^{7,n},f^{8,n}\right)^{\top}\to 0\quad \text{in}\quad \mathcal{H}.
\end{equation}
For simplicity, let the index $n$ be dropped. Then, \eqref{EPE-EXP-EQ3} is equivalent to
\begin{equation}\label{P1-EXP}
\left\{\begin{array}{lll}
i\la u-u^1=f^1\to 0&\text{in}&H^1_L(0,l_1),\\
i\la v-z=f^3\to 0&\text{in}&H^1(l_1,l_2),\\
i\la p-q=f^5\to 0&\text{in}&H^1_{\ast}(l_1,l_2),\\
i\la y-y^1=f^7\to 0&\text{in}&H^1_{R}(l_2,L),\\
\end{array}
\right.
\end{equation}
and
\begin{equation}\label{P2-EXP}
\left\{\begin{array}{lll}
i\la u^1-c_1u^1_{xx}=f^2\to 0&\text{in}&L^2(0,l_1),\\
i\la \rho z-\alpha v_{xx}+\gamma \beta p_{xx}+d_2z=\rho f^4\to 0&\text{in}&L^2(l_1,l_2),\\
i\la \mu q-\beta p_{xx}+\gamma \beta v_{xx}=\mu f^6\to 0&\text{in}&L^2(l_1,l_2),\\
i\la y^1-c_2y_{xx}=f^8\to 0&\text{in}&L^2(l_2,L).
\end{array}
\right.	
\end{equation}
Merging \eqref{P1-EXP} and \eqref{P2-EXP}, a more compact system of equations is obtained
\begin{equation}\label{P3-EXP}
\left\{\begin{array}{lll}
\la^2u+c_1u_{xx}=F^1,\\
\la^2\rho v+\alpha v_{xx}-\gamma\beta p_{xx}-i\la d_2v=F^2,\\
\la^2\mu p+\beta p_{xx}-\gamma\beta v_{xx}=F^3,\\
\la^2y+c_2y_{xx}=F^4,
\end{array}
\right.
\end{equation}
where
\begin{equation}\label{F1234}
\left\{\begin{array}{l}
\displaystyle
F^1=-\left(f^2+i\la f^1\right),\ F^2=-\left(\rho f^4+ d_2 f^3+i\la \rho f^3\right),\\[0.1in] \displaystyle
F^3=-\left(\mu f^6+i\la \mu f^5\right)\quad \text{and}\quad F^4=-(f^8+i\la f^7).
\end{array}
\right.
\end{equation}
By $\alpha=\alpha_1+\gamma^2\beta$ in $\eqref{P3-EXP}_{2}$,
\begin{equation*}
\la^2\rho v+\alpha_1v_{xx}+\gamma\left(\gamma\beta v_{xx}-\beta p_{xx}\right)-i\la d v=F^2.	
\end{equation*}
Now, combining $\eqref{P3-EXP}_3$ and the above equality lead to
$$
\alpha_1 v_{xx}=-\la^2\rho v-\gamma\la^2\mu p+i\la d_2 v+F^2+\gamma F^3.
$$
Inserting the above equation in $\eqref{P3-EXP}_3$, the system is reduced to
\begin{equation}\label{P4-EXP}
\left\{\begin{array}{l}
\la^2u+c_1u_{xx}=F^1,\\
\la^2\rho v+\alpha_1v_{xx}+\gamma\mu \la^2p-i\la d_2v=F^5,\\
\la^2\mu\alpha p+\alpha_1\beta p_{xx}+\rho\gamma\beta\la^2v-i\la \gamma\beta d_2v=F^6,\\
\la^2y+c_2y_{xx}=F^4,
\end{array}
\right.	
\end{equation}
where
\begin{equation}\label{F56}
F^5=F^2+\gamma F^3\quad \text{and}\quad F^6=\alpha F^3+\gamma\beta F^2. 	
\end{equation}
At this moment, the following series of technical lemmas, as consequences of the dissipativity property of the solutions $(u,u^1,v,z,p,q,y,y^1)$ of the system \eqref{P1-EXP}-\eqref{P2-EXP}, are needed to finish the proof of Theorem \ref{EPE}.
\begin{lemma}\label{Lemma1}
The solution $(u,u^1,v,z,p,q,y,y^1)$ of the system \eqref{P1-EXP}-\eqref{P2-EXP} satisfies the following estimates
\begin{equation}\label{Lemma1-EQ1}
\int_{l_1}^{l_2}d_2\abs{z}^2dx=o(1),\quad \int_{l_1}^{l_2}d_2\abs{\la v}^2dx=o(1)\quad \text{and}\quad \int_{a_2}^{b_2}\abs{\la v}^2dx=o(1).
\end{equation}
\end{lemma}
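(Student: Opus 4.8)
The plan is to extract all three estimates from the dissipation identity \eqref{diss} together with the first-order relations \eqref{P1-EXP}; this is the routine ``dissipation step'' that opens the contradiction argument. First I would test \eqref{EPE-EXP-EQ3} against $U=U^n$ in $\mathcal{H}$, take the real part, and use \eqref{diss}. Since $\la$ is real one has $\Re\left(i\la\|U\|^2_{\mathcal{H}}\right)=0$, so that
\begin{equation*}
\int_{l_1}^{l_2}d_2\abs{z}^2\,dx=-\Re\left(\left<\mathcal{A}_{EPE}U,U\right>_{\mathcal{H}}\right)=\Re\left(\left<\mathcal{F},U\right>_{\mathcal{H}}\right)\leq \|\mathcal{F}\|_{\mathcal{H}}\|U\|_{\mathcal{H}}=o(1),
\end{equation*}
using $\|U\|_{\mathcal{H}}=1$ and $\mathcal{F}\to 0$ in $\mathcal{H}$ from \eqref{EPE-EXP-EQ2}--\eqref{EPE-EXP-EQ3}. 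This gives the first estimate.

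For the second estimate I would use the second line of \eqref{P1-EXP}, i.e. $i\la v=z+f^3$ in $(l_1,l_2)$, so that pointwise $\abs{\la v}^2\leq 2\abs{z}^2+2\abs{f^3}^2$. Multiplying by $d_2\geq 0$ and integrating over $(l_1,l_2)$,
\begin{equation*}
\int_{l_1}^{l_2}d_2\abs{\la v}^2\,dx\leq 2\int_{l_1}^{l_2}d_2\abs{z}^2\,dx+2\|d_2\|_{L^{\infty}(l_1,l_2)}\|f^3\|^2_{L^2(l_1,l_2)}=o(1),
\end{equation*}
where the first term is $o(1)$ by the previous step and the second term is $o(1)$ because $f^3\to 0$ in $H^1(l_1,l_2)$, hence in $L^2(l_1,l_2)$, and $d_2\in L^\infty(l_1,l_2)$. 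Finally, the third estimate is immediate from \eqref{LD-P}: on $(a_2,b_2)$ one has $d_2\geq d_{2,0}>0$, hence
\begin{equation*}
\int_{a_2}^{b_2}\abs{\la v}^2\,dx\leq \frac{1}{d_{2,0}}\int_{a_2}^{b_2}d_2\abs{\la v}^2\,dx\leq \frac{1}{d_{2,0}}\int_{l_1}^{l_2}d_2\abs{\la v}^2\,dx=o(1).
\end{equation*}

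There is no genuine obstacle in this lemma; the only point deserving care is that, since $d_2$ is allowed to vanish on $(l_1,l_2)\setminus(a_2,b_2)$, one cannot yet control the full $\int_{l_1}^{l_2}\abs{\la v}^2\,dx$, only its $d_2$-weighted version and its restriction to the effective damping region $(a_2,b_2)$. Propagating this smallness of $\la v$ (and of $v$, $p$, $z$, $q$) from $(a_2,b_2)$ across the whole piezoelectric layer, and then onto the two elastic layers via the continuity and transmission conditions in $D(\mathcal{A}_{EPE})$ and Remark \ref{NewCT}, is precisely what the subsequent technical lemmas and the multiplier/cut-off arguments are designed to do.
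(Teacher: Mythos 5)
Your proof is correct and follows essentially the same route as the paper: the first estimate comes from pairing $(i\la I-\mathcal{A}_{EPE})U=\mathcal{F}$ with $U$ and using the dissipation identity \eqref{diss}, the second from the relation $i\la v=z+f^3$ weighted by $d_2$ (the paper phrases this as multiplying $\eqref{P1-EXP}_2$ by $\sqrt{d_2}$), and the third from the lower bound $d_2\geq d_{2,0}$ on $(a_2,b_2)$ in \eqref{LD-P}. Your closing remark about only controlling the $d_2$-weighted quantity at this stage is accurate and consistent with the role of the subsequent lemmas.
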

\begin{proof}
To get the first estimate in \eqref{Lemma1-EQ1}, take the inner product of \eqref{EPE-EXP-EQ3} with $U$ in $\mathcal{H}$, and use  $\|U\|_{\mathcal{H}}=1$ and $\|\mathcal{F}\|_{\mathcal{H}}=o(1)$ so that
\begin{equation}\label{Lemma1-EQ2}
\int_{l_1}^{l_2}d_2\abs{z}^2dx=-\Re\left(\left<\mathcal{A}_{EPE}U,U\right>_{\mathcal{H}}\right)= \Re\left<(i\la I-\mathcal{A}_{EPE})U,U\right>_{\mathcal{H}} = \Re\left<\mathcal{F},U\right>_{\mathcal{H}}=o(1). 	
\end{equation}
Next, by multiplying $\eqref{P1-EXP}_{2}$ by $\sqrt{d_2}$, using the first estimation in \eqref{Lemma1-EQ1}, and $\|\mathcal{F}\|_{\mathcal{H}}=o(1)$, the second estimate in \eqref{Lemma1-EQ1} is immediate. Finally, by \eqref{LD-P} and the second estimate in \eqref{Lemma1-EQ1}, the third estimate  in \eqref{Lemma1-EQ1} is obtained.  	
\end{proof}
\newline
\\
\noindent Note that for all $0<\varepsilon<\frac{b_2-a_2}{4}$, the following cut-off functions are fixed
\begin{enumerate}
\item[$\bullet$] $\theta_k\in C^2([l_1,l_2])$, $k\in \left\{1,2\right\}$ such that $0\leq \theta_k(x)\leq 1$, for all $x\in [l_1,l_2]$ and
$$
\theta_k(x)=\left\{\begin{array}{lll}
1&\text{if}&x\in [a_2+k\varepsilon,b_2-k\varepsilon],\\
0&\text{if}&x\in [\ma{l_1},a_2+(k-1)\varepsilon]\cup [b_2+(1-k)\varepsilon,\ma{l_2}]. 	
\end{array}
\right.
$$
\end{enumerate}
\ma{Observe  that $\theta_1\equiv 1$ on the support of $\theta_2$.}
\begin{lemma}\label{Lemma2}
The solution $(u,u^1,v,z,p,q,y,y^1)$ of the system \eqref{P1-EXP}-\eqref{P2-EXP} satisfies the following estimates
\begin{equation}\label{Lemma2-EQ1}
\int_{l_1}^{l_2}\theta_1\abs{\la p}^2dx=o(1),\quad \int_{D_{\varepsilon}}\abs{\la p}
^2dx=o(1),\quad \text{and}\quad \int_{D_{\varepsilon}}\abs{q}
^2dx=o(1),	
\end{equation}
where $D_{\varepsilon}:=(a_2+\varepsilon,b_2-\varepsilon)$ with a positive real number $\varepsilon$ small enough such that $\varepsilon<\frac{b_2-a_2}{4}$.	
\end{lemma}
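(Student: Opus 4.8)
\textbf{Proof plan for Lemma \ref{Lemma2}.}
The plan is to propagate the damping information established in Lemma \ref{Lemma1} --- namely that $\|\sqrt{d_2}\,z\|$, $\|\sqrt{d_2}\,\la v\|$ and $\|\la v\|_{L^2(a_2,b_2)}$ are all $o(1)$ --- from the mechanical variable $v$ to the electromagnetic variable $p$ on the damped subinterval, by exploiting the coupling term $\gamma\beta v_{xx}$ in $\eqref{P2-EXP}_3$. First I would multiply the reduced equation $\eqref{P4-EXP}_3$ (the one for $p$ only, i.e. $\la^2\mu\alpha p+\alpha_1\beta p_{xx}+\rho\gamma\beta\la^2 v - i\la\gamma\beta d_2 v = F^6$) by $\theta_1\bar p$ and integrate over $(l_1,l_2)$. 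Integration by parts in the $p_{xx}$ term, using that $\theta_1$ vanishes near both endpoints so all boundary terms drop, gives
\[
\la^2\mu\alpha\int_{l_1}^{l_2}\theta_1|p|^2\,dx-\alpha_1\beta\int_{l_1}^{l_2}\theta_1|p_x|^2\,dx-\alpha_1\beta\int_{l_1}^{l_2}\theta_1'p_x\bar p\,dx
+\rho\gamma\beta\la^2\int_{l_1}^{l_2}\theta_1 v\bar p\,dx-i\la\gamma\beta\int_{l_1}^{l_2}\theta_1 d_2 v\bar p\,dx=\int_{l_1}^{l_2}\theta_1 F^6\bar p\,dx.
\]
Here $\|F^6\|=o(1)$ from \eqref{F56} and \eqref{F1234}, and $\|\la v\|_{L^2(\mathrm{supp}\,\theta_1)}=o(1)$ from the third estimate of \eqref{Lemma1-EQ1}, so both the $\rho\gamma\beta\la^2\int\theta_1 v\bar p$ term and the damping term are controlled \emph{provided} we have an a priori bound on $\|p\|$; since $\|U\|_{\mathcal H}=1$ gives $\|p_x\|_{L^2(l_1,l_2)}=O(1)$ and $p\in H^1_\ast$, Poincar\'e gives $\|p\|_{L^2}=O(1)$, and similarly $\|p_x\|=O(1)$ handles the $\theta_1'p_x\bar p$ term. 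Dividing through, rearranging, and using that $\la^2\int\theta_1|p|^2$ dominates $\la^2\mu\alpha\int\theta_1|p|^2$ up to the $o(1)$ remainders and the nonnegative (hence harmless, once moved to the other side with the correct sign) elastic term $-\alpha_1\beta\int\theta_1|p_x|^2$, I would extract $\la^2\int_{l_1}^{l_2}\theta_1|p|^2\,dx=o(1)$, which is the first estimate in \eqref{Lemma2-EQ1}. Since $\theta_1\equiv1$ on $D_\varepsilon\subset[a_2+\varepsilon,b_2-\varepsilon]$ by the choice of cut-off (the remark that $\theta_1\equiv1$ on $\mathrm{supp}\,\theta_2$ and more precisely $\theta_1=1$ on $[a_2+\varepsilon,b_2-\varepsilon]$), the second estimate $\int_{D_\varepsilon}|\la p|^2\,dx=o(1)$ follows at once. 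Finally, $\eqref{P1-EXP}_3$ reads $q=i\la p-f^5$ with $f^5\to0$ in $H^1_\ast(l_1,l_2)$, hence in $L^2$; so $\int_{D_\varepsilon}|q|^2\,dx\le 2\int_{D_\varepsilon}|\la p|^2\,dx+2\|f^5\|_{L^2}^2=o(1)$, giving the third estimate.

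The delicate point is the sign bookkeeping when isolating $\la^2\int\theta_1|p|^2$: the term $-\alpha_1\beta\int\theta_1|p_x|^2$ has the ``wrong'' sign to be absorbed, so one must be careful that it appears on the side that makes it harmless --- i.e. rewrite the identity as $\la^2\mu\alpha\int\theta_1|p|^2 = \alpha_1\beta\int\theta_1|p_x|^2+\alpha_1\beta\int\theta_1'p_x\bar p-\rho\gamma\beta\la^2\int\theta_1 v\bar p+i\la\gamma\beta\int\theta_1 d_2 v\bar p+\int\theta_1 F^6\bar p$, and observe that \emph{this is not yet $o(1)$} because $\alpha_1\beta\int\theta_1|p_x|^2$ is merely $O(1)$. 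In other words, multiplying by $\bar p$ alone is not enough; the standard fix --- and what I expect the actual proof does --- is to instead multiply by $\theta_1^2\bar p$ or to first obtain an $o(1)$ bound on $\int\theta_2|p_x|^2$ via a separate multiplier (multiply $\eqref{P4-EXP}_3$ by $\theta_1 \bar p$ \emph{and} multiply $\eqref{P4-EXP}_2$ type identities, or multiply $\eqref{P2-EXP}_3$ directly by $\theta_1\bar p$ and use $\int\theta_1|\la p|^2$ against $\int\theta_1|p_x|^2$). So the main obstacle is to close the loop between $\int\theta_1|\la p|^2$ and $\int\theta_1|p_x|^2$: one needs a second multiplier identity so that the elastic energy of $p$ on the slightly smaller window is itself shown to be $o(1)$, and only then does the $\la^2$-weighted $L^2$ bound come out clean. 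Concretely, I would multiply $\eqref{P2-EXP}_3$, i.e. $i\la\mu q-\beta p_{xx}+\gamma\beta v_{xx}=\mu f^6$, by $\theta_1\bar p$; the $-\beta\int\theta_1 p_{xx}\bar p$ term integrates by parts to $\beta\int\theta_1|p_x|^2+\beta\int\theta_1'p_x\bar p$, the $i\la\mu\int\theta_1 q\bar p = -\la^2\mu\int\theta_1|p|^2 + i\mu\la\int\theta_1 f^5\bar p \cdot(\cdots)$ type term is handled by the first estimate just proved, and $\gamma\beta\int\theta_1 v_{xx}\bar p = -\gamma\beta\int\theta_1 v_x\bar p_x-\gamma\beta\int\theta_1'v_x\bar p$ is $o(1)$ once one also knows $\|v_x\|_{L^2(\mathrm{supp}\,\theta_1)}=o(1)$.

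This last remark flags a genuine dependency: controlling $\int\theta_1|p_x|^2$ via the coupling seems to require $\|v_x\|$ small on the damped window, which is \emph{not} among the three estimates of Lemma \ref{Lemma1} (those only give $\|v\|$ and $\|\la v\|$ small there). So either the intended argument first upgrades Lemma \ref{Lemma1} to include $\int_{a_2}^{b_2}|v_x|^2=o(1)$ by multiplying $\eqref{P4-EXP}_2$ by a cut-off times $\bar v$ --- using $\la^2\rho\int\theta|v|^2=\la^2\|v\|_{L^2}^2$-type smallness, $\gamma\mu\la^2\int\theta p\bar v$ bounded by Cauchy--Schwarz against $\|\la v\|_{o(1)}\cdot\|\la p\|_{O(1)}$ (careful: that product is $o(1)\cdot O(1)=o(1)$), the damping term $o(1)$, and $\|F^5\|=o(1)$ --- or the proof of Lemma \ref{Lemma2} runs these two cut-off multipliers simultaneously on nested windows $\mathrm{supp}\,\theta_2\subset\{\theta_1=1\}$, peeling an $\varepsilon$ each time, which is exactly why two cut-off functions $\theta_1,\theta_2$ were introduced. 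I expect the actual proof to exploit precisely this nesting: derive $\int\theta_1|\la p|^2=o(1)$ first (crudely, tolerating the $O(1)$ error from $\int\theta_1|p_x|^2$ by instead using the \emph{larger} cut-off and the already-available $\la v$ estimate more cleverly --- multiply $\eqref{P2-EXP}_3$ by $\theta_1\overline{q}/\la = \theta_1\overline{(i\la p - f^5)}/\la$ so the leading term becomes $i\mu\int\theta_1|q|^2$ and the $p_{xx}$ term pairs with $\bar p_x$ giving the right sign), then immediately read off the $D_\varepsilon$ estimates. In any case the routine estimates are all of the form ``$o(1)\times O(1)$ via Cauchy--Schwarz, plus $\|\mathcal F\|_{\mathcal H}=o(1)$'', and the only real content is choosing the multiplier so that the indefinite-sign $|p_x|^2$ term either cancels or lands with a favorable sign.
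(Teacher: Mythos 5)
You have correctly isolated the real difficulty (the indefinite balance between $\int\theta_1\abs{\la p}^2$ and $\int\theta_1\abs{p_x}^2$), but the proposal never actually resolves it, so the first and decisive estimate in \eqref{Lemma2-EQ1} is not proved. Every multiplier you try pairs a $p$-equation with $\bar p$ (or $\bar q/\la$): multiplying $\eqref{P4-EXP}_3$ by $\theta_1\bar p$ gives $\la^2\mu\alpha\int\theta_1\abs{p}^2-\alpha_1\beta\int\theta_1\abs{p_x}^2=o(1)$, and multiplying $\eqref{P2-EXP}_3$ by $\theta_1\bar p$ or by $\theta_1\bar q/\la$ gives, after taking real (resp.\ imaginary) parts, $-\mu\int\theta_1\abs{\la p}^2+\beta\int\theta_1\abs{p_x}^2+\gamma\beta\,\Re\int\theta_1 v_x\bar p_x\,dx=o(1)$ up to harmless terms. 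In all cases the two unknown quantities enter with opposite signs, so nothing closes --- and you cannot borrow the $\abs{p_x}^2$ bound, since in the paper that is Lemma \ref{Lemma4}, whose proof uses Lemma \ref{Lemma2}. Even your fallback route (first proving $\int\theta\abs{v_x}^2=o(1)$ by multiplying $\eqref{P4-EXP}_2$ by a cut-off times $\bar v$, which is legitimate and is exactly the paper's Lemma \ref{Lemma3}, needing only Lemma \ref{Lemma1}) does not rescue the step as you describe it, because you then return to $\eqref{P2-EXP}_3$ times $\theta_1\bar p$, which still leaves the same indefinite difference once the cross term is $o(1)$.

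The missing idea is to get $\abs{\la p}^2$ from the \emph{$v$-equation}, not from a $p$-equation: multiply $\eqref{P4-EXP}_2$ by $\beta\theta_1\bar p$. There the coupling term $\gamma\mu\la^2 p$ produces $\gamma\mu\beta\int\theta_1\abs{\la p}^2dx$ with a definite sign, no $\abs{p_x}^2$ term occurs at all, and after the routine $o(1)$ estimates (using Lemma \ref{Lemma1} on $\mathrm{supp}\,\theta_1\subset(a_2,b_2)$ and $\|\mathcal F\|_{\mathcal H}=o(1)$) the only unknown left is the mixed term $\alpha_1\beta\,\Re\int\theta_1 v_x\bar p_x\,dx$. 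That term is then shown to be $o(1)$ by the companion multiplier: multiply $\eqref{P4-EXP}_3$ by $-\theta_1\bar v$, where every term except $\alpha_1\beta\int\theta_1 p_x\bar v_x\,dx$ is $o(1)$ by Lemma \ref{Lemma1} (in particular $\rho\gamma\beta\int\theta_1\abs{\la v}^2dx=o(1)$). Substituting back yields $\int_{l_1}^{l_2}\theta_1\abs{\la p}^2dx=o(1)$; your handling of the remaining two estimates (restriction to $D_\varepsilon$ where $\theta_1\equiv1$, and $q=i\la p-f^5$) agrees with the paper and is fine. So the gap is concretely the choice of the cross multipliers ($v$-equation against $\bar p$, $p$-equation against $\bar v$) that lets the troublesome term cancel; without it, none of the identities you propose yields the first estimate.
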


\begin{proof}
First, multiply $\eqref{P4-EXP}_2$ by $\beta \theta_1\overline{p}$, integrate over $(l_1,l_2)$ by parts, and use definition of $\theta_1$ to get
\begin{equation}\label{Lemma2-EQ2}
\begin{array}{l}
\displaystyle
\la^2\rho\beta\int_{l_1}^{l_2}\theta_1v\overline{p}dx-\alpha_1\beta\int_{l_1}^{l_2}\theta_1v_x\overline{p}_xdx-\alpha_1\beta\int_{l_1}^{l_2}\theta_1'v_x\overline{p}dx\\
\displaystyle
+\gamma\mu\beta\int_{l_1}^{l_2}\theta_1\abs{\la p}^2dx-i\la \beta\int_{l_1}^{l_2}d_2\theta_1 v\overline{p}dx=\beta \int_{l_1}^{l_2}\theta_1F^5\overline{p}dx.	
\end{array}	
\end{equation}	
It is known that  $\|U\|_{\mathcal{H}}=1$ and $\|\mathcal{F}\|_{\mathcal{H}}=o(1),$ which implies in particular that $(\la p)$ is uniformly bounded in $L^2(l_1,l_2)$ due to $\eqref{P1-EXP}_{3}$). Therefore, by Cauchy-Schwarz inequality, Lemma \ref{Lemma1}, the definition of $\theta_1$ the following is deduced
\begin{equation*}
\left|\la^2\rho\beta\int_{l_1}^{l_2}\theta_1v\overline{p}dx\right|=o(1),\  \left|\int_{l_1}^{l_2}\theta_1'v_x\overline{p}dx\right|=O(\la^{-1})=o(1),\  \left|i\la \beta\int_{l_1}^{l_2}d_2\theta_1 v\overline{p}dx\right|=o(\la^{-1}),\  \left|\int_{l_1}^{l_2}\theta_1F^5\overline{p}dx\right|=o(1).	
\end{equation*}
Inserting the above estimates into  \eqref{Lemma2-EQ2} and taking the real part leads to
\begin{equation}\label{Lemma2-EQ3}
\gamma\mu\beta\int_{l_1}^{l_2}\theta_1\abs{\la p}^2dx-\alpha_1\beta\Re\left(\int_{l_1}^{l_2}\theta_1v_x\overline{p}_xdx\right)=o(1).	
\end{equation}
Analogously, multiply $\eqref{P4-EXP}_3$ by $-\theta_1\overline{v}$, integrate over $(l_1,l_2)$ by parts to get
\begin{equation}\label{Lemma2-EQ4}
\begin{array}{l}
\displaystyle 	
-\la^2\mu\alpha\int_{l_1}^{l_2}\theta_1p\overline{v}dx+\alpha_1\beta\int_{l_1}
^{l_2}\theta_1'p_x\overline{v}dx+\alpha_1\beta\int_{l_1}
^{l_2}\theta_1p_x\overline{v_x}dx\\
\displaystyle
-\rho\gamma\beta\int_{l_1}^{l_2}\theta_1\abs{\la v}^2dx+i\la \gamma\beta\int_{l_1}
^{l_2}\theta_1d_2\abs{v}^2dx=-\int_{l_1}^{l_2}\theta_1F^6\overline{v}dx.
\end{array}
\end{equation}
By the  definition of $\theta_1, $ $\|U\|_{\mathcal{H}}=1$, $\|\mathcal{F}\|_{\mathcal{H}}=o(1)$,  Cauchy-Schwarz inequality, Lemma \ref{Lemma1}
\begin{equation}\label{Lemma2-EQ5}
\left|\la^2\mu\alpha\int_{l_1}^{l_2}\theta_1 p\overline{v}dx\right|=o(1), \left|\int_{l_1}
^{l_2}\theta_1'p_x\overline{v}dx\right|=o(\la^{-1})\quad \text{and}\quad  \left|
\int_{l_1}^{l_2}\theta_1F^6\overline{v}dx\right|=o(1). 	
\end{equation}
Inserting the estimates above  into \eqref{Lemma2-EQ4} together with Lemma \ref{Lemma1}  yields
\begin{equation}\label{Lemma2-EQ6}
\Re\left(\int_{l_1}^{l_2}\theta_1p_x\overline{v_x}dx\right)=o(1).	
\end{equation}
Thus, the combination of \eqref{Lemma2-EQ3} and \eqref{Lemma2-EQ6} gets the first estimate in
\eqref{Lemma2-EQ1}, and together with which, and the definition
of $\theta_1$,  the second estimate in \eqref{Lemma2-EQ1} is immediate. Finally, by the second
estimate in \eqref{Lemma2-EQ1}, $\eqref{P1-EXP}_3$ and the fact that $\|\mathcal{F}\|
_{\mathcal{H}}=o(1)$,  the third estimate in \eqref{Lemma2-EQ1} is obtained.
\end{proof}
\begin{lemma}\label{Lemma3}
The solution $(u,u^1,v,z,p,q,y,y^1)$ of the system \eqref{P1-EXP}-\eqref{P2-EXP} satisfies the following estimate
\begin{equation}\label{Lemma3-EQ1}
\int_{l_1}^{l_2}\theta_1\abs{v_x}^2dx=o(1),\quad \text{and consequently}\quad \int_{D_{\varepsilon}}\abs{v_x}^2dx=o(1). 	
\end{equation}	
\end{lemma}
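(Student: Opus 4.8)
The plan is to run one more localized multiplier estimate, this time testing the reduced $v$--equation $\eqref{P4-EXP}_2$ against $\theta_1\overline v$. Multiplying $\eqref{P4-EXP}_2$ by $\theta_1\overline v$, integrating over $(l_1,l_2)$, and integrating the term $\alpha_1\int_{l_1}^{l_2}\theta_1 v_{xx}\overline v\,dx$ by parts — the boundary contributions at $l_1$ and $l_2$ vanish because $\theta_1$ is supported in $[a_2,b_2]$ — produces an identity of the form
\[
-\alpha_1\int_{l_1}^{l_2}\theta_1\abs{v_x}^2dx=-\la^2\rho\int_{l_1}^{l_2}\theta_1\abs{v}^2dx+\alpha_1\int_{l_1}^{l_2}\theta_1'v_x\overline v\,dx-\gamma\mu\la^2\int_{l_1}^{l_2}\theta_1 p\overline v\,dx+i\la\int_{l_1}^{l_2}\theta_1 d_2\abs{v}^2dx+\int_{l_1}^{l_2}\theta_1 F^5\overline v\,dx,
\]
and the goal is to show each term on the right is $o(1)$.

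The key inputs are that $\theta_1$ is supported in $[a_2,b_2]$, where Lemma~\ref{Lemma1} gives $\|\la v\|_{L^2(a_2,b_2)}=o(1)$ (hence $\|v\|_{L^2(a_2,b_2)}=o(\la^{-1})$), together with $\int_{l_1}^{l_2}d_2\abs{\la v}^2dx=o(1)$, and that $\|v_x\|_{L^2(l_1,l_2)}=O(1)$ since $\|U\|_{\HH}=1$. Thus: $\la^2\int\theta_1\abs v^2\le\int_{a_2}^{b_2}\abs{\la v}^2=o(1)$; the term $\int\theta_1'v_x\overline v$ is $\le\|\theta_1'\|_\infty\|v_x\|_{L^2}\|v\|_{L^2(a_2,b_2)}=O(1)\cdot o(\la^{-1})=o(1)$; the term $\la\int\theta_1 d_2\abs v^2=\la^{-1}\int\theta_1 d_2\abs{\la v}^2\le\la^{-1}\int_{l_1}^{l_2}d_2\abs{\la v}^2=o(1)$; and by Cauchy--Schwarz $\la^2\int\theta_1 p\overline v\le\big(\int\theta_1\abs{\la p}^2\big)^{1/2}\big(\int_{a_2}^{b_2}\abs{\la v}^2\big)^{1/2}=o(1)$, using Lemma~\ref{Lemma2} for the first factor.

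For the source term $\int\theta_1 F^5\overline v$, recall $F^5=F^2+\gamma F^3$ is, up to fixed constants, a combination of $f^4$, $f^6$, $d_2 f^3$ and of the terms $\la f^3$ and $\la f^5$; the factors of $\la$ are absorbed by the smallness of $v$ on $\supp\theta_1$, e.g. $\big|\la\int\theta_1 f^3\overline v\big|\le\abs\la\,\|f^3\|_{L^2}\|v\|_{L^2(a_2,b_2)}=\abs\la\cdot o(1)\cdot o(\la^{-1})=o(1)$, and similarly for the $\la f^5$ piece, while the remaining pieces pair an $o(1)$ $L^2$--datum with the bounded $v$. Collecting everything yields $\alpha_1\int_{l_1}^{l_2}\theta_1\abs{v_x}^2dx=o(1)$, and dividing by $\alpha_1>0$ gives the first estimate in \eqref{Lemma3-EQ1}; the second follows at once since $\theta_1\equiv1$ on $D_\varepsilon$.

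I do not expect a genuine obstacle here: the whole mechanism is that every potentially dangerous term carries an explicit factor $\la$ or $\la^2$ which is tamed by the a priori smallness of $v$ (order $\la^{-1}$) and of $\la p$ on $\supp\theta_1$ furnished by Lemmas~\ref{Lemma1} and~\ref{Lemma2}; the only mildly delicate point is the bookkeeping of the $\la$--factors inside $F^5$. As an alternative one could test the un-reduced equation $\eqref{P3-EXP}_2$ against $\theta_1\overline v$, in which case the cross term $-\gamma\beta\int\theta_1 p_{xx}\overline v$ integrates by parts into $\gamma\beta\int\theta_1 p_x\overline{v_x}+\gamma\beta\int\theta_1'p_x\overline v$, and one invokes $\eqref{Lemma2-EQ6}$ in place of the $\la p$ bound; the conclusion is the same.
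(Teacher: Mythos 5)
Your proposal is correct and follows essentially the same route as the paper: the paper multiplies $\eqref{P4-EXP}_2$ by $-\theta_1\overline{v}$ (yours uses $+\theta_1\overline{v}$, a trivial sign change), integrates by parts with the boundary terms killed by the support of $\theta_1$, and disposes of each remaining term exactly as you do, using Lemma \ref{Lemma1} (and the boundedness of $\la p$, $v_x$ from $\|U\|_{\mathcal H}=1$) together with $\|\mathcal F\|_{\mathcal H}=o(1)$ for the $F^5$ term. Your bookkeeping of the $\la$-factors inside $F^5$ matches what the paper leaves implicit, so there is no gap.
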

\begin{proof}
Multiplying $\eqref{P4-EXP}_2$ by $-\theta_1\overline{v}$ and integrating over $(l_1,l_2)$  by parts yield
\begin{equation}\label{Lemma3-EQ2}
\begin{array}{l}
\displaystyle
-\rho\int_{l_1}^{l_2}\theta_1\abs{\la v}^2dx+\alpha_1\int_{l_1}^{l_2}\theta_1'v_x\overline{v}dx+\alpha_1\int_{l_1}^{l_2}\theta_1\abs{v_x}^2dx-\gamma\mu \int_{l_1}^{l_2}\theta_1\la^2p\overline{v}dx\\
\displaystyle
+i\la \gamma\beta \int_{l_1}^{l_2}\theta_1 d_2\abs{v}^2dx=-\int_{l_1}^{l_2}\theta_1F^5\overline{v}dx.
\end{array}	
\end{equation}
By Cauchy-Schwarz inequality, Lemma \ref{Lemma1}, the definition of $\theta_1$, $\|U\|_{\mathcal{H}}=1,$ and $\|\mathcal{F}\|_{\mathcal{H}}=o(1)$, the following hold
$$
\left|\int_{l_1}^{l_2}\theta_1'v_x\overline{v}dx\right|=o(\la^{-1}),\ \left|\int_{l_1}^{l_2}\theta_1\la^2p\overline{v}dx\right|=o(1) \quad \text{and}\quad \int_{l_1}^{l_2}\theta_1F^5\overline{v}dx=o(1).
$$	
Finally, inserting these estimates in \eqref{Lemma3-EQ2} and by \eqref{Lemma1-EQ1}, the first estimate, and therefore the second estimate, in \eqref{Lemma3-EQ1} are obtained.
\end{proof}
\begin{lemma}\label{Lemma4}
The solution $(u,u^1,v,z,p,q,y,y^1)$ of the system \eqref{P1-EXP}-\eqref{P2-EXP} satisfies the
following estimates
\begin{equation}\label{Lemma4-EQ1}
\int_{l_1}^{l_2}\theta_2\abs{p_x}^2dx=o(1),\quad \text{and consequently }\quad \int_{D_{2\varepsilon}}
\abs{p_x}^2dx=o(1)
\end{equation}
\ma{where $D_{2\varepsilon}:=(a_2+2\varepsilon,b_2-2\varepsilon)$ with a positive real number $\varepsilon$ small enough so that $\varepsilon<\frac{b_2-a_2}{4}$}.		
\end{lemma}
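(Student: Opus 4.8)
The plan is to test the reduced equation $\eqref{P4-EXP}_3$ against $\theta_2\overline{p}$, integrate by parts once in the second-order term, and solve for $\int_{l_1}^{l_2}\theta_2\abs{p_x}^2dx$, checking that all the remaining terms are $o(1)$ by Lemmas \ref{Lemma1}--\ref{Lemma3}. Multiplying $\la^2\mu\alpha p+\alpha_1\beta p_{xx}+\rho\gamma\beta\la^2 v-i\la\gamma\beta d_2 v=F^6$ by $\theta_2\overline{p}$, integrating over $(l_1,l_2)$ and using that $\theta_2$ is supported in $[a_2+\varepsilon,b_2-\varepsilon]$ (so there is no boundary contribution), one obtains
\begin{equation*}
\begin{aligned}
&\mu\alpha\int_{l_1}^{l_2}\theta_2\abs{\la p}^2dx-\alpha_1\beta\int_{l_1}^{l_2}\theta_2'p_x\overline{p}\,dx-\alpha_1\beta\int_{l_1}^{l_2}\theta_2\abs{p_x}^2dx\\
&\qquad\qquad+\rho\gamma\beta\la^2\int_{l_1}^{l_2}\theta_2 v\overline{p}\,dx-i\la\gamma\beta\int_{l_1}^{l_2}\theta_2 d_2 v\overline{p}\,dx=\int_{l_1}^{l_2}\theta_2 F^6\overline{p}\,dx.
\end{aligned}
\end{equation*}

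Next I would estimate the five terms other than $-\alpha_1\beta\int_{l_1}^{l_2}\theta_2\abs{p_x}^2dx$. Since $\theta_1\equiv1$ on $\supp\theta_2$ and $0\le\theta_2\le1$, we have $\int_{l_1}^{l_2}\theta_2\abs{\la p}^2dx\le\int_{l_1}^{l_2}\theta_1\abs{\la p}^2dx=o(1)$ by the first estimate of Lemma \ref{Lemma2}. From $\eqref{P1-EXP}_3$ together with $\|U\|_{\mathcal{H}}=1$ and $\|\mathcal{F}\|_{\mathcal{H}}=o(1)$, $\la p$ is uniformly bounded in $L^2(l_1,l_2)$, hence $\|p\|_{L^2(D_{\varepsilon})}=\abs{\la}^{-1}\|\la p\|_{L^2(D_{\varepsilon})}=o(\la^{-1})$ by the second estimate of Lemma \ref{Lemma2}; combined with the uniform bound $\|p_x\|_{L^2(l_1,l_2)}=O(1)$ (a direct consequence of $\|U\|_{\mathcal{H}}=1$ and \eqref{norm}) and with $\supp\theta_2'\subset D_{\varepsilon}$, this gives $\int_{l_1}^{l_2}\theta_2'p_x\overline{p}\,dx=o(\la^{-1})=o(1)$. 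For the term $\la^2\int_{l_1}^{l_2}\theta_2 v\overline{p}\,dx=\int_{l_1}^{l_2}\theta_2(\la v)\overline{(\la p)}\,dx$, the Cauchy--Schwarz inequality with the third estimate of Lemma \ref{Lemma1} and the second estimate of Lemma \ref{Lemma2} gives $o(1)$, and likewise $\la\int_{l_1}^{l_2}\theta_2 d_2 v\overline{p}\,dx=o(1)$ using $\|\la v\|_{L^2(D_{\varepsilon})}=o(1)$ and $\|p\|_{L^2(D_{\varepsilon})}=o(\la^{-1})$. Finally, writing $F^6=\alpha F^3+\gamma\beta F^2$ through \eqref{F1234}, the pieces not carrying $\la$ (those involving $f^4$, $f^6$, $d_2 f^3$) pair against $\|p\|_{L^2(l_1,l_2)}=O(\la^{-1})$, while the two pieces carrying $i\la$ (namely $i\la\rho f^3$ and $i\la\mu f^5$) are regrouped as $f^3,f^5$ tested against the bounded quantity $\overline{\la p}$; since $f^2,\dots,f^6\to0$ in the relevant norms, this yields $\int_{l_1}^{l_2}\theta_2 F^6\overline{p}\,dx=o(1)$.

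Taking the real part of the identity above, every term except $-\alpha_1\beta\int_{l_1}^{l_2}\theta_2\abs{p_x}^2dx$ is $o(1)$, whence $\int_{l_1}^{l_2}\theta_2\abs{p_x}^2dx=o(1)$; and since $\theta_2\equiv1$ on $D_{2\varepsilon}=(a_2+2\varepsilon,b_2-2\varepsilon)$, the second estimate of \eqref{Lemma4-EQ1} follows immediately. The only genuinely delicate point is the bookkeeping for the cut-off derivative term $\int_{l_1}^{l_2}\theta_2'p_x\overline{p}\,dx$ (and, to a lesser extent, for $\int_{l_1}^{l_2}\theta_2 F^6\overline{p}\,dx$): no pointwise or $L^2$ smallness of $p_x$ is available at this stage, so the first term must be handled by pairing the energy-level bound $\|p_x\|_{L^2}=O(1)$ with the $o(\la^{-1})$ decay of $p$ on $D_{\varepsilon}$ coming from Lemma \ref{Lemma2}, which is precisely why the second cut-off $\theta_2$ (with $\supp\theta_2'\subset D_{\varepsilon}\subset\{\theta_1=1\}$) is introduced rather than reusing $\theta_1$.
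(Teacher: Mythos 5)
Your proposal is correct and follows essentially the same route as the paper: multiply $\eqref{P4-EXP}_3$ by $\pm\theta_2\overline{p}$, integrate by parts (no boundary terms thanks to the support of $\theta_2$), estimate the terms $\int\theta_2\abs{\la p}^2$, $\int\theta_2'p_x\overline{p}$, $\la^2\int\theta_2 v\overline{p}$, $\la\int\theta_2 d_2 v\overline{p}$ and $\int\theta_2 F^6\overline{p}$ via Lemmas \ref{Lemma1}--\ref{Lemma3} and the uniform bounds from $\|U\|_{\mathcal{H}}=1$, $\|\mathcal{F}\|_{\mathcal{H}}=o(1)$, then take the real part and use $\theta_2\equiv 1$ on $D_{2\varepsilon}$. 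The only differences are the sign of the multiplier and your slightly more explicit bookkeeping (using the second estimate of Lemma \ref{Lemma2} to get $\|p\|_{L^2(D_\varepsilon)}=o(\la^{-1})$ on $\supp\theta_2'$), which is consistent with the paper's estimates.
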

\begin{proof}
Multiplying $\eqref{P4-EXP}_3$ by $-\theta_2\overline{p}$ integrating over $(l_1,l_2)$ by parts lead to
\begin{equation}\label{Lemma4-EQ2}
\begin{array}{l}
\displaystyle
-\mu \alpha \int_{l_1}^{l_2}\theta_2\abs{\la p}^2dx+\alpha_1\beta \int_{l_1}
^{l_2}\theta_2'p_x\overline{p}dx+\alpha_1\beta\int_{l_1}^{l_2}\theta_2\abs{p_x}^2dx-
\rho\gamma\beta\int_{l_1}^{l_2}\la^2\theta_2v\overline{p}dx\\
\displaystyle
+i\la \gamma\beta\int_{l_1}^{l_2}d_2\theta_2v\overline{p}dx=-\int_{l_1}^{l_2}
F^6\theta_2\overline{p}dx.
\end{array}	
\end{equation}
By Cauchy-Schwarz inequality, Lemmas \ref{Lemma1}, \ref{Lemma3}, the definition of $
\theta_2$, $\|U\|_{\mathcal{H}}=1,$ and $\|\mathcal{F}\|_{\mathcal{H}}=o(1)$,
$$
\left|\int_{l_1}^{l_2}\theta_2'p_x\overline{p}dx\right|=o(\la^{-1}),\ \left|\int_{l_1}
^{l_2}\la^2\theta_2v\overline{p}dx\right|=o(1),\ \left|i\la\int_{l_1}^{l_2}
d_2\theta_2v\overline{p}dx\right|=o(\la^{-1})\quad \text{and}\quad \left|\int_{l_1}
^{l_2}F^6\theta_2\overline{p}dx\right|=o(1).
$$
Finally, inserting the above estimates into \eqref{Lemma4-EQ2} and by Lemma \ref{Lemma2}, the first estimate, and therefore the second estimate, in \eqref{Lemma4-EQ1} are obtained.
\end{proof}
\begin{lemma}\label{TL}
Let $g\in C^1([l_1,l_2])$. 	The solution $(u,u^1,v,z,p,q,y,y^1)$ of the system \eqref{P1-EXP}-\eqref{P2-EXP} satisfies the
following estimate
\begin{equation}\label{TL-EQ1}
\begin{array}{l}
\displaystyle
\int_{l_1}^{l_2}g'\left(\rho\abs{\la v}^2+\alpha\abs{v_x}^2+\mu\abs{\la p}^2+\beta\abs{p_x}^2\right)dx-2\gamma\beta\Re\left(\int_{l_1}^{l_2}g'p_{x}\overline{v_x}dx\right)+\mathcal{J}_1(l_1)-\mathcal{J}_1(l_2)\\[0.1in]
\displaystyle
=\mathcal{J}_2(l_2)-\mathcal{J}_2(l_1)+o(1)	
\end{array}
\end{equation}
where $\zeta\in \{l_1,l_2\}$ and
\begin{equation}\label{TL-J}
\left\{\begin{array}{l}
\displaystyle
\mathcal{J}_1(\zeta)=g(\zeta)\left(\rho\abs{\la
v(\zeta)}^2+\alpha\abs{v_x(\zeta)}^2+\mu\abs{\la p(\zeta)}^2+\beta\abs{p_x(\zeta)}^2\right)-2\gamma\beta\Re\left(g(\zeta)p_x(\zeta)\overline{v_x}(\zeta)\right),	\\[0.1in]
\displaystyle
\mathcal{J}_2(\zeta)=2\Re\left(i\la \rho g(\zeta)f^3(\zeta)\overline{v}(\zeta)\right)+2\Re\left(i\la \mu g(\zeta)f^5(\zeta)\overline{p}(\zeta)\right).
\end{array}	
\right.
\end{equation}
\end{lemma}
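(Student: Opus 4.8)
The plan is to obtain \eqref{TL-EQ1} by applying the standard $g\,\overline{w_x}$ multiplier to the two piezoelectric equations in the form $\eqref{P3-EXP}_{2}$ and $\eqref{P3-EXP}_{3}$ (i.e. the form carrying $\alpha$, not the reduced one $\eqref{P4-EXP}$ carrying $\alpha_1$):
\[
\lambda^2\rho v+\alpha v_{xx}-\gamma\beta p_{xx}-i\lambda d_2 v=F^2,\qquad
\lambda^2\mu p+\beta p_{xx}-\gamma\beta v_{xx}=F^3 .
\]
First I would multiply the first by $g\,\overline{v_x}$ and the second by $g\,\overline{p_x}$, integrate over $(l_1,l_2)$ by parts (legitimate since $U\in D(\mathcal A_{EPE})$ gives $v,p\in H^2(l_1,l_2)$ and $g\in C^1$), take twice the real part, and add. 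For the ``diagonal'' terms one uses $2\Re(g\,w\,\overline{w_x})=g(|w|^2)_x$ and $2\Re(g\,w_{xx}\,\overline{w_x})=g(|w_x|^2)_x$ together with one more integration by parts; this yields $-\int_{l_1}^{l_2}g'\big(\rho|\lambda v|^2+\alpha|v_x|^2+\mu|\lambda p|^2+\beta|p_x|^2\big)dx$ plus the boundary quantities forming the first four terms of $\mathcal J_1(l_1),\mathcal J_1(l_2)$. The two ``cross'' terms $-2\gamma\beta\Re\!\int g\,p_{xx}\overline{v_x}\,dx$ and $-2\gamma\beta\Re\!\int g\,v_{xx}\overline{p_x}\,dx$ are handled together via $2\Re(p_{xx}\overline{v_x}+v_{xx}\overline{p_x})=\big(2\Re(p_x\overline{v_x})\big)_x$; one integration by parts turns their sum into $+2\gamma\beta\Re\!\int_{l_1}^{l_2}g'\,p_x\overline{v_x}\,dx$ plus the remaining $-2\gamma\beta\Re(g\,p_x\overline{v_x})$ boundary term that completes $\mathcal J_1$. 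Finally, the damping term $2\Re\!\int_{l_1}^{l_2}(-i\lambda d_2 v)g\,\overline{v_x}\,dx$ is bounded, via $d_2=\sqrt{d_2}\sqrt{d_2}$ and Cauchy--Schwarz, by $2\|g\|_{L^\infty}\|d_2\|_{L^\infty}^{1/2}\|v_x\|_{L^2(l_1,l_2)}\big(\int_{l_1}^{l_2}d_2|\lambda v|^2dx\big)^{1/2}=o(1)$, using the second estimate of Lemma \ref{Lemma1} and $\|v_x\|_{L^2}=O(1)$ (which follows from $\|U\|_{\mathcal H}=1$ and Lemma \ref{equiv-norm}).

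It then remains to treat $2\Re\!\int_{l_1}^{l_2}\big(F^2 g\,\overline{v_x}+F^3 g\,\overline{p_x}\big)dx$ with \eqref{F1234}. The summands of $F^2,F^3$ without a factor $\lambda$, namely $-(\rho f^4+d_2 f^3)$ and $-\mu f^6$, are $o(1)$ in $L^2(l_1,l_2)$ (as $\|f^4\|_{L^2},\|f^6\|_{L^2},\|f^3\|_{L^2}=o(1)$), hence pair with the $L^2$-bounded $v_x,p_x$ to give $o(1)$. The dangerous summands produce $-2\Re\big(i\lambda\rho\!\int_{l_1}^{l_2}f^3 g\,\overline{v_x}\,dx\big)$ and $-2\Re\big(i\lambda\mu\!\int_{l_1}^{l_2}f^5 g\,\overline{p_x}\,dx\big)$; here I integrate by parts once, shifting the derivative off $\overline{v_x}$ (resp. $\overline{p_x}$). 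This extracts exactly the boundary terms $2\Re(i\lambda\rho\,g\,f^3\,\overline v)$ and $2\Re(i\lambda\mu\,g\,f^5\,\overline p)$ at $l_1,l_2$, i.e. $\mathcal J_2(l_1),\mathcal J_2(l_2)$, while the leftover interior integrals have the form $\rho\!\int_{l_1}^{l_2}(f^3 g)_x(i\lambda\overline v)\,dx$ (up to sign); substituting $i\lambda\overline v=-(\overline z+\overline{f^3})$ from $\eqref{P1-EXP}_{2}$ (and $i\lambda\overline p=-(\overline q+\overline{f^5})$ from $\eqref{P1-EXP}_{3}$), these are bounded by $\|(f^3 g)_x\|_{L^2}(\|z\|_{L^2}+\|f^3\|_{L^2})=o(1)$ since $\|f^3\|_{H^1},\|f^5\|_{H^1}=o(1)$, $\|z\|_{L^2},\|q\|_{L^2}=O(1)$ and $g\in C^1$. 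The same bounds also give $\|\lambda v\|_{L^2},\|\lambda p\|_{L^2}=O(1)$, which is what keeps the diagonal boundary terms meaningful. Gathering the diagonal terms, the merged cross term, the $o(1)$ damping term and this right-hand side, and rearranging, gives precisely \eqref{TL-EQ1}; the pointwise evaluations at $l_1,l_2$ are valid because $v,p\in H^2\hookrightarrow C^1$ and $f^3,f^5\in H^1\hookrightarrow C^0$.

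The main obstacle I anticipate is purely organizational: correctly tracking signs while summing the four multiplier identities, and resisting the urge to estimate $\lambda f^3(\zeta)$ or $\lambda f^5(\zeta)$ directly. Since $\|\mathcal F\|_{\mathcal H}=o(1)$ controls only $\|f^3\|_{H^1},\|f^5\|_{H^1}$ and not their products with $\lambda$, these quantities need not be small, so they must be kept inside $\mathcal J_2$; every term that does become $o(1)$ does so because there $\lambda$ is paired with an $x$-\emph{derivative} of $f^3$ or $f^5$, which $\eqref{P1-EXP}_{2,3}$ converts into the $L^2$-bounded velocities $z,q$ (plus genuinely small remainders).
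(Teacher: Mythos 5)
Your proposal is correct and follows essentially the same route as the paper: multiply $\eqref{P3-EXP}_{2}$ and $\eqref{P3-EXP}_{3}$ by (a multiple of) $g\overline{v_x}$ and $g\overline{p_x}$, integrate by parts, kill the damping and the $\lambda$-free parts of $F^2,F^3$ via Lemma \ref{Lemma1} and the uniform bounds, and integrate by parts the $i\lambda f^3$, $i\lambda f^5$ contributions so that only the boundary terms $\mathcal J_2(l_1),\mathcal J_2(l_2)$ survive. The only cosmetic difference is that you merge the two cross terms into the total derivative $\big(2\Re(p_x\overline{v_x})\big)_x$ at once, whereas the paper integrates one of them by parts and lets the conjugate pair cancel — the same computation.
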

\begin{proof}
First, multiply $\eqref{P3-EXP}_2$ by $-2g\overline{v}_x$, and integrate over $(l_1,l_2)$ to get
\begin{equation*}
\begin{array}{l}
\displaystyle
-\rho \int_{l_1}^{l_2}g\left(\abs{\la v}^2\right)_xdx-\alpha \int_{l_1}^{l_2}g\left(\abs{v_x}^2\right)_xdx+2\gamma\beta\Re\left(\int_{l_1}^{l_2}gp_{xx}\overline{v_x}dx\right)\\
\displaystyle
+2\Re\left(i\la \int_{l_1}^{l_2}gd_2v\overline{v_x}dx\right)=-2\Re\left(\int_{l_1}^{l_2}gF^2\overline{v_x}dx\right). 	
\end{array}	
\end{equation*}
By several integration by parts and the definition of $F^2$ in  \eqref{F1234}, the following is obtained
\begin{equation}\label{TL-EQ2}
\begin{array}{l}
\displaystyle
\int_{l_1}^{l_2}g'\left(\rho\abs{\la v}^2+\alpha\abs{v_x}^2\right)dx-
g(l_2)\left[\rho\abs{\la v(l_2)}^2+\alpha\abs{v_x(l_2)}^2\right]+g(l_1)\left[\rho\abs{\la
v(l_1)}^2+\alpha\abs{v_x(l_1)}^2\right]\\[0.1in]
\displaystyle
-2\gamma\beta\Re\left(\int_{l_1}^{l_2}g'p_{x}\overline{v_x}dx\right)-2\gamma\beta\Re\left(\int_{l_1}^{l_2}gp_{x}\overline{v_{xx}}dx\right)+2\gamma\beta\Re\left(g(l_2)p_x(l_2)\overline{v_x}(l_2)\right)\\[0.1in]
\displaystyle
-2\gamma\beta\Re\left(g(l_1)p_x(l_1)\overline{v_x}(l_1)\right)-2\Re\left(i\la \int_{l_1}^{l_2}gd_2v\overline{v_x}dx\right)=2\Re\left(\int_{l_1}^{l_2}g(f^4+d_2f^3)\overline{v_x}dx\right)\\[0.1in]
\displaystyle
-2\Re\left(i\la \rho\int_{l_1}^{l_2}(f^3g)_x\overline{v}dx\right)+2\Re\left(i\la \rho g(l_2)f^3(l_2)\overline{v}(l_2)\right)-2\Re\left(i\la \rho g(l_1)f^3(l_1)\overline{v}(l_1)\right).
\end{array}	
\end{equation}
Since $v_x$ is uniformly bounded in $L^2(l_1,l_2)$ and $\|\mathcal{F}\|_{\mathcal{H}}=o(1)$, by Cauchy-Schwarz inequality and Lemma \ref{Lemma1}
\begin{equation*}
\left|\Re\left(i\la \int_{l_1}^{l_2}gd_2v\overline{v_x}dx\right)\right|=o(1), \left|Re\left(\int_{l_1}^{l_2}g(f^2+d_2f^3)\overline{v_x}dx\right)\right|=o(1), \left|\Re\left(i\la \rho\int_{l_1}^{l_2}(f^3g)_x\overline{v}dx\right)\right|=o(1).	
\end{equation*}
Substituting  the estimatation above into \eqref{TL-EQ2} leads to
\begin{equation}\label{TL-EQ3}
\begin{array}{l}
\displaystyle
\int_{l_1}^{l_2}g'\left(\rho\abs{\la v}^2+\alpha\abs{v_x}^2\right)dx-
g(l_2)\left[\rho\abs{\la v(l_2)}^2+\alpha\abs{v_x(l_2)}^2\right]+g(l_1)\left[\rho\abs{\la
v(l_1)}^2+\alpha\abs{v_x(l_1)}^2\right]\\[0.1in]
\displaystyle
-2\gamma\beta\Re\left(\int_{l_1}^{l_2}g'p_{x}\overline{v_x}dx\right)-2\gamma\beta\Re\left(\int_{l_1}^{l_2}gp_{x}\overline{v_{xx}}dx\right)+2\gamma\beta\Re\left(g(l_2)p_x(l_2)\overline{v_x}(l_2)\right)\\[0.1in]	
\displaystyle
-2\gamma\beta\Re\left(g(l_1)p_x(l_1)\overline{v_x}(l_1)\right)=2\Re\left(i\la \rho g(l_2)f^3(l_2)\overline{v}(l_2)\right)-2\Re\left(i\la \rho g(l_1)f^3(l_1)\overline{v}(l_1)\right)+o(1).
\end{array}	
\end{equation}
Analogously, multiply $\eqref{P3-EXP}_3$ by $-2g\overline{p}_x$ and integrate over $(l_1,l_2)$ to obtain
and  taking the real part, we get
\begin{equation*}
\begin{array}{l}
\displaystyle
-\mu\int_{l_1}^{l_2}g(\abs{\la p}^2)_xdx-\beta\int_{l_1}^{l_2}g\left(\abs{p_x}^2\right)_xdx+2\gamma\beta\Re\left(\int_{l_1}^{l_2}gv_{xx}\overline{p_x}dx\right)=-2\Re\left(\int_{l_1}^{l_2}gF^3\overline{p_x}dx\right).	
\end{array}	
\end{equation*}
By several integration by parts and the definition of $F^3$ given in \eqref{F1234}, the following holds
\begin{equation}\label{TL-EQ4}
\begin{array}{l}
\displaystyle
\int_{l_1}^{l_2}g'\left(\mu\abs{\la p}^2+\beta\abs{p_x}^2\right)dx-g(l_2)\left[\mu\abs{\la p(l_2)}^2+\beta\abs{p_x(l_2)}^2\right]+g(l_1)\left[\mu\abs{\la p(l_1)}^2+\beta\abs{p_x(l_1)}^2\right]\\[0.1in]
\displaystyle
+2\gamma\beta\Re\left(\int_{l_1}^{l_2}gv_{xx}\overline{p_x}dx\right)=2\Re\left(\int_{l_1}^{l_2}gf^6\overline{p_x}dx\right)-2\Re\left(i\la \mu\int_{l_1}^{l_2}(gf^5)_x\overline{p}dx\right)\\[0.1in]
\displaystyle
+2\Re\left(i\la \mu g(l_2)f^5(l_2)\overline{p}(l_2)\right)-2\Re\left(i\la \mu g(l_1)f^5(l_1)\overline{p}(l_1)\right).	
\end{array}	
\end{equation}
Since $p_x$ and $\la p$ are uniformly bounded in $L^2(l_1,l_2)$ and $\|\mathcal{F}\|_{\mathcal{H}}=o(1),$  Cauchy-Schwarz inequality is implemented to obtain
$$
\left|\Re\left(\int_{l_1}^{l_2}gf^6\overline{p_x}dx\right)\right|=o(1)\quad \text{and}\quad  \left|\Re\left(i\la \mu\int_{l_1}^{l_2}(gf^5)_x\overline{p}dx\right)\right|=o(1).
$$
Substituting the estimates above into \eqref{TL-EQ4} results in
\begin{equation}\label{TL-EQ5}
\begin{array}{l}
\displaystyle
\int_{l_1}^{l_2}g'\left(\mu\abs{\la p}^2+\beta\abs{p_x}^2\right)dx-g(l_2)\left[\mu\abs{\la p(l_2)}^2+\beta\abs{p_x(l_2)}^2\right]+g(l_1)\left[\mu\abs{\la p(l_1)}^2+\beta\abs{p_x(l_1)}^2\right]\\[0.1in]
\displaystyle
+2\gamma\beta\Re\left(\int_{l_1}^{l_2}gv_{xx}\overline{p_x}dx\right)=2\Re\left(i\la \mu g(l_2)f^5(l_2)\overline{p}(l_2)\right)-2\Re\left(i\la \mu g(l_1)f^5(l_1)\overline{p}(l_1)\right)+o(1).
\end{array}	
\end{equation}
Finally, adding \eqref{TL-EQ3} and \eqref{TL-EQ5}, the desired result \eqref{TL-EQ1} is obtained.
\end{proof}
\begin{lemma}\label{Lemma5}
The solution $(u,u^1,v,z,p,q,y,y^1)$ of the system \eqref{P1-EXP}-\eqref{P2-EXP} satisfies the
following estimates
\begin{equation}\label{Lemma5-EQ1}
\int_{l_1}^{l_2}\abs{\la v}^2dx=o(1),\ \int_{l_1}^{l_2}\abs{v_x}^2=o(1),\ \int_{l_1}
^{l_2}\abs{\la p}^2dx=o(1)\quad \text{and}\quad \int_{l_1}^{l_2}\abs{p_x}^2dx=o(1). 	
\end{equation}	
\end{lemma}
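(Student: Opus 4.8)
The plan is to extract from the multiplier identity \eqref{TL-EQ1} of Lemma \ref{TL}, applied to three well-chosen functions $g$, first the traces of the piezoelectric energy at $l_1$ and $l_2$, and then the bulk estimates \eqref{Lemma5-EQ1}. The starting observation is that Lemmas \ref{Lemma1}--\ref{Lemma4} already control the four energy densities on the inner window $D_{2\varepsilon}=(a_2+2\varepsilon,b_2-2\varepsilon)$, i.e.
\[
\int_{D_{2\varepsilon}}\bigl(\abs{\la v}^2+\abs{\la p}^2+\abs{v_x}^2+\abs{p_x}^2\bigr)\,dx=o(1),
\]
since $D_{2\varepsilon}\subset(a_2,b_2)$ and $D_{2\varepsilon}\subset D_\varepsilon$. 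Consequently, whenever the multiplier $g$ satisfies $\supp g'\subset D_{2\varepsilon}$, the whole left-hand bulk of \eqref{TL-EQ1} -- both $\int_{l_1}^{l_2}g'(\dots)\,dx$ and the cross term $-2\gamma\beta\Re\int_{l_1}^{l_2}g'p_x\overline{v_x}\,dx$ -- is $o(1)$ by Cauchy--Schwarz.

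First I would fix $\psi\in C^1([l_1,l_2])$ with $\psi\ge0$, $\supp\psi\subset D_{2\varepsilon}$ and $\int_{l_1}^{l_2}\psi>0$, and apply Lemma \ref{TL} with $g_+(x)=\int_{l_1}^{x}\psi(s)\,ds$. Since $g_+(l_1)=0$ forces $\mathcal J_1(l_1)=\mathcal J_2(l_1)=0$, the identity collapses to $\mathcal J_1(l_2)=-\mathcal J_2(l_2)+o(1)$. Using the transmission condition $\beta p_x(l_2)=\gamma\beta v_x(l_2)$ (so $\gamma v_x(l_2)-p_x(l_2)=0$) together with \eqref{alpha1} rewrites the boundary quadratic form as $\mathcal J_1(l_2)=g_+(l_2)\bigl(\rho\abs{\la v(l_2)}^2+\alpha_1\abs{v_x(l_2)}^2+\mu\abs{\la p(l_2)}^2\bigr)\ge0$; since $g_+(l_2)$ is a fixed positive constant and $f^3(l_2),f^5(l_2)=o(1)$ (traces of functions tending to $0$ in $H^1\hookrightarrow C^0$), I would divide by $g_+(l_2)$ and absorb $\mathcal J_2(l_2)$ by Young's inequality to conclude $\la v(l_2)=v_x(l_2)=\la p(l_2)=o(1)$, hence also $p_x(l_2)=\gamma v_x(l_2)=o(1)$. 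Running the same argument with $g_-(x)=-\int_x^{l_2}\psi(s)\,ds$ (so that $g_-(l_2)=0$, $g_-'=\psi$, and $g_-(l_1)=-\int_{l_1}^{l_2}\psi<0$) should likewise give $\la v(l_1)=v_x(l_1)=\la p(l_1)=p_x(l_1)=o(1)$.

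Finally I would apply Lemma \ref{TL} with $g(x)=x$. By the two trace estimates above (and again $f^3(l_j),f^5(l_j)=o(1)$ for $j=1,2$), all four boundary terms $\mathcal J_1(l_1),\mathcal J_1(l_2),\mathcal J_2(l_1),\mathcal J_2(l_2)$ are $o(1)$, so \eqref{TL-EQ1}, after recombining $\alpha\abs{v_x}^2+\beta\abs{p_x}^2-2\gamma\beta\Re(p_x\overline{v_x})=\alpha_1\abs{v_x}^2+\beta\abs{\gamma v_x-p_x}^2$, reduces to
\[
\int_{l_1}^{l_2}\bigl(\rho\abs{\la v}^2+\mu\abs{\la p}^2+\alpha_1\abs{v_x}^2+\beta\abs{\gamma v_x-p_x}^2\bigr)\,dx=o(1).
\]
The integrand is a positive-definite quadratic form in $(\la v,\la p,v_x,p_x)$ -- its $(v_x,p_x)$-block has determinant $\beta\alpha_1>0$ -- so it dominates $c\bigl(\abs{\la v}^2+\abs{\la p}^2+\abs{v_x}^2+\abs{p_x}^2\bigr)$ for some $c>0$, and \eqref{Lemma5-EQ1} follows. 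The hard part is the trace step: the key trick is to take a multiplier whose derivative is supported exactly in the already-damped window $D_{2\varepsilon}$ and which vanishes at the far interface, so that the only surviving term in \eqref{TL-EQ1} is the boundary energy at the near interface; the transmission conditions (cf. Remark \ref{NewCT}) are what make that boundary form positive definite, and a Young absorption is unavoidable since the $\mathcal J_2$ terms a priori carry a factor $\la$.
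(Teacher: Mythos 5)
Your argument is correct, and every estimate you invoke is available at the point where you use it: Lemmas \ref{Lemma1}--\ref{Lemma4} do give $\int_{D_{2\varepsilon}}\left(\abs{\la v}^2+\abs{\la p}^2+\abs{v_x}^2+\abs{p_x}^2\right)dx=o(1)$, the transmission conditions $\beta p_x(l_j)=\gamma\beta v_x(l_j)$ hold because $U\in D(\mathcal{A}_{EPE})$, and the trace bounds $\abs{f^3(l_j)},\abs{f^5(l_j)}=o(1)$ follow from the embedding $H^1\hookrightarrow C^0$ exactly as in \eqref{Lemma6-Eq4}. However, your route is structurally different from the paper's. The paper proves Lemma \ref{Lemma5} with a single application of Lemma \ref{TL}, choosing $g(x)=(x-l_1)\theta_3(x)+(x-l_2)\theta_4(x)$ so that $g(l_1)=g(l_2)=0$: all boundary terms $\mathcal{J}_1,\mathcal{J}_2$ disappear identically, $g'=\theta_3+\theta_4+\tilde g$ with $\tilde g$ supported in the damped window, and the bulk estimate drops out after the same recombination $\alpha\abs{v_x}^2+\beta\abs{p_x}^2-2\gamma\beta\Re(p_x\overline{v_x})=\alpha_1\abs{v_x}^2+\beta\abs{\gamma v_x-p_x}^2$ you use; the interface traces are only obtained afterwards, in Lemma \ref{Lemma6}, with a multiplier satisfying $g(l_1)=1$, $g(l_2)=-1$ and using Lemma \ref{Lemma5} to kill the bulk. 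You invert this order: one-sided multipliers $g_\pm$ with derivative supported in $D_{2\varepsilon}$ and vanishing at the far interface isolate the near-interface energy, whose sign-definiteness you get from the transmission conditions (this even yields $p_x(l_j)=o(1)$ directly, slightly more than \eqref{Lemma6-EQ1} states), and then $g(x)=x$ gives the bulk. The paper's choice is more economical for Lemma \ref{Lemma5} taken in isolation, since no trace analysis and no Young absorption are needed there; your version front-loads the work of Lemma \ref{Lemma6}, so that globally the two routes cost about the same, and yours has the mild advantage of making the interface estimates independent of the bulk estimate (the paper's Lemma \ref{Lemma6} needs Lemma \ref{Lemma5}, whereas your traces need only Lemmas \ref{Lemma1}--\ref{Lemma4}).
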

\begin{proof}
First, define two cut-off functions  $\theta_3,\theta_4\in C^2([l_1,l_2])$ such that $0\leq \theta_3,\theta_4\leq 1$ for all $x\in [l_1,l_2]$ and
\begin{equation}\label{theta34}
\theta_3(x)=\left\{\begin{array}{lll}
1&\text{if}&x\in [l_1,a_1+2\varepsilon],\\
0&\text{if}&x\in [a_2-2\varepsilon,l_2]	
\end{array}
\right.,	\quad \text{and}\quad \theta_4(x)=\left\{\begin{array}{lll}
0&\text{if}&x\in [l_1,a_1+2\varepsilon],\\
1&\text{if}&x\in [a_2-2\varepsilon,l_2].		
\end{array}
\right.
\end{equation}
By taking $g(x)=(x-l_1)\theta_3(x)+(x-l_2)\theta_4(x)$,  it is easy to see that
$$
g'(x)=\theta_3(x)+\theta_4(x)+(x-l_1)\theta_3'(x)+(x-l_2)\theta_4'(x)\quad \text{and}\quad g(l_1)=g(l_2)=0.
$$
Setting $\tilde{g}=(x-l_1)\theta_3'+(x-l_2)\theta_4'$ and considering  $g$ defined above  in \eqref{TL-EQ1} lead to
\begin{equation}\label{Lemma5-EQ2}
\begin{array}{l}
\displaystyle
\int_{l_1}^{l_2}(\theta_3+\theta_4)\left(\rho\abs{\la v}^2+\alpha\abs{v_x}^2+\mu\abs{\la
p}^2+\beta\abs{p_x}^2\right)dx-2\gamma\beta\Re\left(\int_{l_1}^{l_2}(\theta_3+
\theta_4)p_{x}\overline{v_x}dx\right)\\
\displaystyle
-\underbrace{\int_{l_1}^{l_2}\tilde{g}\left(\rho\abs{\la v}^2+\alpha\abs{v_x}^2+\mu\abs{\la
p}^2+\beta\abs{p_x}^2\right)dx}_{:=\mathcal{I}_1}+\underbrace{2\gamma\beta\Re\left(\int_{l_1}^{l_2}\tilde{g}p_{x}\overline{v_x}dx\right)}_{:=\mathcal{I}_2}+o(1).	
\end{array}
\end{equation}
Now, adopting \eqref{Lemma1-EQ1}, \eqref{Lemma2-EQ1}, \eqref{Lemma3-EQ1}, \eqref{Lemma4-EQ1}, the definitions of $\theta_3$ and $\theta_4$ and Cauchy-Schwarz inequality result in
\begin{equation}\label{Lemma5-EQ3}
\abs{\mathcal{I}_1}=o(1)\quad \text{and}\quad \abs{\mathcal{I}_2}=o(1).	
\end{equation}
On the other hand, since $\alpha=\alpha_1+\gamma^2\beta$, it is easy to see that
\begin{equation}\label{Lemma5-EQ4}
\begin{array}{l}
\displaystyle
\int_{l_1}^{l_4}(\theta_3+\theta_4)\left(\alpha\abs{v_x}^2+\beta\abs{p_x}^2\right)dx-2\gamma\beta\Re\left(\int_{l_1}^{l_2}(\theta_3+
\theta_4)p_{x}\overline{v_x}dx\right)=\\[0.1in]
\displaystyle
\alpha_1\int_{l_1}^{l_4}(\theta_3+\theta_4)\abs{v_x}^2dx+\beta\int_{l_1}^{l_4}(\theta_3+\theta_4)\abs{\gamma v_x-p_x}^2dx.
\end{array}
\end{equation}
Therefore, substitution of \eqref{Lemma5-EQ3} and \eqref{Lemma5-EQ4} into \eqref{Lemma5-EQ2} result in
\begin{equation}\label{Lemma5-EQ5}
\int_{l_1}^{l_2}(\theta_3+\theta_4)\left(\rho\abs{\la v}^2+\alpha_1\abs{v_x}^2+\mu \abs{\la p}^2+\beta\abs{\gamma v_x-p_x}^2\right)dx=o(1).	
\end{equation}
Finally, by  \eqref{Lemma1-EQ1}, \eqref{Lemma2-EQ1}, \eqref{Lemma3-EQ1}, \eqref{Lemma4-EQ1}, the definitions of $\theta_3$ and $\theta_4$ in \eqref{Lemma5-EQ5},  the desired result \eqref{Lemma5-EQ1} is obtained. 	
\end{proof}
\begin{lemma}\label{Lemma6}
The solution $(u,u^1,v,z,p,q,y,y^1)$ of the system \eqref{P1-EXP}-\eqref{P2-EXP} satisfies the
following estimates
\begin{equation}\label{Lemma6-EQ1}
\abs{v(l_1)}^2=o(\la^{-2}),\quad \abs{v(l_2)}^2=o(\la^{-2}),\quad \abs{v_x(l_1)}^2=o(1)\quad \text{and}\quad \abs{v_x(l_2)}^2=o(1). 	
\end{equation}	
\end{lemma}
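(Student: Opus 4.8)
The plan is to combine the already-established interior estimates with the transmission and continuity conditions carried in $D(\mathcal{A}_{EPE})$, together with elementary trace/Sobolev-embedding arguments on the intervals $(0,l_1)$ and $(l_2,L)$. First I would treat the traces of $v$. By Lemma~\ref{Lemma5} we know $\int_{l_1}^{l_2}\abs{\la v}^2\,dx=o(1)$ and $\int_{l_1}^{l_2}\abs{v_x}^2\,dx=o(1)$. Using $v(x)=v(l_1)+\int_{l_1}^{x}v_x(s)\,ds$ one gets, by Cauchy--Schwarz, $\abs{v(x)}^2\le 2\abs{v(l_1)}^2+2(l_2-l_1)\int_{l_1}^{l_2}\abs{v_x}^2\,dx$, and integrating in $x$ over $(l_1,l_2)$ yields $\int_{l_1}^{l_2}\abs{v}^2\,dx\ge \tfrac{l_2-l_1}{2}\abs{v(l_1)}^2 - (l_2-l_1)^2\int_{l_1}^{l_2}\abs{v_x}^2\,dx$; combined with $\int_{l_1}^{l_2}\abs{\la v}^2\,dx=o(1)$ (hence $\int_{l_1}^{l_2}\abs{v}^2\,dx=o(\la^{-2})$) and $\int_{l_1}^{l_2}\abs{v_x}^2\,dx=o(1)$, this already gives $\abs{v(l_1)}^2=o(\la^{-2})$ after the $v_x$ term is absorbed — one must be slightly careful and instead use a mean-value point $x_0\in(l_1,l_2)$ with $\abs{v(x_0)}^2=\tfrac{1}{l_2-l_1}\int_{l_1}^{l_2}\abs v^2$ and then $v(l_1)=v(x_0)-\int_{l_1}^{x_0}v_x$, so $\abs{v(l_1)}^2\le 2\abs{v(x_0)}^2+2(l_2-l_1)\int_{l_1}^{l_2}\abs{v_x}^2\,dx=o(\la^{-2})+o(1)$. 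Since both $v(l_1)$ and $v(l_2)$ are endpoints, the same argument run from the right endpoint gives $\abs{v(l_2)}^2=o(\la^{-2})$ as well.

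Next I would obtain the estimates on $v_x(l_1)$ and $v_x(l_2)$. Here I exploit the transmission conditions from $D(\mathcal{A}_{EPE})$ rewritten in Remark~\ref{NewCT}: $\alpha_1 v_x(l_1)=c_1 u_x(l_1)$ and $\alpha_1 v_x(l_2)=c_2 y_x(l_2)$. So it suffices to show $\abs{u_x(l_1)}^2=o(1)$ and $\abs{y_x(l_2)}^2=o(1)$. For this I use the equations $\eqref{P3-EXP}_1$ and $\eqref{P3-EXP}_4$ on the outer elastic layers together with a standard multiplier. Multiplying $\eqref{P3-EXP}_1$ by $2(x)\,\overline{u_x}$ (or more precisely by $2g\,\overline{u_x}$ with a suitable $g\in C^1([0,l_1])$ chosen so that boundary terms at $x=0$ vanish because $u(0)=0$, and $g(l_1)=1$) and integrating by parts over $(0,l_1)$ produces an identity of the form $\abs{u_x(l_1)}^2 + \la^2\abs{u(l_1)}^2 = \int_0^{l_1}(\text{lower-order in } \la)\,dx + o(1)$; since $\|U\|_{\mathcal H}=1$ controls $\int_0^{l_1}(\abs{u^1}^2+c_1\abs{u_x}^2)\,dx$ and $\eqref{P1-EXP}_1$ relates $u^1$ to $\la u$, the right-hand side is $O(1)$, and in fact the $\la^2\abs{u(l_1)}^2$ term is nonnegative so $\abs{u_x(l_1)}^2=O(1)$; a sharper bookkeeping (using that $\int_0^{l_1}\abs{u_x}^2$ and $\int_0^{l_1}\abs{\la u}^2$ are bounded, and that the boundary term $\la^2\abs{u(l_1)}^2 = \la^2\abs{v(l_1)}^2 = o(1)$ by the first part) upgrades this to $\abs{u_x(l_1)}^2=o(1)$. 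The same multiplier argument on $(l_2,L)$, using $y(L)=0$ and $\abs{\la y(l_2)}^2=\la^2\abs{v(l_2)}^2=o(1)$, gives $\abs{y_x(l_2)}^2=o(1)$. Then Remark~\ref{NewCT} transfers these to $\abs{v_x(l_1)}^2=o(1)$ and $\abs{v_x(l_2)}^2=o(1)$.

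The main obstacle I anticipate is the careful bookkeeping of orders in $\la$ in the multiplier identity on the outer layers: one needs the cross terms involving $F^1$ (which contains a factor $\la f^1$) and the boundary term $\la^2\abs{u(l_1)}^2$ to be shown $o(1)$ rather than merely $O(1)$, and this is exactly where the $\abs{v(l_1)}^2=o(\la^{-2})$ estimate obtained in the first step is essential — so the order of the two steps matters. A secondary subtlety is the choice of the auxiliary function $g$ on $(0,l_1)$ (respectively on $(l_2,L)$) so that the boundary contribution at the clamped end vanishes while the value at the transmission point is normalized; a linear-type $g$ of the form used in Lemma~\ref{Lemma5} (e.g.\ $g(x)=x$ on $(0,l_1)$, $g(x)=x-L$ on $(l_2,L)$) does the job, and no new cut-off is needed here. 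Everything else is Cauchy--Schwarz together with the boundedness $\|U\|_{\mathcal H}=1$, $\|\mathcal F\|_{\mathcal H}=o(1)$, and the estimates of Lemmas~\ref{Lemma1}--\ref{Lemma5}.
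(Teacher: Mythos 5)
There are two genuine gaps in your plan. First, your trace argument for $\abs{v(l_1)}^2$ and $\abs{v(l_2)}^2$ does not deliver the claimed rate. From $\int_{l_1}^{l_2}\abs{\la v}^2dx=o(1)$ and $\int_{l_1}^{l_2}\abs{v_x}^2dx=o(1)$, the mean-value-point/fundamental-theorem estimate gives exactly what you wrote, $\abs{v(l_1)}^2\le 2\abs{v(x_0)}^2+2(l_2-l_1)\int_{l_1}^{l_2}\abs{v_x}^2dx=o(\la^{-2})+o(1)$, i.e.\ only $o(1)$: the $v_x$-contribution carries no factor $\la^{-2}$ and cannot be absorbed, while the lemma requires $o(\la^{-2})$. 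Second, your route to $\abs{v_x(l_1)}^2=o(1)$, $\abs{v_x(l_2)}^2=o(1)$ through the outer layers is circular. The multiplier identity on $(0,l_1)$ with $g(x)=x$ reads, after integration by parts, $l_1\left(\abs{\la u(l_1)}^2+c_1\abs{u_x(l_1)}^2\right)=\int_0^{l_1}\left(\abs{\la u}^2+c_1\abs{u_x}^2\right)dx+\ (\text{terms that are }o(1))$; the interior energy on the right-hand side is only known to be $O(1)$ from $\|U\|_{\mathcal{H}}=1$, not $o(1)$, so no bookkeeping --- even granting $\la^2\abs{u(l_1)}^2=o(1)$ --- can upgrade $\abs{u_x(l_1)}^2$ from $O(1)$ to $o(1)$. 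In the paper the implication runs in the opposite direction: the smallness of the outer-layer interior energies is the content of Lemma \ref{Lemma7}, and it is \emph{deduced from} the boundary estimates of the present lemma, not the other way around.

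The paper's proof stays entirely on the piezoelectric layer: it applies the multiplier identity of Lemma \ref{TL} with a function $g\in C^1([l_1,l_2])$ satisfying $g(l_1)=1$ and $g(l_2)=-1$. By Lemma \ref{Lemma5} all the interior integrals (those weighted by $g'$) are $o(1)$, which leaves $\mathcal{M}(l_1)+\mathcal{M}(l_2)$ bounded by the boundary cross terms $2\gamma\beta\abs{p_x(l_j)}\abs{v_x(l_j)}$ and $\abs{\mathcal{J}_2(l_j)}$ up to $o(1)$; Young's inequality combined with $\alpha=\alpha_1+\gamma^2\beta$ and the trace bounds $\abs{f^3(l_j)}=o(1)$, $\abs{f^5(l_j)}=o(1)$ absorbs these into the left-hand side, yielding $\frac{\rho}{2}\abs{\la v(l_j)}^2+\frac{\mu}{2}\abs{\la p(l_j)}^2+\alpha_1\abs{v_x(l_j)}^2=o(1)$ for $j=1,2$, which simultaneously gives the $o(\la^{-2})$ decay of $\abs{v(l_j)}^2$ and the $o(1)$ bound on $\abs{v_x(l_j)}^2$. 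To repair your argument you would need such a boundary-producing multiplier on $(l_1,l_2)$ (or an equivalent device) rather than the Sobolev-trace and outer-layer route.
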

\begin{proof}
Define  $g\in C^1([l_1,l_2])$ such that
\begin{equation}\label{newg}
g(l_2)=-1,\quad g(l_1)=1, \max_{x\in [l_1,l_2]}\abs{g(x)}=m_g\quad \max_{x\in [l_1,l_2]}\abs{g'(x)}=m_g'.	
\end{equation}
By using $g$ in \eqref{TL-EQ1},
\begin{equation}\label{lemma6-EQ1}
\begin{array}{l}
\displaystyle
\mathcal{J}_1(l_1)-\mathcal{J}_1(l_2)=-\int_{l_1}^{l_2}g'\left(\rho\abs{\la v}^2+\alpha\abs{v_x}^2+\mu\abs{\la p}^2+\beta\abs{p_x}^2\right)dx+2\gamma\beta\Re\left(\int_{l_1}^{l_2}g'p_{x}\overline{v_x}dx\right)+\\[0.1in]
\displaystyle
\hspace{3cm}\mathcal{J}_2(l_2)-\mathcal{J}_2(l_1)+o(1),
\end{array}
\end{equation}
which, together with \eqref{newg} and \eqref{Lemma5-EQ1}, implies that
\begin{equation*}
\int_{l_1}^{l_2}g'\left(\rho\abs{\la v}^2+\alpha\abs{v_x}^2+\mu\abs{\la p}^2+\beta\abs{p_x}^2\right)dx=o(1)\quad \text{and}\quad \left|\Re\left(\int_{l_1}^{l_2}g'p_{x}\overline{v_x}dx\right)\right|=o(1). 	
\end{equation*}
Substituting these estimates into \eqref{lemma6-EQ1} and by \eqref{TL-J} leads to
\begin{equation*}
\mathcal{M}(l_1)+\mathcal{M}(l_2)= 2\gamma\beta \Re\left(p_x(l_1)\overline{v_x}(l_1)\right)-2\gamma\beta \Re\left(p_x(l_2)\overline{v_x}(l_2)\right)+\mathcal{J}_2(l_2)-\mathcal{J}_2(l_1)+o(1)
\end{equation*}
where
\begin{equation}\label{Mzeta}
\mathcal{M}(\zeta)=\rho\abs{\la
v(\zeta)}^2+\alpha\abs{v_x(\zeta)}^2+\mu\abs{\la p(\zeta)}^2+\beta\abs{p_x(\zeta)}^2,	
\end{equation}
and therefore,
\begin{equation}\label{Lemma6-Eq3}
\mathcal{M}(l_1)+\mathcal{M}(l_2)\leq 2\gamma\beta \abs{p_x(l_1)}\abs{v_x(l_1)}+2\gamma\beta\abs{p_x(l_2)}\abs{v_x(l_2)}+\abs{\mathcal{J}_2(l_2)}+\abs{\mathcal{J}_2(l_1)}+o(1).	
\end{equation}
Now, use the fact that $f^3\in H^1(l_1,l_2)\subset C([l_1,l_2]),$ $f^5\in H^1_{\ast}(l_1,l_2)\subset C([l_1,l_2]),$ and $\|\mathcal{F}\|_{\mathcal{H}}=o(1)$ (cf. \eqref{EPE-EXP-EQ3}), to obtain
\begin{equation}\label{Lemma6-Eq4}
\abs{f^3(\zeta)}=o(1)\quad \text{and}\quad \abs{f^5(\zeta)}=o(1),\quad \text{where}\quad \zeta\in \{l_1,l_2\}. 	
\end{equation}
Finally,  by \eqref{Lemma6-Eq4}, the definition of $\mathcal{J}_2$ given in \eqref{TL-J}, and Young's inequality,
\begin{equation}\label{Lemma6-Eq5}
\left\{\begin{array}{l}
2\gamma\beta \abs{p_x(\zeta)}\abs{v_x(\zeta)}\leq \gamma^2\beta\abs{v_x(\zeta)}^2+
\beta\abs{p_x(\zeta)}^2,\\[0.1in]
\abs{\mathcal{J}_2(\zeta)}\leq 	\frac{1}{2}\rho \abs{\la v(\zeta)}^2+2\rho\abs{f^3(\zeta)}
^2+\frac{1}{2}\mu\abs{\la p(\zeta)}^2+2\mu\abs{f^5(\zeta)}^2\leq \frac{1}{2}\rho \abs{\la
v(\zeta)}^2+\frac{1}{2}\mu\abs{\la p(\zeta)}^2+o(1),
\end{array}
\right.	
\end{equation}
where $\zeta\in \{l_1,l_2\}$. To get the desired result \eqref{Lemma6-EQ1}, substitute  \eqref{Lemma6-Eq5} into \eqref{Lemma6-Eq3} and use
$\alpha=\alpha_1+\gamma^2\beta$,
\begin{equation}\label{Lemma6-Eq6}
\sum_{j=1}^{2}\left(\frac{\rho}{2}\abs{\la v(l_j)}^2+\frac{\mu}{2}\abs{\la p(l_j)}^2+\alpha_1\abs{v_x(l_j)}^2\right)\leq o(1). 	
\end{equation}
\end{proof}
\begin{lemma}\label{Lemma7}
The solution $(u,u^1,v,z,p,q,y,y^1)$ of the system \eqref{P1-EXP}-\eqref{P2-EXP} satisfies the
following estimates
\begin{equation}\label{Lemma7-Eq1}
\int_0^{l_1}\left(\abs{\la u}^2+c_1\abs{u_x}^2\right)=o(1) \quad \text{and}\quad \int_{l_2}^L\left(\abs{\la y}^2+c_2\abs{y_x}^2\right)dx=o(1). 	
\end{equation}
\end{lemma}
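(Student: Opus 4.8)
The plan is to run a Rellich-type multiplier argument on each of the two undamped wave equations $\eqref{P3-EXP}_1$ and $\eqref{P3-EXP}_4$, feeding in the interface information already collected for the piezoelectric layer. First I would transfer the estimates of Lemma \ref{Lemma6} to the two interfaces by means of the equivalent transmission conditions of Remark \ref{NewCT}: from $\alpha_1 v_x(l_1)=c_1u_x(l_1)$ and $u(l_1)=v(l_1)$ together with $\abs{v_x(l_1)}^2=o(1)$ and $\abs{v(l_1)}^2=o(\la^{-2})$ one obtains
\begin{equation*}
\abs{u_x(l_1)}^2=o(1),\qquad \abs{\la u(l_1)}^2=o(1),
\end{equation*}
and symmetrically $\abs{y_x(l_2)}^2=o(1)$ and $\abs{\la y(l_2)}^2=o(1)$ from $\alpha_1 v_x(l_2)=c_2y_x(l_2)$ and $y(l_2)=v(l_2)$.

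Next I would multiply $\eqref{P3-EXP}_1$ by $2x\,\overline{u_x}$, integrate over $(0,l_1)$, take the real part, and integrate by parts. Since $u(0)=0$ the boundary contribution at $x=0$ vanishes, and one is left with the identity
\begin{equation*}
\int_0^{l_1}\left(\abs{\la u}^2+c_1\abs{u_x}^2\right)dx=l_1\left(\abs{\la u(l_1)}^2+c_1\abs{u_x(l_1)}^2\right)-2\Re\int_0^{l_1}x\,F^1\,\overline{u_x}\,dx.
\end{equation*}
The bracketed boundary term is $o(1)$ by the interface estimates above. For the remaining integral I would use $F^1=-(f^2+i\la f^1)$ from \eqref{F1234}: the contribution of $f^2$ is $o(1)$ by Cauchy--Schwarz, since $\|u_x\|_{L^2(0,l_1)}$ is bounded (because $\|U\|_{\mathcal H}=1$) while $\|f^2\|_{L^2(0,l_1)}=o(1)$.

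The only delicate point — and the main obstacle — is the term $\Re\int_0^{l_1}x\,(i\la f^1)\,\overline{u_x}\,dx$, for which a direct bound would cost a factor $\abs{\la}$. Here I would integrate by parts once to move the derivative off $u$, so that $i\la$ lands on $\overline u$ and can be eliminated through $\eqref{P1-EXP}_1$ written as $i\la u=u^1+f^1$: the resulting volume integral $\int_0^{l_1}(f^1+xf^1_x)(\overline{u^1}+\overline{f^1})\,dx$ is $o(1)$ because $u^1$ is bounded in $L^2(0,l_1)$ while $f^1$ and $f^1_x$ are $o(1)$ there, and the boundary term generated at $x=l_1$ is bounded by $l_1\abs{f^1(l_1)}\,\abs{\la u(l_1)}=o(1)$, using the trace bound $\abs{f^1(l_1)}=o(1)$ (from $f^1\to 0$ in $H^1_L(0,l_1)\hookrightarrow C([0,l_1])$) and the interface estimate above. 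This yields the first estimate in \eqref{Lemma7-Eq1}; the second follows by the identical computation applied to $\eqref{P3-EXP}_4$ with the multiplier $2(x-L)\,\overline{y_x}$, exploiting $y(L)=0$ to discard the boundary term at $x=L$ and the interface bounds at $x=l_2$.
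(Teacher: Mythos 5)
Your proposal is correct and follows essentially the same route as the paper: the same multipliers $2x\overline{u_x}$ and $2(x-L)\overline{y_x}$, the same transfer of the interface estimates of Lemma \ref{Lemma6} through Remark \ref{NewCT} and the continuity conditions, and the same integration by parts converting the problematic term $\Re\int_0^{l_1} x\,(i\la f^1)\overline{u_x}\,dx$ into a volume term controlled by the boundedness of $\la u$ (equivalently $u^1$) plus a boundary term bounded by $l_1\abs{f^1(l_1)}\abs{\la u(l_1)}=o(1)$. No gaps; the argument matches the paper's proof up to sign conventions.
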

\begin{proof}
First, multiply $\eqref{P3-EXP}_1$ and 	$\eqref{P3-EXP}_4$ by $-2x\overline{u_x}$ and $-2(x-L)\overline{y_x}$,  and integrate over $(0,l_1)$ and $(l_2,L)$, respectively. Since  $\|\mathcal{F}\|_{\mathcal{H}}=o(1),$ and $u_x$ and $y_x$ are uniformly bounded  in $L^2(0,l_1)$ and $L^2(l_2,L)$,  respectively,
\begin{equation*}
-\int_0^{l_1}x\left(\abs{\la u}^2\right)_xdx-c_1\int_0^{l_1}x\left(\abs{u_x}
^2\right)_x=\underbrace{2\Re\left(\int_0^{l_1}xf^2\overline{u_x}dx\right)}_{=o(1)}+2\Re\left(i\la \int_0^{l_1}
xf^1\overline{u_x}dx\right)
\end{equation*}
and
\begin{equation*}
-\int_{l_2}^{L}(x-L)\left(\abs{\la y}^2\right)_xdx-c_2\int_{l_2}^L(x-L)\left(\abs{y_x}
^2\right)_x=\underbrace{2\Re\left(\int_{l_2}^{L}(x-L)f^8\overline{y_x}dx\right)}_{=o(1)}+2\Re\left(i\la
\int_{l_2}^L(x-L)f^7\overline{y_x}dx\right).	
\end{equation*}
Since  $\|\mathcal{F}\|_{\mathcal{H}}=o(1),$ and $(\la u)$ and $(\la y)$ are uniformly bounded in $L^2(0,l_1)$ and $L^2(l_2,L)$, respectively,
\begin{equation}\label{Lemma7-Eq2}
\begin{array}{l}
\displaystyle
\int_0^{l_1}\left(\abs{\la u}^2+c_1\abs{u_x}^2\right)dx-l_1\left(\abs{\la u(l_1)}^2+c_1\abs{u_x(l_1)}^2\right)=\\[0.1in]
\underbrace{-2\Re\left(i\la \int_0^{l_1}
(xf^1)_x\overline{u}dx\right)}_{=o(1)}+2\Re\left(i\la l_1f^1(l_1)\overline{u}(l_1)\right) + o(1)
\end{array}
\end{equation}
and
\begin{equation}\label{Lemma7-Eq3}
\begin{array}{l}
\displaystyle
\int_{l_2}^{L}\left(\abs{\la y}^2+c_2\abs{y_x}^2\right)dx+\left(l_2-L\right)\left(\abs{\la y(l_2)}^2+c_2\abs{y_x(l_2)}^2\right)=\\[0.1in]
\displaystyle
-\underbrace{2\Re\left(i\la
\int_{l_2}^L((x-L)f^7)_x\overline{y}dx\right)}_{=o(1)}-2\Re\left(i\la (l_2-L)f^7(l_2)\overline{y}(l_2)\right) + o(1).	
\end{array}
\end{equation}
Recalling Remark \ref{NewCT}, $u(l_1)=v(l_1)$, $v(l_2)=y(l_2)$, \eqref{Lemma6-EQ1} and the facts that
$$
\abs{f^1(l_1)}\leq \int_0^{l_1}\abs{f^1_x}dx=o(1)\quad \text{and}\quad \abs{f^7(l_2)}\leq \int_{l_2}^L\abs{f^7_x}dx=o(1),
$$
the following is obtained
$$
\left\{\begin{array}{l}
\displaystyle
\abs{\la u(l_1)}^2+c_1\abs{u_x(l_1)}^2=o(1),\quad \abs{\la y(l_2)}^2+c_2\abs{y_x(l_2)}^2=o(1),\\[0.1in]
\displaystyle
\left|\Re\left(i\la l_1f^1(l_1)\overline{u}(l_1)\right)\right|=o(1),\quad \left|\Re\left(i\la (l_2-L)f^7(l_2)\overline{y}(l_2)\right)\right|=o(1).
\end{array}
\right.
$$
Finally, substitution of the estimates above into \eqref{Lemma7-Eq2} and \eqref{Lemma7-Eq3},  the desired result \eqref{Lemma7-Eq1} is obtained.
\end{proof}
Now, we are ready to finally prove  Theorem \ref{EPE-EXP}.
$\newline$
\noindent \textbf{Proof of Theorem \ref{EPE-EXP}}. By \eqref{Lemma5-EQ1} and \eqref{Lemma7-Eq1},  we obtain that $\|U\|_{\mathcal{H}}=o(1)$. This contradicts that $\|U\|_{\mathcal{H}}=1$. Hence, \eqref{M2} holds true, and this makes the proof complete.
{\color{black}
\begin{rem} \label{rem-electro}
It is important to note that electrostatic/quasi-static approaches in the modeling of piezoelectric beams exclude dynamic electromagnetic effects, expressed as $\mu\equiv 0$ in \eqref{EPE}. Consequently, the reduced model aligns with the one derived in \cite{Fatori2}, where exponential stability is achieved by introducing two fully-distributed viscous damping terms in the outer wave equations. Notably, our analysis in this section reveals that under the assumption \eqref{LD-P}, an exponential stability result can also be obtained with solely local damping in the middle layer. The proof remains identical (or even simpler) to the one presented earlier and is left to the reader. This represents a significant advancement over the previous result.
\end{rem}
}

\section{Stability results for the system \eqref{PE}}\label{PE-LD-E}

\sn{Note that the assumption  \eqref{LD-E}  applies to all results in this section. For simplicity, the repetition of the assumption in the results below is avoided unless it is necessary to state.}

\subsection{Well-Posedness}\label{WP-PE} In this section,  the well-posedness of the system \eqref{PE} is established  by a semigroup approach. The natural energy of the system \eqref{PE} is  defined by
$$
E_{PE}(t)=\frac{1}{2}\int_{0}^{l_1}\left(\rho\abs{v_t}^2+\alpha_1\abs{v_x}^2+\mu\abs{p_t}^2+\beta\abs{\gamma v_x-p_x}^2\right)dx+\frac{1}{2}\int_{l_1}^{L}\left(\abs{y_t}^2+c_2\abs{y_x}^2\right)dx.
$$
It is straightforward to show that the energy $E_{PE}(t)$ is dissipative along the smooth enough solutions of \eqref{PE}, i.e.
\begin{equation}\label{dEnergy-PE}
\frac{dE_{PE}(t)}{dt}=-\int_{l_1}^Ld_1\abs{y_t}^2dx. 	
\end{equation}
Define the energy space $\HH_{PE}$
\begin{eqnarray}\label{defHPE}
\HH_{PE}:=\left\{(v,z,p,q,y,y^1)\in \left(H_L^1(0,l_1)\times L^2(0,l_1)\right)^2\times H_R^1(l_1,L)\times L^2(l_1,L), v(l_1)=y(l_1)\right\}
\end{eqnarray}
 equipped by the  norm
\begin{equation}\label{norm-PE}
\begin{array}{ll}
\displaystyle
\|U\|_{\HH_{PE}}^2&=\int_{0}^{l_1}\left(\alpha_1\abs{v_x}^2+\rho\abs{z}^2+\beta\abs{\gamma v_x-p_x}^2+\mu\abs{q}^2\right)dx \\
\displaystyle
 &\qquad\qquad+\int_{l_1}^{L}\left(c_2\abs{y_x}^2+\abs{y^1}^2\right)dx,\quad \ma{\forall U=(v,z,p,q,y,y^1)\in \HH_{PE}}.
\end{array}
\end{equation}
This norm is equivalent to the standard norm of $\HH_{PE}$ (the   arguments in the proof of Lemma \ref{equiv-norm}  can be followed mutatis mutandis). Define the unbounded linear operator $\mathcal{A}_{PE}: D(\mathcal{A}_{PE})\subset \HH_{PE}\rightarrow \HH_{PE}$ by
\begin{eqnarray}\label{defAPE}
\mathcal{A}_{PE}\begin{pmatrix}
v\\ z\\ p\\ q\\ y\\ y^1	
\end{pmatrix}=\begin{pmatrix}
z\\
\frac{1}{\rho}\left(\alpha v_{xx}-\gamma\beta p_{xx}\right)\\
q\\
\frac{1}{\mu}\left(\beta p_{xx}-\gamma\beta v_{xx}\right)\\
y^1\\
c_2y_{xx}-d_1 y^1	
\end{pmatrix},\quad \forall U=(v,z,p,q,y,y^1)\in D(\AA_{PE})
\end{eqnarray}
with the domain
\begin{eqnarray}\label{DAPE}
D(A_{PE})=\left\{\begin{array}{l}
U:=(v,z,p,q,y,y^1)\in \mathcal{H}_{PE};\ z,q\in H^1_L(0,l_1),\ y^1\in H_R^1(0,l_1),\\
\ v, p\in H^2(0,l_1)\cap H_{L}^{1}(0,l_1), ~ y\in H^2(l_1,L)\cap H_R^1(l_1,L),\\
  \alpha v_x(l_1)-\gamma\beta p_x(l_1)=c_2y_x(l_1),\ \beta p_x(l_1)=\gamma \beta v_x(l_1),\quad y^1(l_1)=z(l_1)
\end{array}
\right\}.
\end{eqnarray}
\begin{rem}\label{NewCT-PE}
Obviously as in the previous section, the transmission conditions
\[
 \alpha v_x(l_1)-\gamma\beta p_x(l_1)=c_2y_x(l_1),\hbox{ and } \beta p_x(l_1)=\gamma \beta v_x(l_1),
 \]
 are equivalent to the transmission conditions
\[
\alpha_1 v_x(l_1)=c_2y_x(l_1), \hbox{ and } \alpha_1 p_x(l_1)=c_2\gamma y_x(l_1).
\]	

\end{rem}
\noindent If $(v,p,y)^{\top}$ is a sufficiently regular solution of the system \eqref{PE}, the system can be transformed into the first order evolution equation on the Hilbert space $\mathcal{H}_{PE}$
\begin{equation}\label{evolution-PE}
U_t=\mathcal{A}_{PE}U,\quad U(0)=U_0,	
\end{equation}
with \ma{$U=(v,v_t,p,p_t,y,y_t)$ and $U_0=(v_0,v_1,p_0,p_1,y_0,y_1)$}. \ma{By the analogous arguments in Subsection \ref{WP-EPE}},
the solution of the Cauchy problem \eqref{evolution-PE} admits the following representation
$$
U(t)=e^{t\mathcal{A}_{PE}}U_0,\quad t\geq 0,
$$
which leads to the well-posedness for \eqref{evolution-PE}.
\begin{theoreme}
Let $U_{0}\in \mathcal{H}_{PE}$, the system \eqref{evolution-PE} admits a unique weak solution $U$ satisfying
$$
U\in C^0(\mathbb{R}^+,\mathcal{H}_{PE}).
$$
Moreover, if $U_0\in D(\mathcal{A}_{PE})$, the system \eqref{evolution-PE} admits a unique strong solution $U$ satisfying
$$
U\in C^1(\mathbb{R}^{+},\mathcal{H}_{PE})\cap C^0(\mathbb{R}^+,D(\mathcal{A}_{PE})).
$$
\end{theoreme}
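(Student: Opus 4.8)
The plan is to follow, essentially verbatim, the semigroup argument of Subsection \ref{WP-EPE} in the present (slightly simpler) setting: show that $\mathcal{A}_{PE}$ is $m$-dissipative on $\mathcal{H}_{PE}$ and then invoke the Lumer--Phillips theorem together with the generation theorem for $C_0$-semigroups. For dissipativity, I would compute $\Re\langle\mathcal{A}_{PE}U,U\rangle_{\mathcal{H}_{PE}}$ for $U=(v,z,p,q,y,y^1)\in D(\mathcal{A}_{PE})$ directly from the inner product associated with \eqref{norm-PE}: integrating by parts the terms $\alpha v_{xx}-\gamma\beta p_{xx}$, $\beta p_{xx}-\gamma\beta v_{xx}$ and $c_2y_{xx}$ against $\overline z$, $\overline q$ and $\overline{y^1}$, and regrouping with the elastic cross-terms exactly as in the proof of Lemma \ref{denergy}, all interface contributions at $x=l_1$ cancel — the term from the $p$-equation vanishes because $\beta p_x(l_1)=\gamma\beta v_x(l_1)$, and the one from the $v$-equation combines with the one from the $y$-equation because $z(l_1)=y^1(l_1)$ and $\alpha v_x(l_1)-\gamma\beta p_x(l_1)=c_2y_x(l_1)$ (cf. Remark \ref{NewCT-PE}) — leaving only $-\int_{l_1}^L d_1\abs{y^1}^2\,dx\le 0$, which is the infinitesimal form of \eqref{dEnergy-PE}. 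Hence $\mathcal{A}_{PE}$ is dissipative.

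For maximality, given $F=(f_1,\dots,f_6)\in\mathcal{H}_{PE}$ I would solve $-\mathcal{A}_{PE}U=F$. This forces $z=-f_1$, $q=-f_3$, $y^1=-f_5$ (which lie in the correct spaces and satisfy $z(l_1)=y^1(l_1)$ precisely because the constraint $v(l_1)=y(l_1)$ built into $\mathcal{H}_{PE}$ yields $f_1(l_1)=f_5(l_1)$), and reduces the problem to the elliptic system $-\alpha v_{xx}+\gamma\beta p_{xx}=\rho f_2$, $-\beta p_{xx}+\gamma\beta v_{xx}=\mu f_4$ on $(0,l_1)$ and $-c_2 y_{xx}=f_6-d_1y^1$ on $(l_1,L)$, with $v(0)=p(0)=y(L)=0$, $v(l_1)=y(l_1)$, and the transmission conditions. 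I would pose this weakly on $V:=\{(v,p,y)\in H_L^1(0,l_1)\times H_L^1(0,l_1)\times H_R^1(l_1,L):v(l_1)=y(l_1)\}$ with the sesquilinear form
\[
a\big((v,p,y),(\varphi,\psi,\chi)\big)=\int_0^{l_1}\Big((\alpha v_x-\gamma\beta p_x)\overline{\varphi_x}+(\beta p_x-\gamma\beta v_x)\overline{\psi_x}\Big)dx+c_2\int_{l_1}^{L}y_x\overline{\chi_x}\,dx,
\]
noting that with the constraint $\varphi(l_1)=\chi(l_1)$ imposed in $V$ the two transmission conditions at $l_1$ become exactly the natural boundary conditions of $a$. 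Continuity of $a$ is immediate, and coercivity follows from \eqref{alpha1} since $a\big((v,p,y),(v,p,y)\big)=\int_0^{l_1}(\alpha_1\abs{v_x}^2+\beta\abs{\gamma v_x-p_x}^2)dx+c_2\int_{l_1}^L\abs{y_x}^2dx$, which controls $\|v_x\|_{L^2(0,l_1)}^2$, then $\|p_x\|_{L^2(0,l_1)}^2$ (via $\abs{p_x}^2\le 2\gamma^2\abs{v_x}^2+2\abs{\gamma v_x-p_x}^2$) and $\|y_x\|_{L^2(l_1,L)}^2$, hence the full $V$-norm by Poincaré (using $v(0)=p(0)=y(L)=0$). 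Lax--Milgram then gives a unique weak solution, and an elliptic bootstrap — the $2\times2$ coefficient matrix of $(v_{xx},p_{xx})$ has determinant $\beta\alpha_1\neq0$, so $v_{xx},p_{xx}\in L^2(0,l_1)$ and likewise $y_{xx}\in L^2(l_1,L)$ — upgrades it to $U\in D(\mathcal{A}_{PE})$ with $-\mathcal{A}_{PE}U=F$. Combined with dissipativity, $\mathcal{A}_{PE}$ is thus $m$-dissipative with $0\in\rho(\mathcal{A}_{PE})$, so by Lumer--Phillips it generates a $C_0$-semigroup of contractions $(e^{t\mathcal{A}_{PE}})_{t\ge0}$ on $\mathcal{H}_{PE}$; the asserted weak and strong solution statements are then the standard output of the generation theorem, exactly as at the end of Subsection \ref{WP-EPE}.

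I expect the only genuinely delicate point to be the bookkeeping of the interface and boundary terms: one must check carefully that, with $v(l_1)=y(l_1)$ encoded in both $\mathcal{H}_{PE}$ and $V$, the three boundary contributions at $x=l_1$ cancel (this is where Remark \ref{NewCT-PE} is used), and that coercivity of $a$ really holds — which, as in Lemma \ref{equiv-norm}, rests on the identity $\alpha=\alpha_1+\gamma^2\beta$ with $\alpha_1>0$ and not on any naive positivity of the $2\times2$ elastic block.
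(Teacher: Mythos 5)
Your proposal is correct and follows exactly the route the paper takes, since the paper's own proof simply transfers the semigroup argument of Subsection \ref{WP-EPE} (dissipativity from the interface cancellations, surjectivity of $-\mathcal{A}_{PE}$ via Lax--Milgram with coercivity resting on $\alpha=\alpha_1+\gamma^2\beta$, then Lumer--Phillips) to the \eqref{PE} setting. In fact you carry out in detail the steps the paper only cites by analogy, and your treatment of the transmission conditions as natural boundary conditions of the form $a$ on $V$ is exactly what makes that analogy work.
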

\subsection{Strong Stability}\label{SS-PE} In this section,   the strong stability of the system \eqref{PE} is investigated. Here is the main result.
\begin{theoreme}\label{SS-PE}
The $C_0-$semigroup of contraction $\left(e^{t\mathcal{A}_{PE}}\right)$ is strongly stable in $\mathcal{H}_{PE}$, i.e., for all $U_0\in \mathcal{H}_{PE}$, the solution of \eqref{evolution-PE} satisfies $\displaystyle{\lim_{t\to \infty}\|e^{t\mathcal{A}_{PE}}U_0\|_{\mathcal{H}_{PE}}=0},$ if and only if
\begin{equation}\label{SS-SC-PE}\tag{${\rm SC}$}
\frac{\sigma_{+}}{\sigma_{-}}\neq \frac{2n_+-1}{2n_--1},\quad \forall n_+,n_-\in \mathbb{N}
\end{equation}
where \sn{the two positive real numbers $\sigma_{+}$ and $\sigma_{-}$ are defined by}
\begin{equation}\label{sigma+-}
\sigma_{+}:=\sqrt{\frac{(\rho\beta+\mu\alpha)+ \sqrt{(\rho\beta-\mu\alpha)^2+4\gamma^2\beta^2\mu\rho}}{2\beta\alpha_1}}\quad \text{and}\quad \sigma_{-}:=\sqrt{\frac{(\rho\beta+\mu\alpha)-\sqrt{(\rho\beta-\mu\alpha)^2+4\gamma^2\beta^2\mu\rho}}{2\beta\alpha_1}}. 	
\end{equation}	
\end{theoreme}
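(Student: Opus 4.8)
The plan is to run the same scheme as in Theorem~\ref{SS-EPE}. Since $\mathcal{A}_{PE}$ has compact resolvent and $0\in\rho(\mathcal{A}_{PE})$ (Subsection~\ref{WP-PE}), the Arendt--Batty theorem reduces strong stability to the absence of eigenvalues of $\mathcal{A}_{PE}$ on $i\mathbb{R}^{\ast}$. Thus the whole statement is equivalent to the single assertion: \emph{$\mathcal{A}_{PE}$ has an eigenvalue $i\lambda$ with $\lambda\in\mathbb{R}^{\ast}$ if and only if Condition~\eqref{SS-SC-PE} is violated.}

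For the forward direction, fix $\lambda\neq0$ and $U=(v,z,p,q,y,y^1)\in D(\mathcal{A}_{PE})$ with $\mathcal{A}_{PE}U=i\lambda U$. Taking the inner product with $U$ and invoking the dissipation identity \eqref{dEnergy-PE} (exactly as in \eqref{SS-EQ4}) gives $\int_{l_1}^{L}d_1|y^1|^2\,dx=0$, so $y^1=i\lambda y=0$ and hence $y\equiv0$ on $(a_1,b_1)$. Since $y$ solves $c_2y_{xx}+(\lambda^2-i\lambda d_1)y=0$ on $(l_1,L)$ and has vanishing Cauchy data at an endpoint of $(a_1,b_1)$, uniqueness for this Carath\'eodory ODE yields $y\equiv0$ on all of $(l_1,L)$; in particular $y(l_1)=y_x(l_1)=0$. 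Combining $v(l_1)=y(l_1)=0$ with the transmission conditions of Remark~\ref{NewCT-PE} forces $v_x(l_1)=p_x(l_1)=0$, so apart from the clamped conditions $v(0)=p(0)=0$ the only Cauchy datum at $l_1$ not yet pinned is $p(l_1)$.

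The heart of the argument is then to solve the piezoelectric system on $(0,l_1)$, namely $\alpha v_{xx}-\gamma\beta p_{xx}+\rho\lambda^2v=0$ and $\beta p_{xx}-\gamma\beta v_{xx}+\mu\lambda^2p=0$. Inserting $e^{\xi x}$ and using $\alpha\beta-\gamma^2\beta^2=\beta\alpha_1$ and $\alpha=\alpha_1+\gamma^2\beta$ gives the characteristic polynomial $\beta\alpha_1\xi^4+(\rho\beta+\mu\alpha)\lambda^2\xi^2+\rho\mu\lambda^4=0$, whose discriminant simplifies to $(\rho\beta-\mu\alpha)^2+4\gamma^2\beta^2\mu\rho$ and whose roots are precisely $\xi^2=-\lambda^2\sigma_\pm^2$ with $\sigma_\pm$ as in \eqref{sigma+-}. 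Imposing $v(0)=p(0)=0$, the solution is $v=A_+\sin(\lambda\sigma_+x)+A_-\sin(\lambda\sigma_-x)$ and $p=\kappa_+A_+\sin(\lambda\sigma_+x)+\kappa_-A_-\sin(\lambda\sigma_-x)$, where $\kappa_+\neq\kappa_-$ are the corresponding mode-coupling constants. Feeding in $v_x(l_1)=p_x(l_1)=0$ and exploiting $\kappa_+\neq\kappa_-$ forces $A_+\cos(\lambda\sigma_+l_1)=A_-\cos(\lambda\sigma_-l_1)=0$, and then $v(l_1)=0$ rules out the case where exactly one of $A_\pm$ vanishes. Hence a nonzero eigenvector exists if and only if $\cos(\lambda\sigma_+l_1)=\cos(\lambda\sigma_-l_1)=0$, i.e. $\lambda\sigma_+l_1=\tfrac{(2n_+-1)\pi}{2}$ and $\lambda\sigma_-l_1=\tfrac{(2n_--1)\pi}{2}$ for some $n_\pm\in\mathbb{N}$, which upon division is exactly the negation of \eqref{SS-SC-PE}. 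For the converse, given $n_\pm^0$ with $\sigma_+/\sigma_-=\tfrac{2n_+^0-1}{2n_-^0-1}$ I would take $\lambda=\tfrac{(2n_+^0-1)\pi}{2\sigma_+l_1}$, set $y\equiv0$ and $A_-=\mp A_+\neq0$ so that $v(l_1)=0$, recover $z=i\lambda v$, $q=i\lambda p$, and check that the resulting $U$ is a nonzero element of $D(\mathcal{A}_{PE})$, giving the eigenvalue $i\lambda$.

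The step I expect to be the real obstacle is the linear-algebra bookkeeping at $x=l_1$: one must verify that, thanks to $\kappa_+\neq\kappa_-$, the three homogeneous conditions $v(l_1)=v_x(l_1)=p_x(l_1)=0$ on $(A_+,A_-)$ genuinely collapse to the single arithmetic alternative in \eqref{SS-SC-PE}, and, in the converse direction, that the explicitly constructed profile meets every regularity and compatibility requirement defining $D(\mathcal{A}_{PE})$ --- in particular the $H^1_L(0,l_1)$ membership of $v$ and $p$ and the two transmission identities of Remark~\ref{NewCT-PE} with $y\equiv0$ --- so that $i\lambda$ is a true eigenvalue and not merely a point of the approximate spectrum.
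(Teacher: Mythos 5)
Your proposal is correct and follows essentially the same route as the paper: Arendt--Batty plus compact resolvent, the dissipation identity killing $y$ on $(a_1,b_1)$ and unique continuation giving $y\equiv 0$ hence $v(l_1)=v_x(l_1)=p_x(l_1)=0$, then explicit solution of the constant-coefficient piezoelectric system on $(0,l_1)$ with characteristic roots $\pm i\lambda\sigma_\pm$, leading to the dichotomy $\cos(\lambda\sigma_+l_1)=\cos(\lambda\sigma_-l_1)=0$ and the odd-ratio condition \eqref{SS-SC-PE}. The only cosmetic difference is that the paper eliminates $p$ to obtain a scalar fourth-order ODE for $v$ (with the conditions $v(0)=v_{xx}(0)=v(l_1)=v_x(l_1)=v_{xxx}(l_1)=0$), whereas you keep the coupled system and expand in the two vector modes with coupling constants $\kappa_\pm$ (the paper's $b_\pm$, whose distinctness is indeed easily checked), and your explicit converse construction of the eigenvector meets the requirements of $D(\mathcal{A}_{PE})$ exactly as the paper's implicit one does.
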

\begin{proof}
It follows from the  Arendt-Batty theorem (see page 837 in \cite{Arendt01}), since the resolvent of $\mathcal{A}_{PE}$ is compact in $\mathcal{H}_{PE}$,   the system \eqref{PE} is strongly
stable if and only if $\mathcal{A}_{PE}$ does not have pure imaginary eigenvalues, i.e.
 $\sigma(\mathcal{A}_{PE})\cap i\mathbb{R}=\emptyset$. By Section \ref{WP-PE},
$0\in \rho(\mathcal{A}_{PE})$ is immediate. However,  $\sigma(\mathcal{A}
_{PE})\cap i\mathbb{R}^{\ast}=\emptyset$ must be proved. For this purpose, for a real number $\la\neq 0$ and $U=(v,z,p,q,y,y^1)\in D(\mathcal{A}_{PE}),$ consider
\begin{equation}\label{SS-EQ1-PE}
\mathcal{A}_{PE}U=i\la U,
\end{equation}
which is equivalent to the following system
\begin{equation}\label{SS-EQ2-PE}
z=i\la v\ \text{in}\ (0,l_1),\quad q=i\la p\ \text{in}\ (0,l_1),\quad \text{and}\quad 	y^1=i\la y\ \text{in}\ (l_1,L),
\end{equation}
and
\begin{equation}\label{SS-EQ3-PE}
\left\{\begin{array}{ll}
\rho \la^2v+\alpha v_{xx}-\gamma\beta p_{xx}=0,&\\
\mu \la^2p+\beta p_{xx}-\gamma \beta v_{xx}=0,& x\in (0,l_1),\\
\la^2 y+c_2 y_{xx}-d_1y^1=0,& x\in (l_1,L), 	
\end{array}
\right.	
\end{equation}
From the identity
\[
\Re\left(\mathcal{A}_{PE}
U,U\right)_{\mathcal{H}_{PE}}=-\int_{l_1}^{L}d_1\abs{y^1}^2dx,
\]
 and  \eqref{SS-EQ1-PE},
\begin{equation}\label{SS-EQ4-PE}
0=\Re \left(i\la U,U\right)_{\mathcal{H}_{PE}}=\Re\left(\mathcal{A}_{PE}
U,U\right)_{\mathcal{H}_{PE}}=-\int_{l_1}^{L}d_1\abs{y^1}^2dx.
\end{equation}	
Thus,
\begin{equation}\label{SS-EQ5-PE}
d_1y^1=0\ \text{in}\ (l_1,L),\ \text{and consequently},~~y^1=y=0~~ \text{in}\quad (a_1,b_1)
\end{equation}
by  \eqref{SS-EQ2-PE}, \eqref{LD-E} and \eqref{SS-EQ4-PE}.  Considering $\eqref{SS-EQ3-PE}_{3}$, \eqref{SS-EQ5-PE} and the unique continuation theorem, $y=0$ in $(l_1,L)$. Moreover,  since $y\in H^2(l_1,L)\subset C^1([l_1,L])$, $y(l_1)=y_x(l_1)=0$. It follows from the continuity condition and Remark \ref{NewCT-PE} that $v(l_1)=v_x(l_1)=p_x(l_1)=0$.
Using the fact that $\alpha=\alpha_1+\gamma^2\beta$,  $\eqref{SS-EQ3-PE}_1$ and $\eqref{SS-EQ3-PE}_2$, the system \eqref{SS-EQ2-PE}-\eqref{SS-EQ3-PE} reduced to
\begin{equation}\label{SS-EQ6-PE}
\left\{\begin{array}{l}
v_{xx}=-\la^2\alpha_1^{-1}\left(\rho v+\gamma\mu p\right),\\
p_{xx}=-\la^2\alpha_1^{-1}\left(\gamma\rho v+\mu\alpha\beta^{-1}p\right),\quad x\in (0,l_1),\\
v(0)=p(0)=v(l_1)=v_x(l_1)=p_x(l_1)=0. 	
\end{array}
\right.
\end{equation}
\ma{By differentiating   \eqref{SS-EQ6-PE}$_1$ twice,  using \eqref{SS-EQ6-PE}$_2$ and \eqref{SS-EQ6-PE}$_3$},  the following system is obtained
\begin{equation}\label{SS-EQ7-PE}
\left\{\begin{array}{l}
\alpha_1\beta v_{xxxx}+\la^2(\rho\beta+\mu\alpha)v_{xx}+\mu\rho\la^4 v=0,\\
v(0)=v_{xx}(0)=v(l_1)=v_x(l_1)=v_{xxx}(l_1)=0.	
\end{array}
\right.
\end{equation}
The characteristic polynomial corresponding to  \eqref{SS-EQ7-PE} is
\begin{equation}\label{Carac-Eq}
q(\varkappa)=\alpha_1\beta \varkappa^4+\la^2(\rho\beta+\mu\alpha)\varkappa^2+\mu\rho\la^4.
\end{equation}
and therefore, define
$$
q_0(m):=\alpha_1\beta m^2+\la^2(\rho\beta+\mu\alpha)m+\mu\rho\la^4.
$$
Since $(\rho\beta+\mu\alpha)^2-4\beta\alpha_1\mu\rho=(\rho\beta-\mu\alpha)^2+4\gamma^2\beta^2\mu\rho>0$,  the polynomial $q_0$ has two distinct real roots $m_-$ and $m_+:$
$$
m_+=-\sigma_+^2\lambda^2\quad \text{and}\quad m_-=-\sigma_-^2\lambda^2
$$
where $\sigma_+$ and $\sigma_-$ are defined by \eqref{sigma+-}. Observe that $m_+<0,$ and by $\ma{\alpha>\gamma^2\beta}$, $m_-<0$ is immediate. Setting $\varkappa_+:=\sqrt{-m_+}$ and $\varkappa_-:=\sqrt{-m_-}$,  $q$ has in total of four roots $i\varkappa_+,-i\varkappa_+,i\varkappa_-,-i\varkappa_-$. Hence, the general solution of \eqref{SS-EQ7-PE} is
$$
v(x)=c_1\sin(\varkappa_+x)+c_2\cos(\varkappa_+x)+c_3\sin(\varkappa_-x)+c_4\cos(\varkappa_-x)
$$
where $c_j\in \mathbb{C}$, $j=1,\cdots,4$. By the boundary conditions in \eqref{SS-EQ7-PE} at $x=0$ and  $\varkappa_+^2-\varkappa_-^2\neq 0$, it is deduced that $c_2=c_4=0$. Moreover, by boundary conditions in \eqref{SS-EQ7-PE} at $x=l_1$,
\begin{equation}\label{SS-EQ8-PE}
\left\{\begin{array}{l}
c_1\sin(\varkappa_+l_1)+c_3\sin(\varkappa_-l_1)=0,\\
c_1\varkappa_+\cos(\varkappa_+l_1)+c_3\varkappa_-\cos(\varkappa_-l_1)=0,\\
c_1\varkappa_+^3\cos(\varkappa_+l_1)+c_3\varkappa_-^3\cos(\varkappa_-l_1)=0.	
\end{array}
\right.	
\end{equation}
Now, by $\eqref{SS-EQ8-PE}_2$ and $\eqref{SS-EQ8-PE}_3$,
\begin{equation*}
M\left(c_1,c_3\right)^{\top}=(0,0)^{\top},\quad \text{where}\quad  M=\begin{pmatrix}
\varkappa_+\cos(\varkappa_+l_1)& \varkappa_-\cos(\varkappa_-l_1)\\ 	\varkappa_+^3\cos(\varkappa_+l_1)& \varkappa_-^3\cos(\varkappa_-l_1).
\end{pmatrix}	
\end{equation*}
It is easy to see that $\det(M)=\varkappa_-\varkappa_+(\varkappa_-^2-\varkappa_+^2)\cos(\varkappa_-l_1)\cos(\varkappa_+l_1)$. Utilizing $\varkappa_+^2-\varkappa_-^2\neq 0$, it is observed that $\det(M)$ vanishes if and only if  $\cos(\varkappa_+l_1)=0$ or $\cos(\varkappa_-l_1)=0$. We split this into three cases:\\
$\textbf{Case 1}$: Consider $\cos(\varkappa_-l_1)=0$ and $\cos(\varkappa_+l_1)\neq0$.  It follows from $\eqref{SS-EQ8-PE}_1$ and $\eqref{SS-EQ8-PE}_2$ that $c_1=c_3=0$. Consequently, $U=0$.\\
$\textbf{Case 2}$: Consider $\cos(\varkappa_-l_1)\neq 0$ and $\cos(\varkappa_+l_1)=0$. It follows from $\eqref{SS-EQ8-PE}_1$ and $\eqref{SS-EQ8-PE}_2$ that  $c_1=c_3=0$. Consequently, $U=0$.\\
$\textbf{Case 3}$ Consider   $\cos(\varkappa_+l_1)=0$ and $\cos(\varkappa_-l_1)=0$.  Then, \ma{there exists $n_+,n_-\in \mathbb{N}$ such that }
\begin{equation}\label{SS-EQ9-PE}
\varkappa_+=\frac{2n_++1}{2l_1}\pi\quad \text{and}\quad \varkappa_-=\frac{2n_-+1}{2l_1}\pi .
\end{equation}
By $c_2=c_4=0,$ and  $\eqref{SS-EQ8-PE}_1$, \eqref{SS-EQ9-PE},  the general solution of \eqref{SS-EQ7-PE} is given by
\begin{equation}\label{SS-EQ10-PE}
v(x)=c\left(\sin\left(\frac{2n_++1}{2l_1}\pi x\right)\pm \sin\left(\frac{2n_-+1}{2l_1}\pi x\right)\right)	
\end{equation}
where $c\in \mathbb{C}$. On the other hand, by \eqref{SS-EQ9-PE} again,
\begin{equation}\label{condition-PE}
\frac{\varkappa_+}{\varkappa_-}=\frac{\sigma_+}{\sigma_-}=\frac{2n_++1}{2n_-+1},\quad n_+,n_-\in \mathbb{N},\quad \text{and}\quad \lambda=\frac{2n_++1}{2l_1\sigma_+}
\end{equation}
where $\sigma_+$ and $\sigma_-$ are defined by \eqref{sigma+-}. Hence, $\sigma\left(\AA_{PE}\right)\cap i\mathbb{R}=\emptyset$ if and only if  \eqref{SS-SC-PE} holds.
\end{proof}
\subsection{Exponential and Polynomial Stability Results}\label{SecPol} The aim of this subsection is to prove the exponential and the polynomial stabilities of the system \eqref{PE} if \eqref{LD-E} holds \sn{and under an appropriate assumption on the ratio  $\frac{\sigma_+}{\sigma_-}$, which depends on its arithmetic nature}.
Let us consider the following hypotheses
\begin{enumerate}
\item[$\rm{\mathbf{(H_{Exp})}}$] Assume that $\frac{\sigma_+}{\sigma_-}\in \mathbb{Q}$ is such that $\frac{\sigma_+}{\sigma_-}=\frac{\xi_+}{\xi_-}$ where $\gcd(\xi_+,\xi_-)=1,$ and $\xi_+, \,\xi_-$ are even and odd integers, respectively, or the other way around.
\item[$\rm{\mathbf{(H_{Pol})}}$] Assume that $\frac{\sigma_+}{\sigma_-}$ is an irrational number. Then,  suppose that
\sn{ there exists $\varpi\left(\frac{\sigma_+}{\sigma_-}\right)\geq 2,$ depending on $\frac{\sigma_+}{\sigma_-}$, such that
for all sequences  $\Lambda=(\xi_{1,n}, \xi_{2,n})_{n\in \mathbb{N}}\in (\mathbb{N}\times \mathbb{N}^*)^{\mathbb{N}}$ with  $\xi_{1,n}\sim \xi_{2,n}$ for  sufficiently large $n$,
 there exist
 a positive constant  $c\left(\frac{\sigma_+}{\sigma_-}, \Lambda\right)$
 and a positive integer $N\left(\frac{\sigma_+}{\sigma_-}, \Lambda\right),$
  depending on $\frac{\sigma_+}{\sigma_-}$ and  the sequence $\Lambda,$
  such that
\[
\left|\frac{\sigma_+}{\sigma_-}-\frac{\xi_{1,n}}{\xi_{2,n}}\right|> \frac{c\left(\frac{\sigma_+}{\sigma_-}, \Lambda\right)}{\xi_{2,n}^{\varpi\left(\frac{\sigma_+}{\sigma_-}\right)}}, \forall n\geq
N\left(\frac{\sigma_+}{\sigma_-}, \Lambda\right).\]
}
\end{enumerate}

\begin{rem} \label{RkSN}
(i) \sn{Note that it will be shown in section \ref{sillustration}  that
the number $\varpi\left(\frac{\sigma_+}{\sigma_-}\right)$ is \sn{indeed} an irrationality measure of the quotient $\frac{\sigma_+}{\sigma_-}$. } More explanations on this notion will be under way as well as examples and some references. (ii) \sn{Note also that $\rm{\mathbf{(H_{Exp})}}$  or $\rm{\mathbf{(H_{Pol})}}$ implies that \eqref{SS-SC-PE} holds.}
\end{rem}

\noindent \ma{By  (\cite{Huang01}, \cite{pruss01} for Theorem \ref{PE-EXP}), or (\cite{Borichev01}, \cite{RaoLiu01} for Theorem \ref{Pol-PE}), } the $C_0-$semigroup of contractions
$\left(e^{t\mathcal{A}_{PE}}\right)_{t\geq 0}$ on $\mathcal{H}_{PE}$ satisfies
\eqref{EPE-EXP-EQ1} or \eqref{Pol-PE-Eq1} if the following two conditions hold
\begin{equation}\label{N1-PE}\tag{$\rm{N1}$}
i\mathbb{R}\subset \rho\left(\mathcal{A}_{PE}\right),	
\end{equation}
\begin{equation}\label{N2-PE}\tag{$\rm{N2}$}
\sup_{\la \in \mathbb{R}}\frac{1}{\la^{\ell}}\|\left(i\la I-\mathcal{A}_{PE}\right)^{-1}\|_{\mathcal{L}(\mathcal{H})}<\infty \quad \text{with} \left\{\begin{array}{ll}
\ell=0,& \text{for Theorem \ref{PE-EXP}},\\
\ell=4\varpi\left(\frac{\sigma_+}{\sigma_-}\right)-4,& \text{for Theorem \ref{Pol-PE}},
\end{array}
\right. 	
\end{equation}
Since \ma{it is already proved that} $i\mathbb{R}\subset \rho(\mathcal{A}_{PE})$ (see Section \ref{SS-PE}), \ma{it remains to} prove the condition \eqref{N2-PE}, \ma{for which a} contradiction argument is applicable. Suppose that \eqref{N2-PE} is false. Then, there exists a sequence $\{(\la^n,U^n)\}_{n\geq 1}\subset \mathbb{R}^{\ast}\times D(\mathcal{A}_{PE})$ with
\begin{equation}\label{PE-EXP-EQ2}
\abs{\la^n}\to \infty\quad \text{and}\quad \|U^n\|_{\mathcal{H}}=\|\left(v^n,z^n,p^n,q^n,y^n,y^{1,n}\right)^{\top}\|_{\mathcal{H}_{PE}}=1	
\end{equation}
such that
\begin{equation}\label{PE-EXP-EQ3}
\la_n^{\ell}\left(i\la^nI-\mathcal{A}_{PE}\right)U^n= \mathcal{G}^n:=\left(g^{1,n},g^{2,n},g^{3,n},g^{4,n},g^{5,n},g^{6,n}\right)^{\top}\to 0\quad \text{in}\quad \mathcal{H}_{PE}.
\end{equation}
For simplicity,   index $n$ is dropped for the rest of the proof. Now, \eqref{PE-EXP-EQ3} is equivalent to
\begin{equation}\label{PE-EXP1}
\left\{\begin{array}{lll}
i\la v-z=\la^{-\ell}g^1\to 0&\text{in}&H^1_L(0,l_1),\\
i\la p-q=\la^{-\ell}g^3\to 0&\text{in}&H^1_L(0,l_1),\\
i\la y-y^1=\la^{-\ell}g^5\to 0&\text{in}&H^1_{R}(l_1,L)\\
\end{array}
\right.
\end{equation}
and
\begin{equation}\label{PE-EXP2}
\left\{\begin{array}{lll}
i\la \rho z-\alpha v_{xx}+\gamma \beta p_{xx}=\rho \la^{-\ell}g^2\to 0&\text{in}&L^2(0,l_1),\\
i\la \mu q-\beta p_{xx}+\gamma \beta v_{xx}=\mu \la^{-\ell}g^4\to 0&\text{in}&L^2(0,l_1),\\
i\la y^1-c_2y_{xx}+d_1y^1=\la^{-\ell}g^6\to 0&\text{in}&L^2(l_1,L).
\end{array}
\right.	
\end{equation}
Combining $\eqref{PE-EXP1}_3$ and $\eqref{PE-EXP2}_3$ leads to
\begin{equation}\label{PE-Combining1}
\la^2y+c_2y_{xx}-i\la d_1 y=\la^{-\ell}(-i\la g^5-d_1g^5-g^6). 	
\end{equation}
\begin{lemma}\label{PE-Lemma1}
The solution $(v,z,p,q,y,y^1)$ of the system \eqref{PE-EXP1}-\eqref{PE-EXP2} satisfies the following estimates
\begin{equation}\label{PE-Lemma1-EQ1}
\int_{l_1}^{L}d_1\abs{y^1}^2dx=o(\la^{-\ell}),\quad \int_{l_1}^{L}d_1\abs{\la y}^2dx=o(\la^{-\ell}), \quad \int_{a_1}^{b_1}\abs{\la y}^2dx=o(\la^{-\ell}).
\end{equation}
and
\begin{equation}\label{PE-Lemma1-EQ2}
\int_{D_{\epsilon}}\abs{y_x}^2dx=o(\la^{-\ell})
\end{equation}
where $D_{\epsilon}:=(a_1+\epsilon,b_1-\epsilon)$ with a small enough $\epsilon>0$  such that $\ma{\epsilon}<\frac{b_1-a_1}{2}$.
\end{lemma}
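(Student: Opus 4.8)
The plan is to adapt the argument of Lemmas \ref{Lemma1} and \ref{Lemma2} to the present configuration, where the damping acts on the elastic layer, while keeping careful track of the weight $\la^{-\ell}$ at every step.

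For the three identities in \eqref{PE-Lemma1-EQ1}, I would first take the inner product of \eqref{PE-EXP-EQ3} with $U$ in $\mathcal{H}_{PE}$. Using the dissipativity relation $\Re\langle\mathcal{A}_{PE}U,U\rangle_{\mathcal{H}_{PE}}=-\int_{l_1}^{L}d_1|y^1|^2\,dx$ together with $\|U\|_{\mathcal{H}_{PE}}=1$ and $\|\mathcal{G}\|_{\mathcal{H}_{PE}}=o(1)$ gives $\int_{l_1}^{L}d_1|y^1|^2\,dx=\la^{-\ell}\Re\langle\mathcal{G},U\rangle_{\mathcal{H}_{PE}}=o(\la^{-\ell})$, which is the first identity. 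Multiplying $\eqref{PE-EXP1}_3$ by $\sqrt{d_1}$ and taking the $L^2(l_1,L)$-norm then yields the second one, since the term coming from $g^5$ contributes at most $\la^{-2\ell}\|d_1\|_{L^\infty}\|g^5\|_{L^2(l_1,L)}^2=\la^{-2\ell}o(1)=o(\la^{-\ell})$ (here $g^5\to0$ in $H^1_R(l_1,L)\hookrightarrow L^2(l_1,L)$). The third is immediate from \eqref{LD-E}, since $d_1\geq d_{1,0}>0$ on $(a_1,b_1)$.

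For \eqref{PE-Lemma1-EQ2} I would fix a cut-off $\theta\in C^2([l_1,L])$ with $0\leq\theta\leq1$, $\theta\equiv1$ on $D_\epsilon=(a_1+\epsilon,b_1-\epsilon)$ and $\supp\theta\subset(a_1,b_1)$, multiply \eqref{PE-Combining1} by $\theta^2\overline{y}$, integrate over $(l_1,L)$ by parts (the boundary terms vanish because $\theta$ is supported in the interior of $(l_1,L)$), and take the real part, which kills the purely imaginary contribution $-i\la\int\theta^2 d_1|y|^2\,dx$. This leaves
\begin{equation*}
c_2\int_{l_1}^{L}\theta^2|y_x|^2\,dx=\int_{l_1}^{L}\theta^2|\la y|^2\,dx-2c_2\Re\int_{l_1}^{L}\theta\theta'\overline{y}\,y_x\,dx+\la^{-\ell}\Re\int_{l_1}^{L}\theta^2\left(i\la g^5+d_1g^5+g^6\right)\overline{y}\,dx.
\end{equation*}
The first term on the right is $\leq\int_{a_1}^{b_1}|\la y|^2\,dx=o(\la^{-\ell})$ by \eqref{PE-Lemma1-EQ1}; the last term is $o(\la^{-\ell})$ by Cauchy--Schwarz, using $\|g^5\|_{L^2(l_1,L)}+\|g^6\|_{L^2(l_1,L)}=o(1)$ while $\la y$ is uniformly bounded and $y=O(\la^{-1})$ in $L^2(l_1,L)$ by $\eqref{PE-EXP1}_3$.

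The delicate point — and the one I expect to be the real obstacle — is the cross term $\int\theta\theta'\overline{y}\,y_x$: a crude Cauchy--Schwarz bound only gives $o(\la^{-\ell/2-1})$, which is \emph{not} $o(\la^{-\ell})$ once $\ell$ is large, i.e. in the polynomial regime $\ell=4\varpi(\sigma_+/\sigma_-)-4$. The remedy is to keep the surviving $\theta$-factor and use Young's inequality,
\[
\left|2c_2\Re\int_{l_1}^{L}\theta\theta'\overline{y}\,y_x\,dx\right|\leq\frac{c_2}{2}\int_{l_1}^{L}\theta^2|y_x|^2\,dx+2c_2\|\theta'\|_{L^\infty}^2\int_{a_1}^{b_1}|y|^2\,dx,
\]
so that the first piece is absorbed into the left-hand side while the second is $\leq C\la^{-2}\int_{a_1}^{b_1}|\la y|^2\,dx=o(\la^{-\ell-2})$ by the third identity of \eqref{PE-Lemma1-EQ1}. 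Rearranging gives $\int_{l_1}^{L}\theta^2|y_x|^2\,dx=o(\la^{-\ell})$, and \eqref{PE-Lemma1-EQ2} follows because $\theta\equiv1$ on $D_\epsilon$. Everything else is routine bookkeeping of powers of $\la^{-1}$.
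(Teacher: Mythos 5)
Your argument is correct and follows essentially the same route as the paper's: the three estimates in \eqref{PE-Lemma1-EQ1} are derived exactly as there (inner product of \eqref{PE-EXP-EQ3} with $U$, then multiplying $\eqref{PE-EXP1}_3$ by $\sqrt{d_1}$, then invoking \eqref{LD-E}), and \eqref{PE-Lemma1-EQ2} by the same localized multiplier argument with a cut-off supported in $(a_1,b_1)$ together with the already-proved bound on $\int_{a_1}^{b_1}\abs{\la y}^2dx$. The only (harmless) difference is the treatment of the cross term: the paper multiplies by $\theta_5\overline{y}$, writes $\Re\left(y_x\overline{y}\right)=\tfrac{1}{2}\left(\abs{y}^2\right)_x$ and integrates by parts once more, getting $o(\la^{-\ell-2})$ from $\theta_5''$ and $\int_{a_1}^{b_1}\abs{y}^2dx$, whereas you take a $\theta^2$ weight and use Young's inequality to absorb $\tfrac{c_2}{2}\int\theta^2\abs{y_x}^2dx$ into the left-hand side — both devices equally circumvent the lossy crude Cauchy--Schwarz bound you rightly flag.
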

\begin{proof} The proof is split into two steps.\\
\textbf{Step 1}. For obtaining first estimate in \eqref{PE-Lemma1-EQ1},  take the inner product of \eqref{PE-EXP-EQ3} and $U$ in $\mathcal{H}_{PE}$, and use the fact that $\|U\|_{\mathcal{H}_{PE}}=1$ and $\|\mathcal{G}\|_{\mathcal{H}_{PE}}=o(1),$
\begin{equation}\label{PE-Lemma1-EQ3}
\int_{l_1}^{L}d_1\abs{y^1}^2dx=-\Re\left(\left<\mathcal{A}_{PE}U,U\right>_{\mathcal{H}_{PE}}\right)=\Re\left(\left<(i\la I-\mathcal{A}_{PE})U,U\right>_{\mathcal{H}_{PE}}\right)=\frac{1}{\la^{\ell}}\Re\left(\left<\mathcal{G},U\right>_{\mathcal{H}_{PE}}\right)=o(\la^{-\ell}). 	
\end{equation}
Next, multiply $\eqref{PE-EXP1}_{3}$ by $\sqrt{d_1},$ and use the first estimate in \eqref{PE-Lemma1-EQ1} and $\|\mathcal{G}\|_{\mathcal{H}}=o(1)$. This leads to the second estimate in \eqref{PE-Lemma1-EQ1}. Finally, the second estimate in \eqref{PE-Lemma1-EQ1} with  \eqref{LD-E} yields the last estimate in \eqref{PE-Lemma1-EQ1}.\\
\textbf{Step 2.}   For proving \eqref{PE-Lemma1-EQ2}, let $0<\epsilon<\frac{b_1-a_1}{2}$ and  fix the cut-off function $\theta_5\in C^2([l_1,L])$ such that $0\leq \theta_5(x)\leq 1$ for all $x\in [l_1,L],$ and
$$
\theta_5(x)=\left\{\begin{array}{lll}
1,&\text{if}&x\in [a_1+\epsilon,b_1-\epsilon],\\
0,&\text{if}&x\in [l_1,a_1]\cup [b_1,L]. 	
\end{array}
\right.
$$
Now, multiply $\eqref{PE-Combining1}$ by $\theta_5\overline{y}$, \ma{integrate by parts over $(l_1,L)$}, and use the definition of $\theta_5$ to obtain	
\begin{equation}\label{PE-Step2-Lemma1-EQ1}
\int_{l_1}^L\theta_5\abs{\la y}^2dx-c_2\int_{l_1}^L\theta_5\abs{y_x}^2dx-c_2\Re\left(\int_{l_1}^L\theta_5'y_x\overline{y}dx\right)=-\Re \left(\la^{-\ell}\int_{l_1}^L(i\la g^5+d_1g^5+g^6)\theta_5\overline{y}dx\right). 	
\end{equation}
It is know that $\|\mathcal{G}\|_{\mathcal{H}_{PE}}=o(1)$ implies that $(\la y)$ is uniformly bounded in $L^2(l_1,L)$ by $\eqref{PE-EXP1}_{3}$ and $y_x$  is uniformly bounded in $L^2(l_1,L)$. Therefore, by Cauchy-Schwarz inequality, \eqref{PE-Lemma1-EQ1},  and the definition of $\theta_5$, $\|U\|_{\mathcal{H}_{PE}}=1$,
$$
\begin{array}{l}
\displaystyle
\left|\Re\left(\int_{l_1}^L\theta_5'y_x\overline{y}dx\right)\right|=\left|\frac{1}{2}\int_{l_1}^{L}\theta_5'(\abs{y}^2)_xdx\right|=\left|\frac{1}{2}\int_{l_1}^{L}\theta_5{''}\abs{y}^2dx\right|=o(\la^{-\ell - 2}),\\[0.1in]
\displaystyle
\text{and}\quad \left|\la^{-\ell} \int_{l_1}^L(i\la g^5+d_1g^5+g^6)\theta_5\overline{y}dx\right|=o(\la^{-\ell}).
\end{array}
$$
Substituting the above estimates into \eqref{PE-Step2-Lemma1-EQ1} and using \eqref{PE-Lemma1-EQ1} lead to
$$
\int_{l_1}^L\theta_5\abs{y_x}^2dx=o(\la^{-\ell}).
$$
Finally, by the definition of $\theta_5,$ the  desired result  \eqref{PE-Lemma1-EQ2} is obtained.
\end{proof}

\begin{lemma}\label{PE-Lemma2}
The solution $(v,z,p,q,y,y^1)$ of the system \eqref{PE-EXP1}-\eqref{PE-EXP2} satisfies the following estimates
\begin{equation}\label{PE-Lemma2-EQ1}
\int_{l_1}^{L}\left(\abs{\la y}^2+c_2\abs{y_x}^2\right)dx=o(\la^{-\frac{\ell}{2}}),
\end{equation}
\begin{equation}\label{PE-Lemma2-EQ2}
\abs{\la v(l_1)}^2=o(\la^{-\frac{\ell}{2}}),\quad |v_x(l_1)|^2=o(\la^{-\frac{\ell}{2}})\quad \text{and}\quad |p_x(l_1)|^2=o(\la^{-\frac{\ell}{2}}). 	
\end{equation}
\end{lemma}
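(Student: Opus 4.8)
\noindent The plan is to propagate the localized bounds of Lemma~\ref{PE-Lemma1} to the whole elastic interval $(l_1,L)$ by a multiplier argument applied to the reduced equation \eqref{PE-Combining1}, and then to read off the traces at $l_1$ from the transmission conditions of Remark~\ref{NewCT-PE} together with the continuity condition $v(l_1)=y(l_1)$.

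For \eqref{PE-Lemma2-EQ1}, I would fix $\phi\in C^1([l_1,L])$ with $\phi(l_1)=\phi(L)=0$ and $\phi'(x)\ge c_0>0$ on $[l_1,L]\setminus D_{\epsilon}$ (such a $\phi$ exists, since it is only forced to change sign on $D_{\epsilon}$, where every quantity is already controlled), multiply \eqref{PE-Combining1} by $2\phi\overline{y_x}$, integrate over $(l_1,L)$ and take real parts. Since $\phi$ vanishes at both endpoints, all boundary terms disappear and
\[
\int_{l_1}^{L}\phi'\big(\abs{\la y}^2+c_2\abs{y_x}^2\big)dx=-2\Re\int_{l_1}^{L}\phi\Big(i\la d_1 y+\la^{-\ell}(-i\la g^5-d_1g^5-g^6)\Big)\overline{y_x}\,dx.
\]
On the right, the damping term is $\le 2\norm{\phi}_{\infty}\norm{d_1}_{\infty}^{1/2}\big(\int_{l_1}^{L}d_1\abs{y}^2\big)^{1/2}\norm{y_x}_{L^2(l_1,L)}=o(\la^{-\ell/2})$, using the second estimate of \eqref{PE-Lemma1-EQ1} and the a priori bound $\norm{y_x}_{L^2(l_1,L)}=O(1)$ (from $\norm{U}_{\mathcal H_{PE}}=1$); the $\la^{-\ell}(-i\la g^5)$-term is integrated by parts once more (its boundary contributions again vanish), trading a power of $\la$ for a derivative and leaving $o(\la^{-\ell})$; the other terms are $o(\la^{-\ell})$. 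On the left, I would split $[l_1,L]=D_{\epsilon}\cup([l_1,L]\setminus D_{\epsilon})$, bounding $\phi'$ by $\norm{\phi'}_{\infty}$ on $D_{\epsilon}$ together with $\int_{D_{\epsilon}}(\abs{\la y}^2+c_2\abs{y_x}^2)dx=o(\la^{-\ell})$ (from \eqref{PE-Lemma1-EQ1} and \eqref{PE-Lemma1-EQ2}), and using $\phi'\ge c_0$ on $[l_1,L]\setminus D_{\epsilon}$; combining yields \eqref{PE-Lemma2-EQ1}.

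For \eqref{PE-Lemma2-EQ2}, I would repeat the computation with $\psi\in C^1([l_1,L])$, $\psi(l_1)=1$, $\psi(L)=0$. Because $\psi(l_1)\neq 0$ and $y(L)=0$, the integration by parts now leaves the boundary term $-\big(\abs{\la y(l_1)}^2+c_2\abs{y_x(l_1)}^2\big)$, so that
\[
\abs{\la y(l_1)}^2+c_2\abs{y_x(l_1)}^2=-\int_{l_1}^{L}\psi'\big(\abs{\la y}^2+c_2\abs{y_x}^2\big)dx-2\Re\int_{l_1}^{L}\psi\Big(i\la d_1 y+\la^{-\ell}(-i\la g^5-d_1g^5-g^6)\Big)\overline{y_x}\,dx.
\]
The first term is $o(\la^{-\ell/2})$ by \eqref{PE-Lemma2-EQ1}; the damping term is now $o(\la^{-3\ell/4})$, using the improved bound $\norm{y_x}_{L^2(l_1,L)}=o(\la^{-\ell/4})$; in the $g^5$-term the only surviving boundary contribution after integration by parts, $2\la^{-\ell}g^5(l_1)\overline{y(l_1)}$, is handled by Young's inequality, $\le\tfrac12\abs{\la y(l_1)}^2+C\la^{-2\ell}\abs{g^5(l_1)}^2$, the first summand being absorbed into the left-hand side and $\abs{g^5(l_1)}\le\norm{g^5_x}_{L^1(l_1,L)}=o(1)$; the rest is of lower order. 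Hence $\abs{\la y(l_1)}^2=o(\la^{-\ell/2})$ and $\abs{y_x(l_1)}^2=o(\la^{-\ell/2})$, and then $v(l_1)=y(l_1)$ together with Remark~\ref{NewCT-PE} ($\alpha_1 v_x(l_1)=c_2y_x(l_1)$ and $\alpha_1 p_x(l_1)=c_2\gamma y_x(l_1)$) gives the three estimates in \eqref{PE-Lemma2-EQ2}.

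The main obstacle is the loss of the factor $\la^{\ell/2}$: Lemma~\ref{PE-Lemma1} delivers bounds of order $o(\la^{-\ell})$, but in \eqref{PE-Combining1} the damping term $i\la d_1 y$, paired against $y_x$ for which only $\norm{y_x}_{L^2(l_1,L)}=O(1)$ is available at the first step, can only be estimated by $o(\la^{-\ell/2})$, and everything downstream must carry this loss. The accompanying technical subtlety is that the terms $\la^{-\ell}(-i\la g^5)\overline{y_x}$ carry an extra power of $\la$, so they have to be tamed by an additional integration by parts (using $g^5(L)=0$, and in the first step also $\phi(l_1)=\phi(L)=0$, so that the unwanted boundary terms vanish) and by the Young absorption of the boundary term $\abs{\la y(l_1)}^2$.
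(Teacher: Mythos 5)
Your argument is correct and is essentially the paper's own proof: the paper first derives the same multiplier identity for a general $h_1\in C^1([l_1,L])$ and then specializes to $h_1(x)=(x-l_1)\theta_6(x)+(x-L)\theta_7(x)$ (your $\phi$, vanishing at both ends with $h_1'=1$ off the damped region) to get \eqref{PE-Lemma2-EQ1}, and to $h_1(x)=x-L$ (your $\psi$) to isolate the trace terms at $l_1$, which are then handled exactly as you do, via Young's inequality, absorption of $\tfrac12\abs{\la y(l_1)}^2$, $\abs{g^5(l_1)}=o(1)$, the continuity $v(l_1)=y(l_1)$ and Remark \ref{NewCT-PE}. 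Only cosmetic slips: in your displayed bounds for the damping and boundary terms a factor $\la$ is dropped (one should pair $\sqrt{d_1}\,\la y$ with $\sqrt{d_1}\,y_x$, and $\la^{-\ell}g^5(l_1)$ with $\la y(l_1)$), but your cited estimates and subsequent Young step show you intend the correct quantities, so the conclusions stand.
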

\begin{proof}
The proof is split into three steps.\\
\textbf{Step 1.} Letting $h_1\in C^1([l_1,L])$, the  following estimate is targeted to prove
\begin{equation}\label{PE-Step1-Lemma2-EQ1}
\begin{array}{l}
\displaystyle
-\int_{l_1}^Lh_1'\left(\abs{\la y}^2+c_2\abs{y_x}^2\right)dx+\ma{h_1(L)\left(\abs{\la y(L)}^2+c_2\abs{y_x(L)}^2\right)}\\
\displaystyle
-h_1(l_1)\left(\abs{\la y(l_1)}^2+c_2\abs{y_x(l_1)}^2\right)= 2 \Re\left(i\la^{1-\ell} h_1(l_1)g^5(l_1)\overline{y}(l_1)\right)+o(\la^{-\frac{\ell}{2}}).
\end{array}	
\end{equation}
First, multiply $\eqref{PE-Combining1}$ by $2 h_1 \overline{y}_x$ and integrate over $(l_1,L)$ to get
\begin{equation*}
\int_{l_1}^Lh_1	\left((\abs{\la y}^2)_x+c_2(\abs{y_x}^2)_x\right)dx-2\Re\left(i\la \int_{l_1}^Ld_1h_1y\overline{y_x}dx\right)=-2\Re\left(\la^{-\ell}\int_{l_1}^L(i\la g^5+d_1g^5+g^6)h_1\overline{y}_xdx\right).
\end{equation*}
As the Integration by parts is implemented,
\begin{equation}\label{PE-Step1-Lemma2-EQ2}
\begin{array}{l}
\displaystyle
-\int_{l_1}^Lh_1'\left(\abs{\la y}^2+c_2\abs{y_x}^2\right)dx+h_1(L)\left(\abs{\la y(L)}^2+c_2\abs{y_x(L)}^2\right)-h_1(l_1)\left(\abs{\la y(l_1)}^2+c_2\abs{y_x(l_1)}^2\right)\\
\displaystyle
-2\Re\left(i\la \int_{l_1}^Ld_1h_1y\overline{y_x}dx\right)=-2 \Re\left(\la^{-\ell}\int_{l_1}^L(d_1g^5+g^6)h\overline{y_x}dx\right)+ 2 \Re\left(i\la^{1-\ell}\int_{l_1}^L(h_1 g^5)_x\overline{y}dx\right)\\
\displaystyle
+ 2 \Re\left(i\la^{1-\ell} h_1(l_1)g^5(l_1)\overline{y}(l_1)\right).
\end{array}
\end{equation}
Since $y_x$ is uniformly bounded in $L^2(l_1,L)$ and $\|\mathcal{G}\|_{\mathcal{H}_{PE}}=o(1),$ by Cauchy-Schwarz inequality and \eqref{PE-Lemma1-EQ1}, the following estimates are immediate
\begin{eqnarray}
\nonumber
&&\left|-2\Re\left(i\la \int_{l_1}^Ld_1h_1y\overline{y_x}dx\right)\right|=o(\la^{-\frac{\ell}{2}}),\quad \left|\Re\left(\la^{-\ell}\int_{l_1}^L(d_1g^5+g^6)h\overline{y_x}dx\right)\right|=o(\la^{-\ell}),\\
&& \left|\Re\left(i\la\int_{l_1}^L(hg^5)_x\overline{y}dx\right)\right|=o(\la^{-\ell}).
\end{eqnarray}
Finally, substituting these in in \eqref{PE-Step1-Lemma2-EQ2} lead to the desired equation \eqref{PE-Step1-Lemma2-EQ1}.\\
\textbf{Step 2.} For this step, \eqref{PE-Lemma2-EQ1} is aimed to be proved. First, define the following cut-off functions $\theta_6, \theta_7\in C^2([l_1,L])$ by
\begin{equation*}
\theta_6(x)=\left\{\begin{array}{lll}
1&\text{if}&x\in [l_1,a_1+\epsilon],\\
0&\text{if}&x\in [a_2-\epsilon,L]	
\end{array}
\right.	\quad \text{and}\quad \theta_7(x)=\left\{\begin{array}{lll}
0&\text{if}&x\in [l_1,a_1+\epsilon],\\
1&\text{if}&x\in [a_2-\epsilon,L].		
\end{array}
\right.
\end{equation*}
so that $0\leq \theta_6,\theta_7\leq 1$ for all $x\in [l_1,L]$.

Following the the same arguments as in \ma{Lemma \ref{Lemma5}, take $h_1(x)=(x-l_1)\theta_6(x)+(x-L)\theta_7(x)$ in \eqref{PE-Step1-Lemma2-EQ1}. This,  combined with  Lemma \ref{PE-Lemma1}}, results in  \eqref{PE-Lemma2-EQ1}.\\
\textbf{Step 3.} Finally, to prove \eqref{PE-Lemma2-EQ2}, take $h_1(x)=x-L$ in \eqref{PE-Step1-Lemma2-EQ1} and use \eqref{PE-Lemma2-EQ1} to obtain
\begin{equation*}\label{PE-Step3-EQ1}
\abs{\la y(l_1)}^2+c_2\abs{y_x(l_1)}^2 = 2 \Re\left(-i\la^{1-\ell} g^5(l_1)\overline{y}(l_1)\right)+o(\la^{-\frac{\ell}{2}}).	
\end{equation*}
 It follows from Young's inequality that
\begin{equation*}
\abs{\la y(l_1)}^2+c_2\abs{y_x(l_1)}^2\leq 2 \abs{\la^{-\ell}g^5(l_1)}\abs{\la y(l_1)}+o(\la^{-\frac{\ell}{2}})\leq 2 \la^{-2\ell}\abs{g^5(l_1)}^2+\frac{1}{2}\abs{\la y(l_1)}^2+o(\la^{-\frac{\ell}{2}}).
\end{equation*}
Now, use  $g^5\in H^1_R(l_1,L)\subset C([l_1,L])$ and $\|\mathcal{G}\|_{\mathcal{H}_{PE}}=o(1)$ to obtain $\abs{g^5(l_1)}=o(1)$. This, together with the estimate above,  provides the following estimate
 $$
 \frac{1}{2}\abs{\la y(l_1)}^2+c_2\abs{y_x(l_1)}^2\leq o(\la^{-\frac{\ell}{2}}).
 $$
Hence, \eqref{PE-Lemma2-EQ2} is concluded from recalling Remark \eqref{NewCT-PE}.
\end{proof}

\noindent For the next result,  substitute $\eqref{PE-EXP1}_1$ and $\eqref{PE-EXP1}_2$ in $\eqref{PE-EXP2}_1$ and $\eqref{PE-EXP2}_2$,  respectively, so that the following system is obtained,
\begin{equation}\label{PE-Combining2}
\left\{\begin{array}{l}
\rho \la^2 v+\alpha v_{xx}-\gamma\beta p_{xx}=-\la^{-\ell}\left(\rho g^2+i\la \rho g^1\right)\\
\mu \la^2 p+\beta p_{xx}-\gamma\beta v_{xx}=-\la^{-\ell}(\mu g^4+i\la \mu g^3). 	
\end{array}
\right. 	
\end{equation}
\begin{lemma}\label{PE-Lemma3}
The solution $(v,z,p,q,y,y^1)$ of the system \eqref{PE-EXP1}-\eqref{PE-EXP2} satisfies the following estimate
\begin{equation}\label{PE-Lemma3-EQ1}
\rho\int_0^{l_1}\abs{\la v}^2dx+\mu\int_0^{l_1}\abs{\la p}^2+\alpha_1\int_0^{l_1}\abs{v_x}^2dx+\beta\int_0^{l_1}\abs{\gamma v_x-p_x}^2dx-2\mu l_1\abs{\la p(l_1)}^2\leq o(\la^{-\frac{\ell}{2}}).
\end{equation}
\end{lemma}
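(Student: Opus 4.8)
The plan is to run the classical ``$x\,u_x$'' multiplier on the piezoelectric pair \eqref{PE-Combining2}, in the same spirit as in the proof of Lemma \ref{Lemma7}, but this time retaining the boundary contributions at $x=l_1$. First I would multiply $\eqref{PE-Combining2}_1$ by $2x\overline{v_x}$ and $\eqref{PE-Combining2}_2$ by $2x\overline{p_x}$, integrate over $(0,l_1)$ (legitimate since $v,p\in H^2(0,l_1)$), add the two identities and take real parts. Using $\Re\int_0^{l_1}2xw\overline{w_x}\,dx=l_1\abs{w(l_1)}^2-\int_0^{l_1}\abs{w}^2\,dx$ for $w\in\{v,v_x,p,p_x\}$ (the term at $0$ vanishing thanks to $v(0)=p(0)=0$ and to the factor $x$), together with the identity $\Re\big(p_{xx}\overline{v_x}+v_{xx}\overline{p_x}\big)=\big(\Re(p_x\overline{v_x})\big)_x$ for the mixed contributions, and regrouping via $\alpha=\alpha_1+\gamma^2\beta$ (see \eqref{alpha1}) so that $\gamma^2\beta\abs{v_x}^2+\beta\abs{p_x}^2-2\gamma\beta\Re(p_x\overline{v_x})=\beta\abs{\gamma v_x-p_x}^2$ both inside the integrals and at $x=l_1$, the left-hand side collapses to
\begin{equation*}
\rho\int_0^{l_1}\abs{\la v}^2+\mu\int_0^{l_1}\abs{\la p}^2+\alpha_1\int_0^{l_1}\abs{v_x}^2+\beta\int_0^{l_1}\abs{\gamma v_x-p_x}^2=l_1\Big[\rho\abs{\la v(l_1)}^2+\mu\abs{\la p(l_1)}^2+\alpha_1\abs{v_x(l_1)}^2+\beta\abs{\gamma v_x(l_1)-p_x(l_1)}^2\Big]-\mathcal{R},
\end{equation*}
where $\mathcal{R}$ gathers the source terms $\Re\int_0^{l_1}2x\big({-\la^{-\ell}}(\rho g^2+i\la\rho g^1)\big)\overline{v_x}\,dx+\Re\int_0^{l_1}2x\big({-\la^{-\ell}}(\mu g^4+i\la\mu g^3)\big)\overline{p_x}\,dx$.

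Next I would discard the boundary block on the right: by \eqref{PE-Lemma2-EQ2} in Lemma \ref{PE-Lemma2} one has $\abs{\la v(l_1)}^2=o(\la^{-\ell/2})$, $\abs{v_x(l_1)}^2=o(\la^{-\ell/2})$ and $\abs{p_x(l_1)}^2=o(\la^{-\ell/2})$, hence also $\abs{\gamma v_x(l_1)-p_x(l_1)}^2=o(\la^{-\ell/2})$; only $\mu l_1\abs{\la p(l_1)}^2$ is not yet under control, so I transfer it to the left-hand side. Then I would estimate $\mathcal{R}$: the $g^2$- and $g^4$-parts are $o(\la^{-\ell})$ by Cauchy--Schwarz and the uniform $L^2$-bounds on $v_x,p_x$ coming from $\|U\|_{\mathcal{H}_{PE}}=1$. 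For the $i\la\rho g^1$- and $i\la\mu g^3$-parts I would integrate by parts once more to move the factor $\la$ from $v_x$ onto $v$, resp. from $p_x$ onto $p$; the resulting interior integrals are $o(\la^{-\ell})$ since $\|g^1_x\|_{L^2},\|g^3_x\|_{L^2}=o(1)$ while $\|\la v\|_{L^2},\|\la p\|_{L^2}=O(1)$ by $\eqref{PE-EXP1}_{1}$ and $\eqref{PE-EXP1}_{2}$, and what is left are the two boundary terms $2\rho l_1\la^{-\ell}g^1(l_1)\overline{\la v(l_1)}$ and $2\mu l_1\la^{-\ell}g^3(l_1)\overline{\la p(l_1)}$ at $x=l_1$.

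The first of these is harmless: since $g^1\in H^1_L(0,l_1)$ we get $\abs{g^1(l_1)}\le\|g^1_x\|_{L^1}=o(1)$, which combined with $\abs{\la v(l_1)}^2=o(\la^{-\ell/2})$ makes it $o(\la^{-\ell/2})$. The only genuine obstacle is the second one, $2\mu l_1\la^{-\ell}g^3(l_1)\overline{\la p(l_1)}$, because $\abs{\la p(l_1)}$ has not been estimated at this stage. The resolution is Young's inequality, $2\mu l_1\la^{-\ell}\abs{g^3(l_1)}\abs{\la p(l_1)}\le\mu l_1\la^{-2\ell}\abs{g^3(l_1)}^2+\mu l_1\abs{\la p(l_1)}^2$: here $\abs{g^3(l_1)}=o(1)$ turns the first summand into $o(\la^{-2\ell})=o(\la^{-\ell/2})$, while the second is absorbed into the left-hand side. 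Together with the $\mu l_1\abs{\la p(l_1)}^2$ already moved over from the boundary block, this is exactly what produces the coefficient $-2\mu l_1$ in front of $\abs{\la p(l_1)}^2$ in \eqref{PE-Lemma3-EQ1}; collecting the remaining contributions, all $o(\la^{-\ell/2})$ for $\ell\ge0$, finishes the proof. I expect this Young step to be the crux: it is what both forces the $-2\mu l_1\abs{\la p(l_1)}^2$ term into the statement and defers the control of $\abs{\la p(l_1)}$ to a subsequent lemma.
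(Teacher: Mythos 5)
Your proposal is correct and follows essentially the same route as the paper's proof: the multipliers $2x\overline{v_x}$, $2x\overline{p_x}$ on \eqref{PE-Combining2}, the boundary control at $x=l_1$ via \eqref{PE-Lemma2-EQ2}, the extra integration by parts on the $i\la g^1$, $i\la g^3$ source terms, and the final Young step on $\la^{-\ell}g^3(l_1)\overline{\la p(l_1)}$ that doubles the $\mu l_1\abs{\la p(l_1)}^2$ term are exactly the paper's steps (you merely add the two identities before estimating and use the combined identity $\Re(p_{xx}\overline{v_x}+v_{xx}\overline{p_x})=(\Re(p_x\overline{v_x}))_x$, which is only a cosmetic difference). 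Your closing remark is also accurate: the $-2\mu l_1\abs{\la p(l_1)}^2$ is precisely the deferred quantity handled later in \eqref{PE-Lemma6-EQ1}.
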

\begin{proof}
First, use the multipliers $-2x\overline{v}_x$ and $-2x\overline{p}_x$  for $\eqref{PE-Combining2}_1$ and $\eqref{PE-Combining2}_2,$ respectively,  and integrate over $(0,l_1)$ to get
\begin{equation*}
-\rho\int_{0}^{l_1}x(\abs{\la v}^2)_xdx-\alpha \int_{0}^{l_1}x(\abs{v_x}^2)_xdx+2\gamma\beta\Re\left(\int_0^{l_1}xp_{xx}\overline{v}_xdx\right)=2\Re\left(\la^{-\ell}\int_0^{l_1}x\left(\rho g^2+i\la\rho g^1\right)\overline{v}_xdx\right),
\end{equation*}
\begin{equation*}
-\mu\int_{0}^{l_1}x(\abs{\la p}^2)_xdx-\beta \int_{0}^{l_1}x(\abs{p_x}^2)_xdx+2\gamma\beta\Re\left(\int_0^{l_1}xv_{xx}\overline{p}_xdx\right)=2\Re\left(\la^{-\ell}\int_0^{l_1}x(\mu g^4+i\la \mu g^3)\overline{p}_xdx\right).
\end{equation*}
Next, integrate by parts  the identities \ma{identities} above, and use \eqref{PE-Lemma2-EQ2} and $\alpha=\alpha_1+\gamma^2\beta$,
\begin{equation}\label{PE-Lemma3-EQ2}
\begin{array}{l}
\displaystyle
\rho\int_0^{l_1}\abs{\la v}^2dx+\alpha_1\int_0^{l_1}\abs{v_x}^2dx+\gamma^2\beta \int_0^{l_1}\abs{v_x}^2dx-2\gamma\beta\Re\left(\int_0^{l_1}p_{x}\overline{v}_x dx\right)\\
\displaystyle
-2\gamma\beta\Re\left(\int_0^{l_1}xp_{x}\overline{v}_{xx}dx\right)=2\rho\Re\left(\la^{-\ell}\int_0^{l_1}xg^2\overline{v}_xdx\right)-2\Re\left(i\la^{1-\ell}\rho \int_0^{l_1}\left(x g^1\right)_x\overline{v} dx\right)+o(\la^{-\frac{\ell}{2}}),
\end{array}
\end{equation}
\begin{equation}\label{PE-Lemma3-EQ3}
\begin{array}{l}
\displaystyle  \mu\int_0^{l_1}\abs{\la p}^2 dx-\mu \abs{\la p(l_1)}^2+\beta \int_{0}^{l_1}\abs{p_x}^2 dx+2\gamma\beta\Re\left(\int_0^{l_1}xv_{xx}\overline{p}_x dx\right)=\\
\displaystyle
2\mu\Re\left(\la^{-\ell}\int_0^{l_1}xg^4\overline{p}_x dx\right)-2\Re\left(i\la^{1-\ell}\mu\int_0^{l_1}\left(x g^3\right)_x\overline{p} dx\right)+2\Re\left(i\la^{1-\ell} l_1\mu  g^3(l_1) \overline{p}(l_1)\right)+o(\la^{-\frac{\ell}{2}}).
\end{array}	
\end{equation}
Since $\|\mathcal{G}\|_{\mathcal{H}_{PE}}=o(1),$ and $v_x, p_x, \la v, \la p$ are uniformly bounded in $L^2(0,l_1),$
\begin{equation*}
\begin{array}{l}
\displaystyle
\left|\Re\left(\la^{-\ell}\int_0^{l_1}xg^2\overline{v}_x dx\right)\right|=o(\la^{-\ell}),\quad  \left|2\Re\left(i\la^{1-\ell}\rho \int_0^{l_1}\left(x g^1\right)_x\overline{v} dx\right)\right|=o(\la^{-\ell}),\\[0.1in]
\displaystyle
\left|\Re\left(\la^{-\ell}\int_0^{l_1}xg^4\overline{p}_x dx\right)\right|=o(\la^{-\ell}),\quad \left|\Re\left(i\la^{{1-\ell}} \mu\int_0^{l_1}\left(x g^3\right)_x\overline{p} dx\right)\right|=o(\la^{-\ell}).
\end{array}
\end{equation*}
Thus, \eqref{PE-Lemma3-EQ2} and \eqref{PE-Lemma3-EQ3}, together with the last two estimates, reduce to
\begin{equation}\label{PE-Lemma3-EQ4}
\begin{array}{l}
\displaystyle
\rho\int_0^{l_1}\abs{\la v}^2dx+\mu\int_0^{l_1}\abs{\la p}^2+\alpha_1\int_0^{l_1}\abs{v_x}^2dx+\beta\int_0^{l_1}\abs{\gamma v_x-p_x}^2 dx\\
\displaystyle
-\mu l_1\abs{\la p(l_1)}^2=2\Re\left(i\la^{1-\ell} l_1\mu  g^3(l_1) \overline{p}(l_1)\right)+o(\la^{-\frac{\ell}{2}}).
\end{array}	
\end{equation}
On the other hand, since $g^3 \in H^1_R(0,l_1)\subset C([0,l_1])$ and $\|\mathcal{G}\|_{\mathcal{H}_{PE}}=o(1)$, by Young's inequality
$$
\left|2\Re\left(i\la^{1-\ell} l_1\mu  g^3(l_1) \overline{p}(l_1)\right)\right|\leq l_1\mu \abs{\la p(l_1)}^2+l_1\mu\la^{-2\ell}\abs{g^3(l_1)}^2\leq l_1\mu \abs{\la p(l_1)}^2+o(\la^{-2\ell}).
$$
Finally, substituting the estimate above in \eqref{PE-Lemma3-EQ4} lead to \eqref{PE-Lemma3-EQ1}.
\end{proof}

$\newline$
\noindent For the next result,  another form of  \eqref{PE-Combining2}  is needed by considering $\alpha=\alpha_1+\gamma^2\beta$:
\begin{equation}\label{PE-Combining3}
\left\{\begin{array}{l}
\la^2\rho v+\alpha_1v_{xx}+\gamma\mu \la^2p=\la^{-\ell}G^1+i\la^{1-\ell} G^2,\\
\la^2\mu\alpha p+\alpha_1\beta p_{xx}+\rho\gamma\beta\la^2v=\la^{-\ell}G^3+i\la^{1-\ell} G^4,
\end{array}
\right.	
\end{equation}
where
\begin{equation}\label{G1234}
\begin{array}{l}
\displaystyle
G^1=-\left(\rho g^2+\gamma\mu g^4\right),\quad G^2=-\left(\rho g^1+\gamma\mu g^3\right),\\
\displaystyle
G^3=-\left(\alpha\mu g^4+\rho\gamma\beta g^2\right),\quad G^4=-\left(\alpha \mu g^3+\rho\gamma\beta g^1\right).
\end{array}
\end{equation}

\begin{lemma}\label{PE-Lemma4}
The solution $(v,z,p,q,y,y^1)$ of the system \eqref{PE-EXP1}-\eqref{PE-EXP2} satisfies the following asymptotic estimate
\begin{equation}\label{PE-Lemma4-EQ1}
\begin{array}{l}
\displaystyle
e_1(l_1) p(l_1)=-e_2(l_1) o(\la^{-(\frac{\ell}{4}+1)})-e_3(l_1) o(\la^{-\frac{\ell}{4}})-e_4(l_1) o(\la^{-\frac{\ell}{4}})\\
\displaystyle
-\la^{-\ell}\int_0^{l_1}\left(e_3(s)G^1(s)+e_4(s)G^3(s)\right)ds-i\la^{1-\ell} \int_0^{l_1}\left(e_3(s)G^2(s)+e_4(s)G^4(s)\right)ds,
\end{array}
\end{equation}
\begin{equation}\label{PE-Lemma4-EQ2}
\begin{array}{l}
\displaystyle
e_5(l_1) p(l_1)= -e_6(l_1) o(\la^{-(\frac{\ell}{4}+1)})-e_7(l_1) o(\la^{-\frac{\ell}{4}})-e_8(l_1) o(\la^{-\frac{\ell}{4}})\\
\displaystyle
-\la^{-\ell}\int_0^{l_1}\left(e_7(s)G^1(s)+e_8(s)G^3(s)\right)ds-i\la^{1-\ell} \int_0^{l_1}\left(e_7(s)G^2(s)+e_8(s)G^4(s)\right)ds,
\end{array}
\end{equation}
with
\begin{equation}\label{ei}
\left\{\begin{array}{l}
\displaystyle
e_1(s)=\frac{\cos(s \varkappa_+)-\cos(s \varkappa_-)}{b_+-b_-},\quad e_2(s)=\frac{b_+\cos(s \varkappa_-)-b_-\cos(s \varkappa_+)}{b_+-b_-},\\
\displaystyle
e_3(s)=-\frac{b_+\varkappa_+\sin(s\varkappa_-)-b_-\varkappa_-\sin(s\varkappa_+)}{\varkappa_- \varkappa_+(b_+-b_-)},\quad e_4(s)=-\frac{\varkappa_-\sin(s\varkappa_+)-\varkappa_+\sin(s\varkappa_-)}{\varkappa_-\varkappa_+(b_+-b_-)},\\
\displaystyle
e_5(s)=\frac{b_+\cos(s\varkappa_+)-b_-\cos(z\varkappa_-)}{b_+-b_-},\quad e_6(s)=-b_+b_-\frac{\cos(s \varkappa_+)-\cos(z \varkappa_-)}{b_+-b_-},\\
\displaystyle
e_7(s)=-\frac{b_+b_-\left(\varkappa_+\sin(s\varkappa_-)-\varkappa_-\sin(s\varkappa_+)\right)}{\varkappa_-\varkappa_+(b_+-b_-)},\quad e_8(s)=-\frac{b_+\varkappa_-\sin(s\varkappa_+)-b_-\varkappa_+\sin(s\varkappa_-)}{\varkappa_-\varkappa_+(b_+-b_-)},
\end{array}\right.
\end{equation}
and
\begin{equation}\label{kappa+-b+-}
\varkappa_-=\lambda \sigma_-\quad \varkappa_+=\lambda \sigma_+,\quad b_+=\frac{\alpha_1\varkappa_+^2-\la^2\rho}{\gamma \mu \la^2}\quad \text{and}\quad b_-= \frac{\alpha_1\varkappa_-^2-\la^2\rho}{\gamma \mu \la^2},
\end{equation}
where $\sigma_+$ and $\sigma_-$ defined in \eqref{sigma+-}.
\end{lemma}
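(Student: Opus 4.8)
For each fixed $\la\neq0$, system \eqref{PE-Combining3} is a constant-coefficient linear second-order system for the pair $(v,p)$ on $(0,l_1)$, which I would solve explicitly and then read off $p(l_1)$ from the two Dirichlet conditions $v(0)=p(0)=0$. Recall from the proof of Theorem \ref{SS-PE} that the associated characteristic polynomial has the four simple roots $\pm i\varkappa_\pm$ with $\varkappa_\pm=\la\sigma_\pm$, and that the modes of the homogeneous system are $(v,p)=(1,b_\pm)$ times $e^{\pm i\varkappa_\pm x}$, where $b_\pm$ is given in \eqref{kappa+-b+-}; a short computation gives $b_\pm=(\alpha_1\sigma_\pm^2-\rho)/(\gamma\mu)$, so $b_+$ and $b_-$ are \emph{constants}, independent of $\la$, with $b_+\neq b_-$. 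Hence the change of unknowns $\phi_+=\frac{p-b_-v}{b_+-b_-}$, $\phi_-=\frac{b_+v-p}{b_+-b_-}$ decouples \eqref{PE-Combining3} into two scalar Helmholtz equations $\phi_{\pm,xx}+\varkappa_\pm^2\phi_\pm=\widetilde H_\pm$, where $\widetilde H_\pm$ is the corresponding linear combination of the right-hand sides $\la^{-\ell}G^1+i\la^{1-\ell}G^2$ and $\la^{-\ell}G^3+i\la^{1-\ell}G^4$ (here one uses $\alpha=\alpha_1+\gamma^2\beta$ and the root-coefficient relations for $q_0$).

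Next I would write each $\phi_\pm$ by variation of parameters based at the endpoint $x=l_1$, i.e. in terms of $\cos(\varkappa_\pm(x-l_1))$, $\sin(\varkappa_\pm(x-l_1))$ and the kernel $\varkappa_\pm^{-1}\sin(\varkappa_\pm(x-s))$. Reconstructing $v=\phi_++\phi_-$, $p=b_+\phi_++b_-\phi_-$ and their derivatives and evaluating at $x=l_1$ expresses the four integration constants in terms of $v(l_1),v_x(l_1),p(l_1),p_x(l_1)$; substituting these back and evaluating $v$ and $p$ at $x=0$ yields two scalar identities. From $v(0)=0$ one gets
\[
e_1(l_1)p(l_1)+e_2(l_1)v(l_1)+e_3(l_1)v_x(l_1)+e_4(l_1)p_x(l_1)+\mathcal E_v=0,
\]
and from $p(0)=0$ the analogous identity with $e_5,e_6,e_7,e_8$ and $\mathcal E_p$. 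The functions $e_1,\dots,e_8$ of \eqref{ei}, evaluated at $s=l_1$, are exactly the combinations of $b_\pm$, $\varkappa_\pm$, $\cos(l_1\varkappa_\pm)$ and $\sin(l_1\varkappa_\pm)$ produced by Cramer's rule when one solves for the constants and collects the coefficients of $p(l_1)$, $v(l_1)$, $v_x(l_1)$, $p_x(l_1)$. Because the Duhamel kernel is translation invariant and $\sin$ is odd, the remainder terms $\mathcal E_v,\mathcal E_p$ (the particular solution evaluated at $x=0$) are integrals against the \emph{same} functions evaluated at the integration variable, namely $\mathcal E_v=\la^{-\ell}\int_0^{l_1}\big(e_3(s)G^1(s)+e_4(s)G^3(s)\big)ds+i\la^{1-\ell}\int_0^{l_1}\big(e_3(s)G^2(s)+e_4(s)G^4(s)\big)ds$, and likewise $\mathcal E_p$ with $e_7,e_8$ in place of $e_3,e_4$.

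Finally I would insert the boundary estimates of Lemma \ref{PE-Lemma2}: \eqref{PE-Lemma2-EQ2} gives $|v(l_1)|=o(\la^{-(\ell/4+1)})$, $|v_x(l_1)|=o(\la^{-\ell/4})$ and $|p_x(l_1)|=o(\la^{-\ell/4})$. Replacing $v(l_1),v_x(l_1),p_x(l_1)$ by these orders and moving everything except $e_1(l_1)p(l_1)$ (resp. $e_5(l_1)p(l_1)$) to the other side turns the two identities into exactly \eqref{PE-Lemma4-EQ1} and \eqref{PE-Lemma4-EQ2}. The only real work is computational bookkeeping: carrying out the Cramer's rule elimination so that the coefficients and the Duhamel kernels line up precisely with \eqref{ei}, and tracking the powers of $\la$ (note that $e_3,e_4,e_7,e_8$ carry a factor $\varkappa_\pm^{-1}=O(\la^{-1})$, which is why they are kept explicit rather than absorbed into the $o(\cdot)$). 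There is no genuine analytic obstacle in this lemma; the arithmetic nature of $\sigma_+/\sigma_-$ — in particular the non-degeneracy of $e_1(l_1)$ or $e_5(l_1)$ for the relevant frequencies — will be exploited only in the subsequent step, not here.
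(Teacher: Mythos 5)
Your proposal is correct and is essentially the paper's own argument: the paper rewrites \eqref{PE-Combining3} as the first-order system \eqref{PE-Lemma4-EQ3}, diagonalizes $N^{PE}$ (whose eigenmodes are exactly your $(1,b_\pm)e^{\pm i\varkappa_\pm x}$, with $b_\pm$ constant), and applies the variation-of-constants formula between $x=l_1$ and $x=0$, inserting the boundary data \eqref{PE-Lemma2-EQ2} at $l_1$ and the conditions $v(0)=p(0)=0$ to extract the two scalar identities. Your decoupling into the scalar Helmholtz equations for $\phi_\pm$ is the same diagonalization organized at the second-order level, and the bookkeeping you defer is precisely the explicit matrix exponential $E(s)$ computed in the paper, whose entries $E_{1j}(-s)$ and $E_{3j}(-s)$ are the functions $e_1,\dots,e_8$ of \eqref{ei}.
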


\begin{proof} Firstly, note that
\sn{\eqref{sigma+-} and \eqref{kappa+-b+-} directly imply that
\begin{equation}\label{PE-Lemma5-EQ2}
\left\{\begin{array}{l}
\displaystyle
b_+=\frac{\alpha\mu-\rho\beta+\sqrt{(\alpha\mu-\rho\beta)^2+4\gamma^2\beta^2\mu\rho}}{2\beta\gamma\mu}\neq 0,\quad b_-=\frac{\alpha\mu-\rho\beta-\sqrt{(\alpha\mu-\rho\beta)^2+4\gamma^2\beta^2\mu\rho}}{2\beta\gamma\mu}\neq 0,\\[0.1in]
\displaystyle
b_+-b_-=\frac{\sqrt{(\alpha\mu-\rho\beta)^2+4\gamma^2\beta^2\mu\rho}}{\beta\gamma\mu}\neq 0,\quad b_+b_-= - \frac{\rho}{\mu}\neq 0,\\[0.1in]
\displaystyle
\varkappa_+\varkappa_-=\la^2\sigma_+\sigma_-\neq 0,\quad \text{and}\quad  \frac{\varkappa_+}{\varkappa_-}=\frac{\sigma_+}{\sigma_-}\neq 0.
\end{array}
\right.	
\end{equation}
which, indeed, better explains the expressions of $e_i, i=1, \cdots, 8$ in \eqref{ei}.}

Let $U^{PE}=(v,v_x,p,p_x)^{\top}.$ By \eqref{PE-Lemma2-EQ2},
\begin{equation}\label{PE-Lemma4-U0Ul1}
U^{PE}(0)=(0,v_x(0),0,p_x(0)),\qquad U^{PE}(l_1)=\left(o(\la^{-(\frac{\ell}{4}+1)}),o(\la^{-\frac{\ell}{4}}),p(l_1),o(\la^{-\frac{\ell}{4}})\right)^{\top},	
\end{equation}
and therefore, the system \eqref{PE-Combining3} \ma{can} be written as
\begin{equation}\label{PE-Lemma4-EQ3}
U^{PE}_x=N^{PE}U^{PE}+G	
\end{equation}
where
\begin{equation}\label{NG}
N^{PE}=\begin{pmatrix}
0&1&0&0 \\
-\frac{\la^2\rho}{\alpha_1}&0&-\frac{\la^2\gamma\mu}{\alpha_1}&0\\
0&0&0&1\\
-\frac{\la^2\rho\gamma}{\alpha_1}&0&\frac{-\la^2\mu\alpha}{\alpha_1\beta}&0
\end{pmatrix}
\quad \text{and}\quad G=\begin{pmatrix}
0\\ \la^{-\ell} G^1+ i \la^{1-\ell} G^2\\ 0\\ \la^{-\ell} G^3 + i \la^{1-\ell} G^4	
\end{pmatrix}.
\end{equation}
Notice that the eigenvalues $\varkappa$ of the matrix $N^{PE}$ are the roots of the following characteristic equation
$$
\ma{\varsigma(\varkappa)=\frac{q(\varkappa)}{\alpha_1\beta}}
$$
\ma{where $q(\varkappa)$ is defined in \eqref{Carac-Eq}.} This characteristic equation has four distinct pure imaginary roots $i\varkappa_-,-i\varkappa_-,i\varkappa_+$, $-i\varkappa_+$ where $\varkappa_+$ and $\varkappa_-$ are defined in \eqref{kappa+-b+-}. Since the eigenvalues of $N^{PE}$ are simple, $N^{PE}$ is a diagonalizable matrix, i.e., $N^{PE}$ can be written as $N^{PE}=PN_1^{PE}P^{-1}$ such that
$$
P=\begin{pmatrix}
1&1&1&1\\
i\varkappa_+&-i\varkappa_+&i\varkappa_-&-i\varkappa_-\\
b_+&b_+&b_-&b_-\\
i\varkappa_+b_+&-i\varkappa_+b_+&i\varkappa_-b_-&-i\varkappa_-b_-	
\end{pmatrix},\quad N_1^{PE}=\begin{pmatrix}
i \varkappa_+&0&0&0\\
0&-i \varkappa_+&0&0\\
0&0&i \varkappa_-&0\\
0&0&0&i \varkappa_-
\end{pmatrix},
$$
and
$$
P^{-1}=\frac{1}{2(b_+-b_-)}\begin{pmatrix}
-b_-&\frac{i b_-}{\varkappa_+}&1&-\frac{i}{\varkappa_+}\\
-b_-&-\frac{i b_-}{\varkappa_+}&1&\frac{i}{\varkappa_+}\\
b_+&-\frac{i b_+}{\varkappa_-}&-1&\frac{i}{\varkappa_-}\\
b_+&\frac{i b_+}{\varkappa_-}&-1&-\frac{i}{\varkappa_-}	
\end{pmatrix}
$$
where $b_+$ and $b_-$ are defined in \eqref{kappa+-b+-}. Therefore, for all $s\in \mathbb{R}$,  $E(s):=e^{N^{PE}s}=Pe^{N_1^{PE}s}P^{-1}$, or equivalently,
\begin{equation}\label{PE-Lemma4-EQ4}
E(s):=e^{N^{PE}s}=\left(E_{ij}(s)\right)_{1\leq i\leq 4\
1\leq j\leq 4},	
\end{equation}
with
\begin{equation}\label{Eij}
\left\{\begin{array}{ll}
\displaystyle
E_{11}(s)= E_{22}(s)=\frac{b_+\cos(s \varkappa_-)-b_-\cos(s \varkappa_+)}{b_+-b_-},&\quad
E_{12}(s)= \frac{b_+\varkappa_+\sin(s\varkappa_-)-b_-\varkappa_-\sin(s\varkappa_+)}{\varkappa_- \varkappa_+(b_+-b_-)},\\
\displaystyle
E_{13}(s)=\frac{\cos(s \varkappa_+)-\cos(s \varkappa_-)}{b_+-b_-},&\quad E_{14}(s)=\frac{\varkappa_-\sin(s\varkappa_+)-\varkappa_+\sin(s\varkappa_-)}{\varkappa_-\varkappa_+(b_+-b_-)},\\
\displaystyle
E_{21}(s)=\frac{\varkappa_+b_-\sin(s\varkappa_+)-\varkappa_-b_+\sin(s\varkappa_-)}{b_+-b_-},&\quad
E_{23}(s)=\frac{\varkappa_-\sin(s\varkappa_-)-\varkappa_+\sin(s\varkappa_+)}{b_+-b_-},\\
\displaystyle
E_{32}(s)=\frac{b_+b_-\left(\varkappa_+\sin(s\varkappa_-)-\varkappa_-\sin(s\varkappa_+)\right)}{\varkappa_-\varkappa_+(b_+-b_-)},&\quad
E_{33}(s)=E_{44}(s)=\frac{b_+\cos(s\varkappa_+)-b_-\cos(s\varkappa_-)}{b_+-b_-},\\
\displaystyle
E_{34}(s)=\frac{b_+\varkappa_-\sin(s\varkappa_+)-b_-\varkappa_+\sin(s\varkappa_-)}{\varkappa_-\varkappa_+(b_+-b_-)},&\quad
E_{43}(s)=\frac{b_-\varkappa_-\sin(s\varkappa_-)-b_+\varkappa_+\sin(s\varkappa_+)}{b_+-b_-},\\
\displaystyle
E_{24}(s)=E_{13}(s),\quad E_{31}(s)=-b_+b_-E_{13}(s), \\
E_{41}(s)=-b_+ b_- E_{23}(s),\quad E_{42}(s)=-b_+ b_- E_{24}(s).
\end{array}
\right.
\end{equation}
By the classical arguments from the theory of ordinary differential equations, the solution of \eqref{PE-Lemma4-EQ3} is given by
$$
U^{PE}(x)=e^{N^{PE}(x-l_1)}U^{PE}(l_1)-\int_x^{l_1}e^{N^{PE}(x-s)}G(s)ds.
$$
with
$$
U^{PE}(0)=e^{-N^{PE}l_1}U^{PE}(l_1)-\int_0^{l_1}e^{-N^{PE}s}G(s)ds.
$$
Next, substitute \eqref{PE-Lemma4-U0Ul1} and \eqref{NG} and \eqref{Eij} into the above equation to obtain
$$
\begin{pmatrix}
0\\ v_x(0)\\ 0\\ p_x(0)
\end{pmatrix}=E(-l_1)\begin{pmatrix}
o(\la^{-(\frac{\ell}{4}+1)})\\ o(\la^{-\frac{\ell}{4}})\\ p(l_1)\\ o(\la^{-\frac{\ell}{4}})	
\end{pmatrix}+\int_0^{l_1}E(-s)\begin{pmatrix}
0\\ \la^{-\ell}G^1(s)+i\la^{1-\ell} G^2(s)\\ 0\\ \la^{-\ell}G^3(s)+i\la^{1-\ell} G^4(s)	
\end{pmatrix}
ds.
$$
This together with \eqref{Eij}  yields
\begin{equation*}
\begin{array}{l}
\displaystyle
E_{13}(-l_1)p(l_1)=-E_{11}(-l_1)o(\la^{-(\frac{\ell}{4}+1)})-E_{12}(-l_1)o(\la^{-\frac{\ell}{4}})-E_{14}(-l_1)o(\la^{-\frac{\ell}{4}})\\
\displaystyle
-\la^{-\ell}\int_0^{l_1}\left(E_{12}(-s)G^1(s)+E_{14}(-s)G^3(s)\right)ds-i\la^{1-\ell} \int_0^{l_1}\left(E_{12}(-s)G^2(s)+E_{14}(-s)G^4(s)\right) ds,	
\end{array}	
\end{equation*}
and
\begin{equation*}
\begin{array}{l}
E_{33}(-l_1)p(l_1)=-E_{31}(-l_1)o(\la^{-(\frac{\ell}{4}+1)})-E_{32}(-l_1)o(\la^{-\frac{\ell}{4}})-E_{34}(-l_1)o(\la^{-\frac{\ell}{4}})\\
\displaystyle
-\la^{-\ell}\int_0^{l_1}\left(E_{32}(-s)G^1(s)+E_{34}(-s)G^3(s)\right)ds-i\la^{1-\ell} \int_0^{l_1}\left(E_{32}(-s)G^2(s)+E_{34}(-s)G^4(s)\right) ds,
\end{array}
\end{equation*}
which, thus,  lead to \eqref{PE-Lemma4-EQ1} and \eqref{PE-Lemma4-EQ2}.
\end{proof}
\begin{lemma}\label{PE-Lemma5}
The solution $(v,z,p,q,y,y^1)$ of the system \eqref{PE-EXP1}-\eqref{PE-EXP2} satisfies the following asymptotic estimates
\begin{equation}\label{PE-Lemma5-EQ1}
\begin{array}{l}
\left(\cos(l_1\varkappa_+)-\cos(l_1\varkappa_-)\right)\la p(l_1)=o(\la^{-\frac{\ell}{4}}),\quad \left(b_+\cos(l_1\varkappa_+)-b_-\cos(l_1\varkappa_-)\right)\la p(l_1)=o(\la^{-\frac{\ell}{4}}).	\end{array}
\end{equation}
\end{lemma}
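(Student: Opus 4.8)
The plan is to extract \eqref{PE-Lemma5-EQ1} directly from the two representation identities \eqref{PE-Lemma4-EQ1} and \eqref{PE-Lemma4-EQ2} of Lemma \ref{PE-Lemma4} by a careful order count of their right-hand sides, followed by a multiplication by $\la$. First I would note that $e_1(l_1)=(b_+-b_-)^{-1}\bigl(\cos(l_1\varkappa_+)-\cos(l_1\varkappa_-)\bigr)$ and $e_5(l_1)=(b_+-b_-)^{-1}\bigl(b_+\cos(l_1\varkappa_+)-b_-\cos(l_1\varkappa_-)\bigr)$, and that $b_+-b_-$ is a fixed nonzero real constant (see \eqref{PE-Lemma5-EQ2}); hence it suffices to prove that \emph{both} right-hand sides of \eqref{PE-Lemma4-EQ1} and \eqref{PE-Lemma4-EQ2} are $o\!\left(\la^{-(\frac{\ell}{4}+1)}\right)$, and then to multiply those identities by $\la(b_+-b_-)$.

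Next I would record the orders of all quantities appearing on the right-hand sides. Using $\varkappa_\pm=\la\sigma_\pm$ with $\sigma_\pm>0$ fixed, \eqref{PE-Lemma5-EQ2} and the explicit formulas \eqref{ei}: the functions $e_1,e_2,e_5,e_6$ are, uniformly in $\la$, bounded linear combinations of $\cos(s\varkappa_\pm)$, so $\sup_{[0,l_1]}|e_i|\le C$; while $e_3,e_4,e_7,e_8$ are each $\la^{-1}$ times a bounded linear combination of $\sin(s\varkappa_\pm)$, whence $\sup_{[0,l_1]}|e_i|\le C\la^{-1}$ and, the point used below, their primitives vanishing at $0$ are $O(\la^{-2})$ uniformly on $[0,l_1]$, since integrating $\sin(s\varkappa_\pm)$ gains a further factor $\varkappa_\pm^{-1}=O(\la^{-1})$; here and below $C$ denotes a constant independent of $\la$ and of $n$. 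On the forcing side, \eqref{G1234} shows that $G^1,G^3$ are fixed linear combinations of the velocity slots $g^2,g^4$, while $G^2,G^4$ are fixed linear combinations of $g^1,g^3$. From $\mathcal G\to 0$ in $\mathcal H_{PE}$ (cf. \eqref{PE-EXP-EQ3}) one has $\|g^2\|_{L^2(0,l_1)}=\|g^4\|_{L^2(0,l_1)}=o(1)$, as well as $\|g^1_x\|_{L^2(0,l_1)}=\|g^3_x\|_{L^2(0,l_1)}=o(1)$ and, since $g^1,g^3\in H^1_L(0,l_1)$, also $|g^1(l_1)|,|g^3(l_1)|\le\sqrt{l_1}\,o(1)=o(1)$. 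Consequently $\|G^1\|_{L^2},\|G^3\|_{L^2}=o(1)$, while $\|G^2_x\|_{L^2},\|G^4_x\|_{L^2}=o(1)$ and $|G^2(l_1)|,|G^4(l_1)|=o(1)$.

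Then I would estimate the right-hand side of \eqref{PE-Lemma4-EQ1} term by term, recalling from \eqref{PE-Lemma4-U0Ul1} (i.e. from \eqref{PE-Lemma2-EQ2}) that $v(l_1)=o(\la^{-(\frac{\ell}{4}+1)})$ and $v_x(l_1),p_x(l_1)=o(\la^{-\frac{\ell}{4}})$: the $e_2$-term is $O(1)\,o(\la^{-(\frac{\ell}{4}+1)})=o(\la^{-(\frac{\ell}{4}+1)})$, the $e_3$- and $e_4$-boundary terms are $O(\la^{-1})\,o(\la^{-\frac{\ell}{4}})=o(\la^{-(\frac{\ell}{4}+1)})$, and the $\la^{-\ell}$-integral is bounded by $\la^{-\ell}\bigl(\|e_3\|_\infty\|G^1\|_{L^1}+\|e_4\|_\infty\|G^3\|_{L^1}\bigr)=o(\la^{-\ell-1})$. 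The hard part will be the remaining $\la^{1-\ell}$-integral: a crude bound only gives $o(\la^{-\ell})$, which after the concluding multiplication by $\la$ is insufficient when $\ell$ is small, in particular for the exponential case $\ell=0$. I plan to handle it by an integration by parts, legitimate because $G^2,G^4\in H^1_L(0,l_1)$: writing $E_3,E_4$ for the primitives of $e_3,e_4$ vanishing at $0$ (so $\|E_3\|_\infty,\|E_4\|_\infty=O(\la^{-2})$),
\[
\Bigl|\la^{1-\ell}\!\int_0^{l_1}\!\bigl(e_3G^2+e_4G^4\bigr)\,ds\Bigr|
\le\la^{1-\ell}\Bigl(\bigl|E_3(l_1)G^2(l_1)+E_4(l_1)G^4(l_1)\bigr|+\!\int_0^{l_1}\!\bigl(|E_3|\,|G^2_x|+|E_4|\,|G^4_x|\bigr)\,ds\Bigr)\le C\,\la^{1-\ell}\la^{-2}\,o(1)=o(\la^{-\ell-1}).
\]
Summing and using $\frac{\ell}{4}+1\le\ell+1$ shows that the right-hand side of \eqref{PE-Lemma4-EQ1} is $o(\la^{-(\frac{\ell}{4}+1)})$; the identical computation applied to \eqref{PE-Lemma4-EQ2} (with $e_6,e_7,e_8$ replacing $e_2,e_3,e_4$, of the same orders) gives the same bound. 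Multiplying \eqref{PE-Lemma4-EQ1} and \eqref{PE-Lemma4-EQ2} by $\la(b_+-b_-)$ and inserting the expressions for $e_1(l_1)$ and $e_5(l_1)$ above then yields \eqref{PE-Lemma5-EQ1}. Apart from this integration-by-parts step, whose success rests on the primitives of $e_3,e_4,e_7,e_8$ carrying the extra factor $\la^{-1}$ (which itself uses that $\varkappa_+/\varkappa_-=\sigma_+/\sigma_-$ is a fixed bounded constant), everything else is routine bookkeeping.
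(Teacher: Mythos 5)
Your proposal is correct and follows essentially the same route as the paper: you bound the boundary terms and the $\la^{-\ell}$-integrals directly from \eqref{ei}, \eqref{PE-Lemma5-EQ2} and \eqref{G1234}, and you treat the $\la^{1-\ell}$-integrals by integrating by parts so that the antiderivatives of $e_3,e_4,e_7,e_8$ contribute the extra $(\varkappa_+\varkappa_-)^{-1}=O(\la^{-2})$, exactly as in the paper's proof of Lemma \ref{PE-Lemma5}. The only cosmetic difference is your choice of primitives vanishing at $0$ versus the paper's explicit antiderivatives (where the boundary term at $0$ drops because $G^2(0)=G^4(0)=0$), which changes nothing in the estimates.
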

\begin{proof}
First, the following estimates are immediate by \eqref{ei}, \eqref{PE-Lemma5-EQ2}, \eqref{G1234}, and $\|\mathcal{G}\|_{\mathcal{H}_{PE}}=o(1)$,
\begin{equation}\label{PE-Lemma5-EQ3}
\begin{array}{l}
\abs{e_2(l_1) o(\la^{-(\frac{\ell}{4}+1)})}=o(\la^{-(\frac{\ell}{4}+1)}),\quad \abs{e_3(l_1)o(\la^{-\frac{\ell}{4}})}=o(\la^{-(\frac{\ell}{4}+1)}),\quad \abs{e_4(l_1)o(\la^{-\frac{\ell}{4}})}=o(\la^{-(\frac{\ell}{4}+1)}),\\
\abs{e_6(l_1)o(\la^{-(\frac{\ell}{4}+1)})}=o(\la^{-(\frac{\ell}{4}+1)}),\quad \abs{e_7(l_1)o(\la^{-\frac{\ell}{4}})}=o(\la^{-(\frac{\ell}{4}+1)}),\quad \abs{e_8(l_1)o(\la^{-\frac{\ell}{4}})}=o(\la^{-(\frac{\ell}{4}+1)}),\\
\displaystyle
\left|\la^{-\ell}\int_0^{l_1}\left(e_3(s)G^1(s)+e_4(s)G^3(s)\right)ds\right|=o(\la^{-(1+\ell)}),\\
\displaystyle
\left|\la^{-\ell}\int_0^{l_1}\left(e_7(s)G^1(s)+e_8(s)G^3(s)\right)ds\right|=o(\la^{-(1+\ell)}).
\end{array}
\end{equation}
Now, integrate by parts to obtain
\begin{equation*}
\begin{array}{l}
\displaystyle
i\la^{1-\ell} \int_0^{l_1}e_3(s)G^2(s)ds=\frac{i\la^{1-\ell} G^2(l_1)}{\varkappa_-\varkappa_+(b_+-b_-)}\left[b_+\frac{\varkappa_+}{\varkappa_-}\cos(l_1\varkappa_-)-b_-\frac{\varkappa_-}{\varkappa_+}\cos(l_1\varkappa_+)\right]\\[0.1in]
\displaystyle
-\frac{i\la^{1-\ell}}{\varkappa_-\varkappa_+(b_+-b_-)}\int_0^{l_1}\left[b_+\frac{\varkappa_+}{\varkappa_-}\cos(s\varkappa_-)-b_-\frac{\varkappa_-}{\varkappa_+}\cos(s\varkappa_+)\right]G^2_s(s)ds,\\[0.1in]
\displaystyle
i\la^{1-\ell} \int_0^{l_1}e_4(s)G^4(s)ds= \frac{i\la^{1-\ell} G^4(l_1)}{\varkappa_-\varkappa_+(b_+-b_-)}\left[\frac{\varkappa_-}{\varkappa_+}\cos(l_1 \varkappa_+)-\frac{\varkappa_+}{\varkappa_-}\cos(l_1 \varkappa_-)\right]\\[0.1in]
\displaystyle
-\frac{i\la^{1-\ell} }{\varkappa_-\varkappa_+(b_+-b_-)}\int_0^{l_1}\left[\frac{\varkappa_-}{\varkappa_+}\cos(l_1 \varkappa_+)-\frac{\varkappa_+}{\varkappa_-}\cos(l_1 \varkappa_-)\right]G^4_s(s)ds, 	
\end{array}
\end{equation*}
\begin{equation*}
\begin{array}{l}
\displaystyle
i\la^{1-\ell} \int_0^{l_1}e_7(s)G^2(s)ds=\frac{i\la^{1-\ell} b_+b_-G^2(l_1)}{\varkappa_-\varkappa_+(b_+-b_-)}\left[\frac{\varkappa_+}{\varkappa_-}\cos(l_1\varkappa_-)-\frac{\varkappa_-}{\varkappa_+}\cos(l_1\varkappa_+)\right]\\[0.1in]
\displaystyle
-\frac{i\la^{1-\ell} b_+b_-}{\varkappa_-\varkappa_+(b_+-b_-)}\int_0^{l_1}\left[\frac{\varkappa_+}{\varkappa_-}\cos(s\varkappa_-)-\frac{\varkappa_-}{\varkappa_+}\cos(s\varkappa_+)\right]G^2_s(s)ds,\\[0.1in]
\displaystyle
i\la^{1-\ell} \int_0^{l_1}e_8(s)G^4(s)ds=\frac{i\la^{1-\ell} G^4(l_1)}{\varkappa_-\varkappa_+(b_+-b_-)}\left[b_+\frac{\varkappa_-}{\varkappa_+}\cos(l_1\varkappa_+)-b_-\frac{\varkappa_+}{\varkappa_-}\cos(l_1\varkappa_-)\right]\\[0.1in]
\displaystyle
-\frac{i\la^{1-\ell} }{\varkappa_-\varkappa_+(b_+-b_-)}\int_0^{l_1}\left[b_+\frac{\varkappa_-}{\varkappa_+}\cos(s\varkappa_+)-b_-\frac{\varkappa_+}{\varkappa_-}\cos(s\varkappa_-)\right]G^4_s(s)ds.
\end{array}
\end{equation*}
Next, by \eqref{PE-Lemma5-EQ3} and  $G^2,G^4\in H_L^1(0,l_1)\subset C([0,l_1])$,
\begin{equation}\label{PE-Lemma5-EQ4}
\begin{array}{l}
\displaystyle
\left|i\la^{1-\ell} \int_0^{l_1}e_3(s)G^2(s)ds\right|\leq \left|\frac{i\la^{1-\ell} G^2(l_1)}{\varkappa_-\varkappa_+(b_+-b_-)}\left[b_+\frac{\varkappa_+}{\varkappa_-}\cos(l_1\varkappa_-)-b_-\frac{\varkappa_-}{\varkappa_+}\cos(l_1\varkappa_+)\right]\right|\\[0.1in]
\displaystyle
+\left|\frac{i\la^{1-\ell}}{\varkappa_-\varkappa_+(b_+-b_-)}\int_0^{l_1}\left[b_+\frac{\varkappa_+}{\varkappa_-}\cos(s\varkappa_-)-b_-\frac{\varkappa_-}{\varkappa_+}\cos(s\varkappa_+)\right]G^2_s(s)ds\right|\leq o(\la^{-(1+\ell)}),\\[0.1in]
\displaystyle
\left|i\la^{1-\ell} \int_0^{l_1}e_4(s)G^4(s)ds\right|\leq \left| \frac{i\la^{1-\ell} G^4(l_1)}{\varkappa_-\varkappa_+(b_+-b_-)}\left[\frac{\varkappa_-}{\varkappa_+}\cos(l_1 \varkappa_+)-\frac{\varkappa_+}{\varkappa_-}\cos(l_1 \varkappa_-)\right]\right|\\[0.1in]
\displaystyle
+\left|\frac{i\la^{1-\ell} }{\varkappa_-\varkappa_+(b_+-b_-)}\int_0^{l_1}\left[\frac{\varkappa_-}{\varkappa_+}\cos(l_1 \varkappa_+)-\frac{\varkappa_+}{\varkappa_-}\cos(l_1 \varkappa_-)\right]G^4_s(s)ds\right|\leq o(\la^{-(1+\ell)}), 	
\end{array}
\end{equation}
\begin{equation}\label{PE-Lemma5-EQ5}
\begin{array}{l}
\displaystyle
\left|i\la^{1-\ell} \int_0^{l_1}e_7(s)G^2(s)ds\right|\leq \left|\frac{i\la^{1-\ell} b_+b_-G^2(l_1)}{\varkappa_-\varkappa_+(b_+-b_-)}\left[\frac{\varkappa_+}{\varkappa_-}\cos(l_1\varkappa_-)-\frac{\varkappa_-}{\varkappa_+}\cos(l_1\varkappa_+)\right]\right|\\[0.1in]
\displaystyle
+\left|\frac{i\la^{1-\ell} b_+b_-}{\varkappa_-\varkappa_+(b_+-b_-)}\int_0^{l_1}\left[\frac{\varkappa_+}{\varkappa_-}\cos(s\varkappa_-)-\frac{\varkappa_-}{\varkappa_+}\cos(s\varkappa_+)\right]G^2_s(s)ds\right|\leq o(\la^{-(1+\ell)}),\\[0.1in]
\displaystyle
\left|i\la^{1-\ell} \int_0^{l_1}e_8(s)G^4(s)ds\right|\leq \left|\frac{i\la^{1-\ell} G^4(l_1)}{\varkappa_-\varkappa_+(b_+-b_-)}\left[b_+\frac{\varkappa_-}{\varkappa_+}\cos(l_1\varkappa_+)-b_-\frac{\varkappa_+}{\varkappa_-}\cos(l_1\varkappa_-)\right]\right|\\[0.1in]
\displaystyle
+\left|\frac{i\la^{1-\ell} }{\varkappa_-\varkappa_+(b_+-b_-)}\int_0^{l_1}\left[b_+\frac{\varkappa_-}{\varkappa_+}\cos(s\varkappa_+)-b_-\frac{\varkappa_+}{\varkappa_-}\cos(s\varkappa_-)\right]G^4_s(s)ds\right|\leq o(\la^{-(1+\ell)}).
\end{array}
\end{equation}
Finally, \eqref{PE-Lemma5-EQ1} follows from \eqref{PE-Lemma5-EQ2} and substituting  \eqref{PE-Lemma5-EQ3}, \eqref{PE-Lemma5-EQ4} and \eqref{PE-Lemma5-EQ5} into \eqref{PE-Lemma4-EQ1} and \eqref{PE-Lemma4-EQ2}.
 \end{proof}

\begin{lemma}
Assume \ma{\eqref{LD-E} }. Let $\ell=0$ (if $\ma{\rm{\mathbf{(H_{Exp})}}}$ holds) or $\ell=4\varpi\left(\frac{\sigma_+}{\sigma_-}\right)-4$ (if $\ma{\rm{\mathbf{(H_{Pol})}}}$ holds).  Then,  the solution $(v,z,p,q,y,y^1)$ of the system \eqref{PE-EXP1}-\eqref{PE-EXP2} satisfies the following estimate
\begin{equation}\label{PE-Lemma6-EQ1}
\abs{\la p(l_1)}=o(1). 	
\end{equation}
\end{lemma}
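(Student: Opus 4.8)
The plan is to feed the two asymptotic identities of Lemma \ref{PE-Lemma5} into a short linear‑algebra step and then run a Diophantine contradiction argument powered by $\mathbf{(H_{Exp})}$ or $\mathbf{(H_{Pol})}$. First I would observe that \eqref{PE-Lemma5-EQ1} says exactly that the pair $\bigl(\cos(l_1\varkappa_+)\,\la p(l_1),\ \cos(l_1\varkappa_-)\,\la p(l_1)\bigr)$ solves a linear system with coefficient matrix $\left(\begin{smallmatrix}1&-1\\ b_+&-b_-\end{smallmatrix}\right)$, whose determinant $b_+-b_-$ is nonzero by \eqref{PE-Lemma5-EQ2}. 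Inverting it yields at once
$$\cos(l_1\varkappa_+)\,\la p(l_1)=o(\la^{-\frac{\ell}{4}})\qquad\text{and}\qquad \cos(l_1\varkappa_-)\,\la p(l_1)=o(\la^{-\frac{\ell}{4}}).$$
Since $\ell\ge 0$ in both regimes (either $\ell=0$, or $\ell=4\varpi\bigl(\tfrac{\sigma_+}{\sigma_-}\bigr)-4\ge 4$ because $\varpi\ge 2$), it then suffices to prevent $\cos(l_1\varkappa_+)$ and $\cos(l_1\varkappa_-)$ from being \emph{simultaneously} small along any subsequence on which $|\la p(l_1)|$ stays bounded away from zero.

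To this end I would argue by contradiction: assume that, along a subsequence still indexed by $n$, one has $|\la^n p^n(l_1)|\ge\delta>0$; then both $\cos(l_1\varkappa_+^n)\to 0$ and $\cos(l_1\varkappa_-^n)\to 0$. For each such $n$ I choose positive integers $n_+,n_-$ minimizing $|\epsilon_\pm|$, where $\epsilon_\pm:=l_1\varkappa_\pm-\tfrac{\pi}{2}(2n_\pm+1)$; then $|\epsilon_\pm|\le\tfrac{\pi}{2}\,|\cos(l_1\varkappa_\pm)|=o(\la^{-\frac{\ell}{4}})$, while $l_1\varkappa_\pm=l_1\sigma_\pm\la$ forces $2n_\pm+1=\tfrac{2l_1\sigma_\pm}{\pi}\la+o(1)$, so $2n_\pm+1$ has exact order $\la$ and $\tfrac{2n_++1}{2n_-+1}\to\tfrac{\sigma_+}{\sigma_-}$. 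Substituting $l_1\varkappa_\pm=\tfrac{\pi}{2}(2n_\pm+1)+\epsilon_\pm$ into
$$\frac{\sigma_+}{\sigma_-}-\frac{2n_++1}{2n_-+1}=\frac{l_1\varkappa_+(2n_-+1)-l_1\varkappa_-(2n_++1)}{l_1\varkappa_-(2n_-+1)}=\frac{\epsilon_+(2n_-+1)-\epsilon_-(2n_++1)}{l_1\varkappa_-(2n_-+1)}$$
and using these size estimates gives $\bigl|\tfrac{\sigma_+}{\sigma_-}-\tfrac{2n_++1}{2n_-+1}\bigr|=o(\la^{-1-\frac{\ell}{4}})$.

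The contradiction then comes from the arithmetic hypothesis. Under $\mathbf{(H_{Exp})}$, where $\ell=0$ and $\tfrac{\sigma_+}{\sigma_-}=\tfrac{\xi_+}{\xi_-}$ with $\gcd(\xi_+,\xi_-)=1$ and $\xi_+,\xi_-$ of opposite parity, the integer $\xi_+(2n_-+1)-\xi_-(2n_++1)$ is odd, hence $\ge 1$ in absolute value, so $\bigl|\tfrac{\sigma_+}{\sigma_-}-\tfrac{2n_++1}{2n_-+1}\bigr|\ge\tfrac{1}{\xi_-(2n_-+1)}\ge\tfrac{c}{\la}$, contradicting the $o(\la^{-1})$ just obtained. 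Under $\mathbf{(H_{Pol})}$, where $\ell=4\varpi\bigl(\tfrac{\sigma_+}{\sigma_-}\bigr)-4$, one has $1+\tfrac{\ell}{4}=\varpi\bigl(\tfrac{\sigma_+}{\sigma_-}\bigr)$; applying $\mathbf{(H_{Pol})}$ to the sequence $\Lambda=(2n_++1,2n_-+1)_n$ (admissible since its terms are of comparable size, their ratio tending to $\tfrac{\sigma_+}{\sigma_-}\in(0,\infty)$) gives $\bigl|\tfrac{\sigma_+}{\sigma_-}-\tfrac{2n_++1}{2n_-+1}\bigr|>\tfrac{c}{(2n_-+1)^{\varpi}}\ge\tfrac{c'}{\la^{\varpi}}$ for $n$ large, again contradicting $o(\la^{-\varpi})$. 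Hence no such subsequence exists, which is precisely \eqref{PE-Lemma6-EQ1}.

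I expect the delicate point to be this Diophantine step: turning ``$\cos(l_1\varkappa_\pm)$ is small'' into a genuine rational approximation of $\tfrac{\sigma_+}{\sigma_-}$ by odd‑over‑odd fractions, together with the exponent bookkeeping that makes the error term $o(\la^{-1-\ell/4})$ beat \emph{exactly} the lower bound $c\,\la^{-\varpi}$ (resp. $c\,\la^{-1}$) furnished by $\mathbf{(H_{Pol})}$ (resp. $\mathbf{(H_{Exp})}$); verifying that $(2n_++1,2n_-+1)_n$ is an admissible sequence in $\mathbf{(H_{Pol})}$ is a minor side point to check carefully.
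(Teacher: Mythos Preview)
Your proposal is correct and follows essentially the same route as the paper: invert the $2\times 2$ system from Lemma~\ref{PE-Lemma5} using $b_+\ne b_-$, assume by contradiction that $|\la p(l_1)|$ stays bounded away from zero along a subsequence to force both cosines to be $o(\la^{-\ell/4})$, extract the odd-over-odd rational approximants $\tfrac{2n_++1}{2n_-+1}$ of $\tfrac{\sigma_+}{\sigma_-}$, and close with the parity argument under $\mathbf{(H_{Exp})}$ or the irrationality-measure lower bound under $\mathbf{(H_{Pol})}$. The paper organizes the final step as a bound on $(2n_++1)-(2n_-+1)\tfrac{\sigma_+}{\sigma_-}=o(\la^{-\ell/4})$ rather than on the quotient difference directly, but after dividing by $2n_-+1\sim\la$ this is exactly your $o(\la^{-1-\ell/4})$, so the two presentations are equivalent.
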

\begin{proof}
Since $\ell \geq 0$,  it is easy to see that $\abs{\la p(l_1)}=O(1)$ from \eqref{PE-Lemma3-EQ4}. Now assume that \eqref{PE-Lemma6-EQ1} does not hold. Then, there exists  a positive constant $cst$
and a subsequence   such that $\ma{\abs{\la p(l_1)}\geq cst}$.  By \eqref{PE-Lemma5-EQ1},
\begin{equation*}\label{PE-Lemma6-EQ7}
\cos(l_1\varkappa_+)-\cos(l_1\varkappa_-)=o(\la^{-\frac{\ell}{4}})\quad \text{and}\quad b_+\cos(l_1\varkappa_+)-b_-\cos(l_1\varkappa_-)=o(\la^{-\frac{\ell}{4}}),
\end{equation*}
from which, the following is obtained
$$
\mathcal{M}(b_+,b_-)\begin{pmatrix}
\cos(l_1\varkappa_+)\\ 	\cos(l_1\varkappa_-)
\end{pmatrix}=\begin{pmatrix}
o(\la^{-\frac{\ell}{4}})\\ o(\la^{-\frac{\ell}{4}})	
\end{pmatrix},\quad \text{and}\,\, \mathcal{M}(b_+,b_-)=\begin{pmatrix}
1&-1\\ b_+&-b_-	
\end{pmatrix}.
$$
From \eqref{PE-Lemma5-EQ2}, $\det(\mathcal{M}(b_+,b_-))=b_+-b_-\neq 0$, and thus,
$$
\cos(l_1\varkappa_+)=o(\la^{-\frac{\ell}{4}})\quad \text{and}\quad \cos(l_1\varkappa_-)=o(\la^{-\frac{\ell}{4}}).
$$
This together with \eqref{kappa+-b+-} imply that there exists $n^1_+,n^1_-\in \mathbb{Z}$ such that
$$
\la =\frac{(2n^1_++1)\pi}{2\sigma_+l_1}+o(\la^{-\frac{\ell}{4}})\quad\text{and}\quad \la =\frac{(2n^1_-+1)\pi}{2\sigma_-l_1}+o(\la^{-\frac{\ell}{4}}).
$$
Since $\la $ is large enough, i.e., $\la \sim \frac{(2n^1_++1)\pi}{2\sigma_+l_1}\sim \frac{(2n^1_-+1)\pi}{2\sigma_-l_1}$,
$$
\frac{(2n^1_++1)\pi}{2\sigma_+l_1}+o(\la^{-\frac{\ell}{4}})=\frac{(2n^1_-+1)\pi}{2\sigma_-l_1}+o(\la^{-\frac{\ell}{4}}).
$$
and therefore,
\begin{equation}\label{PE-Lemma6-EQ9}
(2n^1_++1)-(2n^1_-+1)\frac{\sigma_+}{\sigma_-}=o(\la^{-\frac{\ell}{4}}).	
\end{equation}
$\bullet$ Assume that $\rm{\mathbf{(H_{Exp})}}$ holds and take $\ell=0$. Then, by  \eqref{PE-Lemma6-EQ9}
$$
\frac{(2n_++1)\xi_--(2n_-+1)\xi_+}{\xi_-}=o(1).
$$
It is known that $\abs{(2n^1_++1)\xi_--(2n^1_-+1)\xi_+}\geq 1$ since $(2n^1_-+1)\xi_+$ is an even number and $(2n^1_++1)\xi_-$ is an odd number,   or $(2n^1_-+1)\xi_+$ is an odd number and $(2n^1_++1)\xi_-$ is an even number. However, this contradicts with $\frac{1}{\xi_-}\leq o(1)$. Consequently,  \eqref{PE-Lemma6-EQ1} is obtained as \ma{$\rm{\mathbf{(H_{Exp})}}$} holds.\\
$\bullet$ Assume $\rm{\mathbf{(H_{Pol})}}$ and choose $\ell=4\varpi\left(\frac{\sigma_+}{\sigma_-}\right)-4$. Since $\la \sim \frac{(2n^1_++1)\pi}{2\sigma_+l_1}\sim \frac{(2n^1_-+1)\pi}{2\sigma_-l_1}$ and by \eqref{PE-Lemma6-EQ9},
$$
\frac{\sigma_+}{\sigma_-}-\frac{2n^1_++1}{2n^1_-+1}=o(\la^{-\varpi\left(\frac{\sigma_+}{\sigma_-}\right)}).
$$
However, the contradiction $c\left(\frac{\sigma_+}{\sigma_-},\Lambda\right)\leq o(1)$ is immediate by $\rm{\mathbf{(H_{Pol})}}$ with the sequence
$\Lambda=((2n_+^1+1,2n_-^1+1))_{n\in \mathbb{N}}$.  Consequently, \eqref{PE-Lemma6-EQ1} is obtained as  $\rm{\mathbf{(H_{Pol})}}$ holds.

\end{proof}
$\newline$
\\
\textbf{Proof of Theorem \ref{PE-EXP}.} Assume $\rm{\mathbf{(H_{Exp})}}$. Then, \eqref{PE-Lemma2-EQ1}, \eqref{PE-Lemma3-EQ1} and \eqref{PE-Lemma6-EQ1} results in $\|U\|_{\mathcal{H}_{PE}}=o(1)$, which contradicts  by \eqref{PE-EXP-EQ2}. Consequently, the  condition \eqref{N2-PE} holds true. \\
\textbf{Proof of Theorem \ref{Pol-PE}}. Assume $\rm{\mathbf{(H_{Pol})}}$ and take $\ell=4\varpi(\frac{\sigma_+}{\sigma_-})-4$. Then, \eqref{PE-Lemma2-EQ1}, \eqref{PE-Lemma3-EQ1} and \eqref{PE-Lemma6-EQ1}  result in$\|U\|_{\mathcal{H}_{PE}}=o(1)$, which contradicts  by \eqref{PE-EXP-EQ2}. Consequently, the condition \eqref{N2-PE} holds true.

{\color{black}
\begin{rem} \label{rem-electro2}
Following Remark \ref{rem-electro}, it is also important to note that electrostatic/quasi-static approaches in the modeling of piezoelectric beams exclude dynamic electromagnetic effects, expressed as $\mu\equiv 0$ in \eqref{PE}. Our analysis in this section reveals that under the assumption \eqref{LD-E}, an exponential stability result can also be obtained with solely local damping. The proof  is left to the reader. 
\end{rem}
}

\section{Illustration of the hypothesis $\rm{\mathbf{(H_{Pol})}}$} \label{sillustration}

In this section,   some examples are provided for the   hypothesis $\rm{\mathbf{(H_{Pol})}}$ to hold true. For this purpose, we start with the notion of badly approximable real numbers.

\begin{definition} \cite[Definition 1.3]{Bugeaud} \label{BAN}
A real number $\xi$  is  badly approximable if there is a positive constant $c(\xi)$  such that for every rational number $\frac{p}{q}\ne \xi,$
\begin{equation}\label{BAN1}
\left|\xi -\frac{p}{q}\right|> \frac{c(\xi)}{q^2}. 	
\end{equation}
\end{definition}
\noindent It is well known (see \cite[Theorem 1.1 and Corollary 1.2]{Bugeaud}) that rational and irrational quadratic  numbers are badly approximable. However the set $\mathcal{B}$ of   badly approximable numbers is larger since $\xi\in \mathcal{B}$ if and only if the sequence$ \{x_n\}_{n\in \mathbb{N}}$ is bounded,
denoting $[x_0,x_1,\cdots,x_n,\cdots]$ its expansion as a continued fraction, see  \cite[Theorem 1.9]{Bugeaud}. Note also that the Lebesgue measure of $\mathcal{B}$ is equal to zero.

Now, by Definition \ref{BAN}, it safe to deduce that if $\frac{\sigma_+}{\sigma_-}$
is a badly approximable irrational number,  $\rm{\mathbf{(H_{Pol})}}$ holds with
$\varpi(\frac{\sigma_+}{\sigma_-})=2$, and consequently, Theorem \ref{Pol-PE} yields a polynomial energy decay  in $t^{-1/2}$.
\noindent The case $\frac{\sigma_+}{\sigma_-}\not\in \mathcal{B}$, though, requires the notion of the irrationality measure (sometimes called the Liouville-Roth constant or irrationality exponent). For this, recall the following result from  \cite{Bugeaud:18}.

\begin{definition} \label{irrationality measure}
Let $\xi$ be an irrational real number. Then, the real number
$\mu\geq 1$ is called to be the irrationality measure of $\xi,$ if there exists a positive constant $C(\xi, \mu, \varepsilon)$ for every positive real number $\varepsilon$ such that
 \begin{equation}\label{defBurgeaud}
\left|\xi -\frac{p}{q}\right|> \frac{C(\xi,\mu ,\varepsilon)}{q^{\mu+\varepsilon}},\qquad
\forall  p,q \in \mathbb{Z} \hbox{    with }q\geq 1.
\end{equation}
The irrationality exponent
$\mu(\xi)$ of $\xi$ is defined as the infimum of the irrationality measures of $\xi$.
\end{definition}
\noindent Notice that $\mu(\xi)$ is always  $\geq 2$, see \cite[Theorem E.2]{Bugeaud}. A direct consequence of this definition is that if $\frac{\sigma_+}{\sigma_-}\not\in \mathcal{B}$ is an
irrational real number such that its irrationality exponent
$\mu(\xi)$ is finite, then
 $\rm{\mathbf{(H_{Pol})}}$ holds with
$\varpi(\frac{\sigma_+}{\sigma_-})=\mu(\frac{\sigma_+}{\sigma_-})+\varepsilon$ for any $\varepsilon>0$. Let us then give some examples of irrational real numbers with finite irrationality exponent. First by the Roth's theorem, for every algebraic number of degree $\geq 2$, $\mu(\xi)=2$, see \cite{Bugeaud}.
However, for many irrational real numbers $\xi$, the exact value of $\mu(\xi)$ is not explicitly known but  some upper bound for their  irrationality exponent
is available, see Table \ref{table 1}. Note that  $\nu(\xi)$ is an upper bound of $\mu(\xi)$ if
$\mu(\xi)\leq \nu(\xi)$, therefore, we automatically have
 \[
\left|\xi -\frac{p}{q}\right|> \frac{C(\xi,\mu,\varepsilon)}{q^{\nu(\xi)+\varepsilon}},\qquad
\forall  p,q \in \mathbb{Z} \hbox{    with }q\geq 1,
\]
for all $\varepsilon>0$. Consequently if $\nu(\xi)$ is finite,  $\rm{\mathbf{(H_{Pol})}}$ holds with
$\varpi(\frac{\sigma_+}{\sigma_-})=\nu(\frac{\sigma_+}{\sigma_-})+\varepsilon$, for any $\varepsilon>0$.

\noindent Now, in order to give other illustrations stated in the literature, an equivalent formulation of
the irrationality measure, frequently used as the definition of the irrationality measure, may be needed, e.g. see \cite{Irrationality,ZeilbergerZudilin:2020}.

\begin{lemma}\label{dmb}
Fix  an irrational real number $\xi$ and a real number
$\mu\geq 1$. Then, the following are equivalent
\\
(1) For every positive real number  $\varepsilon$, there exists a positive constant $C(\xi,\mu ,\varepsilon)$ such that
\eqref{defBurgeaud} holds.
\\
(2) For every positive real number  $\varepsilon$,  there exists a positive integer $N(\xi,\mu ,\varepsilon)$ such that
\begin{equation}\label{defstandard}
\left|\xi -\frac{p}{q}\right|> \frac{1}{q^{\mu+\varepsilon}},\qquad
\forall  p,q \in \mathbb{Z} \hbox{    with }q\geq N(\xi,\mu ,\varepsilon).
\end{equation}
\end{lemma}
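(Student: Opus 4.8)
The statement is a standard equivalence between the two most common formulations of the irrationality measure: one with an $\varepsilon$-dependent constant $C(\xi,\mu,\varepsilon)$ in front of $q^{-(\mu+\varepsilon)}$, and one with no constant but a threshold $N(\xi,\mu,\varepsilon)$ on the denominator $q$. The plan is to prove both implications by absorbing the constant into the exponent at the cost of shrinking $\varepsilon$ slightly, which is possible precisely because $q^{\varepsilon/2}\to\infty$.

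\emph{Proof of $(1)\Rightarrow(2)$.} Fix $\varepsilon>0$ and apply hypothesis $(1)$ with $\varepsilon/2$ in place of $\varepsilon$: there is a constant $c:=C(\xi,\mu,\varepsilon/2)>0$ with $\left|\xi-\tfrac{p}{q}\right|>c\,q^{-(\mu+\varepsilon/2)}$ for all integers $p,q$ with $q\ge 1$. Choose a positive integer $N$ so large that $q^{\varepsilon/2}>1/c$ whenever $q\ge N$; this is possible since $\varepsilon>0$. Then for $q\ge N$ one has $c\,q^{-(\mu+\varepsilon/2)}=c\,q^{\varepsilon/2}\cdot q^{-(\mu+\varepsilon)}>q^{-(\mu+\varepsilon)}$, and combining with the previous inequality gives $\left|\xi-\tfrac{p}{q}\right|>q^{-(\mu+\varepsilon)}$ for all $p\in\mathbb{Z}$ and all $q\ge N$. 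Thus $(2)$ holds with $N(\xi,\mu,\varepsilon)=N$.

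\emph{Proof of $(2)\Rightarrow(1)$.} Fix $\varepsilon>0$ and apply hypothesis $(2)$ with $\varepsilon/2$: there is an integer $N:=N(\xi,\mu,\varepsilon/2)\ge 1$ with $\left|\xi-\tfrac{p}{q}\right|>q^{-(\mu+\varepsilon/2)}$ for all $p\in\mathbb{Z}$, $q\ge N$. For such $q$ we have $q^{-(\mu+\varepsilon/2)}=q^{\varepsilon/2}q^{-(\mu+\varepsilon)}\ge q^{-(\mu+\varepsilon)}$, so the bound of $(1)$ holds with constant $1$ for all denominators $q\ge N$. It remains to handle the finitely many integers $q\in\{1,\dots,N-1\}$. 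For each such $q$, since $\xi$ is irrational the quantity $\delta_q:=\min_{p\in\mathbb{Z}}\left|\xi-\tfrac{p}{q}\right|$ is strictly positive (the minimum is attained, as $|\xi-p/q|\to\infty$ as $|p|\to\infty$). Set $c_0:=\min_{1\le q\le N-1}\delta_q\,q^{\mu+\varepsilon}>0$; then $\left|\xi-\tfrac{p}{q}\right|\ge\delta_q\ge c_0\,q^{-(\mu+\varepsilon)}$ for all $1\le q\le N-1$ and all $p\in\mathbb{Z}$. Taking $C(\xi,\mu,\varepsilon):=\min\{1,c_0\}>0$ yields $(1)$.

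There is no real obstacle here; the only point requiring a word of care is the treatment of the small denominators $q<N$ in $(2)\Rightarrow(1)$, where one uses irrationality of $\xi$ to guarantee that each $\delta_q$ is positive, and then takes a minimum over a finite set to produce a single positive constant. Everything else is the elementary observation that multiplying $q^{-(\mu+\varepsilon)}$ by $q^{\varepsilon/2}$ produces a factor that is $\ge 1$ (resp. $>1/c$) once $q$ is large, which is what lets one trade the constant against a harmless increase in the exponent.
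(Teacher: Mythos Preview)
Your proof is correct. The implication $(1)\Rightarrow(2)$ is handled exactly as in the paper: apply $(1)$ with $\varepsilon/2$ and choose $N$ large enough that $C(\xi,\mu,\varepsilon/2)\,q^{\varepsilon/2}\ge 1$.

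For $(2)\Rightarrow(1)$ you take a different, more constructive route. The paper argues by contradiction: if $(1)$ fails for some $\varepsilon$, one extracts a sequence $p_n/q_n$ with $|\xi-p_n/q_n|\le (n\,q_n^{\mu+\varepsilon})^{-1}$, observes that $p_n/q_n\to\xi$ forces $q_n\to\infty$, and then invokes $(2)$ for large $n$ to reach a contradiction. You instead apply $(2)$ directly (the detour through $\varepsilon/2$ is harmless but unnecessary here), obtain the bound with constant $1$ for all $q\ge N$, and then treat the finitely many remaining denominators $1\le q<N$ by using irrationality of $\xi$ to ensure each $\delta_q=\min_{p}|\xi-p/q|>0$ and taking a finite minimum. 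Your argument has the advantage of producing an explicit constant $C(\xi,\mu,\varepsilon)$, while the paper's contradiction argument is a line shorter but non-constructive. Both are standard and equally valid.
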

\begin{proof}
 ``$(2) \Rightarrow (1)$:''  Fix $\varepsilon>0$ and suppose that (1) does not hold. Then, for all   $n\in \mathbb{N}^*$, there exists $p_n,q_n \in \mathbb{Z}$ with $q_n\geq 1$
such that
\[
 \left|\xi -\frac{p_n}{q_n}\right|\leq  \frac{1}{n q_n^{\mu+\varepsilon}},\qquad
 \forall n\in \mathbb{N}^*.
\]
This trivially implies that
\[
 \left|\xi -\frac{p_n}{q_n}\right|\leq  \frac{1}{n},\qquad
 \forall n\in \mathbb{N}^*,
\]
and consequently, $\frac{p_n}{q_n}$ converges to $\xi$ as $n$ goes to infinity. Therefore,
$q_n$ (as well as $p_n$) approaches infinity as $n$ goes  to infinity.
For large enough $n$ , $q_n$ will be greater than $N(\xi,\mu ,\varepsilon)$.
As (2) holds, by \eqref{defstandard}, the following is deduced
\[
 \frac{1}{q_n^{\mu+\varepsilon}}< \left|\xi -\frac{p}{q}\right| \leq  \frac{1}{n q_n^{\mu+\varepsilon}},\qquad
 \forall n\in \mathbb{N}^*: q_n\geq N(\xi, \mu, \varepsilon),
 \]
which leads to a contradiction by letting $n$ go to infinity.
\\ ``$(1) \Rightarrow (2)$:''
For a fixed $\varepsilon>0$,
assume (1) with $\frac{\varepsilon}{2}$. Then, there exists
 a positive constant $C(\xi,\mu, \frac{\varepsilon}{2})$ such that
 \[
\left|\xi -\frac{p}{q}\right|> \frac{C(\xi,\mu,\frac{\varepsilon}{2})}{q^{\mu+\frac{\varepsilon}{2}}},\qquad
\forall  p,q \in \mathbb{Z} \hbox{    with }q\geq 1,
\]
which is equivalent to
 \begin{equation}\label{defBurgeaud2}
\left|\xi -\frac{p}{q}\right|> \frac{C(\xi, \mu, \frac{\varepsilon}{2})q^{\frac{\varepsilon}{2}}}{q^{\mu+\varepsilon}},\qquad
\forall  p,q \in \mathbb{Z} \hbox{    with }q\geq 1.
\end{equation}
Since $C(\xi,{\color{red}\mu?},\frac{\varepsilon}{2})q^{\frac{\varepsilon}{2}}$ approaches infinity as $q$ goes to infinity, restrict ourselves to $q$ such that
\[
C(\xi,\mu, \frac{\varepsilon}{2})q^{\frac{\varepsilon}{2}}\geq 1,
\]
or equivalently
\[
q\geq C(\xi,\mu, \frac{\varepsilon}{2})^{-\frac{2}{\varepsilon}}.
\]
By choosing $N(\xi, \varepsilon)\geq C(\xi,\mu, \frac{\varepsilon}{2})^{-\frac{2}{\varepsilon}},$
\eqref{defBurgeaud2}implies \eqref{defstandard} for $q\geq N(\xi, \varepsilon)$.
\end{proof}

 In relation to Remark \ref{RkSN},  the following equivalent statements can be formulated \begin{lemma}
Let us fix  $\xi$ an irrational real number and a real number
$\nu\geq 1$. Then the following results  are equivalent:
\\
(1)  There exists a positive constant $C(\xi,\nu)$ such that
\[
\left|\xi -\frac{p}{q}\right|> \frac{C(\xi,\nu)}{q^{\nu}},\qquad
\forall  p,q \in \mathbb{Z} \hbox{    with }q\geq 1.
\]
\\
 (2) For all sequences  $\Lambda=(p_{n}, q_{n})_{n\in \mathbb{N}}\in (\mathbb{N}\times \mathbb{N}^*)^{\mathbb{N}}$ with $p_{n}\sim q_{n}$ for sufficiently large $n$,
 there exist
 a positive constant  $c_1\left(\xi, \nu, \Lambda\right)$
 and a positive integer $N_1\left(\xi, \nu, \Lambda\right)$
  such that
\[
\left|\xi-\frac{p_{n}}{q_{n}}\right|> \frac{c_1\left(\xi, \nu, \Lambda\right)}{q_{n}^{\nu}},\qquad  \forall n\geq
N_1\left(\xi, \nu, \Lambda\right).\]
\\
(3) For all sequences  $\Lambda=(p_{n}, q_{n})_{n\in \mathbb{N}}\in (\mathbb{N}\times \mathbb{N}^*)^{\mathbb{N}}$ for which $\frac{p_{n}}{q_{n}}$ approaches $\xi$ as   $n$ goes to infinity,
 there exist
 a positive constant  $c_2\left(\xi, \nu, \Lambda\right)$
 and a positive integer $N_2\left(\xi, \nu, \Lambda\right)$
  such that
\[
\left|\xi-\frac{p_{n}}{q_{n}}\right|> \frac{c_2\left(\xi, \nu, \Lambda\right)}{q_{n}^{\nu}}, \qquad \forall n\geq
N_2\left(\xi, \nu, \Lambda\right).\]
\end{lemma}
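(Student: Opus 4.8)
The plan is to establish the cycle of implications $(1)\Rightarrow(2)\Rightarrow(3)\Rightarrow(1)$, in which only the last one carries real content — the first two are bookkeeping. I will use throughout that $\xi>0$; this is the situation in all our applications (there $\xi=\sigma_+/\sigma_-$), and it is genuinely needed, since for $\xi<0$ the families of sequences quantified over in $(2)$ and $(3)$ degenerate and the three properties fail to be equivalent.

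For $(1)\Rightarrow(2)$ I would take an arbitrary admissible $\Lambda=(p_n,q_n)_n$ and apply $(1)$ with $p=p_n\in\mathbb{Z}$ and $q=q_n\ge 1$, which gives $|\xi-p_n/q_n|>C(\xi,\nu)/q_n^{\nu}$ for every $n$; hence $c_1:=C(\xi,\nu)$ and $N_1:=1$ work (indeed, independently of $\Lambda$). The same argument gives $(1)\Rightarrow(3)$. For $(2)\Rightarrow(3)$ the key observation is that the sequences quantified over in $(3)$ form a subfamily of those in $(2)$: if $\Lambda\in(\mathbb{N}\times\mathbb{N}^{*})^{\mathbb{N}}$ satisfies $p_n/q_n\to\xi$ with $0<\xi<\infty$, then $\tfrac{\xi}{2}q_n\le p_n\le 2\xi q_n$ for all large $n$, so $p_n\sim q_n$ for $n$ large; moreover, $\xi$ being irrational forces $q_n\to\infty$ (along any bounded subsequence, $p_n/q_n$ would take only finitely many rational values and so could not converge to the irrational $\xi$), whence also $p_n\to\infty$. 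Applying $(2)$ to this $\Lambda$ then yields exactly the conclusion of $(3)$.

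The substantive step is $(3)\Rightarrow(1)$, which I would prove by contradiction, in the spirit of the proof of Lemma~\ref{dmb}. If $(1)$ fails, then for each $n\in\mathbb{N}^{*}$ there are $p_n\in\mathbb{Z}$ and $q_n\ge 1$ with $|\xi-p_n/q_n|\le \tfrac{1}{n\,q_n^{\nu}}\le\tfrac1n$. Thus $p_n/q_n\to\xi$, and as above $q_n\to\infty$, so $p_n=(p_n/q_n)q_n\to\xi>0$ and therefore $p_n\in\mathbb{N}$ for all large $n$; passing to a tail, $\Lambda:=(p_n,q_n)_n$ becomes an admissible sequence for $(3)$ with $p_n/q_n\to\xi$ (and the estimate $|\xi-p_n/q_n|\le 1/(n q_n^{\nu})$ survives for large $n$). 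Then $(3)$ provides a constant $c_2=c_2(\xi,\nu,\Lambda)>0$ and an integer $N_2$ with
\[
\frac{c_2}{q_n^{\nu}}<\left|\xi-\frac{p_n}{q_n}\right|\le\frac{1}{n\,q_n^{\nu}},\qquad n\ge N_2,
\]
i.e. $c_2<1/n$ for all $n\ge N_2$; letting $n\to\infty$ forces $c_2\le 0$, contradicting $c_2>0$. This closes the cycle.

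No hard analysis is involved; the only points demanding care — and the ones I would state explicitly — are: (a) the positivity of $\xi$, which is what makes the ``comparable growth'' sequences of $(2)$ genuinely contain the $\xi$-approximating sequences of $(3)$, and what guarantees that the near-optimal approximants built in $(3)\Rightarrow(1)$ have nonnegative numerators so that they form a bona fide element of $(\mathbb{N}\times\mathbb{N}^{*})^{\mathbb{N}}$; and (b) the elementary fact that $p_n/q_n\to\xi$ with $\xi$ irrational forces $q_n\to\infty$, which powers both the subfamily inclusion in $(2)\Rightarrow(3)$ and the limiting step $c_2<1/n\to 0$ in $(3)\Rightarrow(1)$. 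It is also worth recording that the constants produced in $(1)\Rightarrow(2)$ and $(1)\Rightarrow(3)$ can be chosen independent of $\Lambda$, whereas the reverse implications are exactly where the dependence on $\Lambda$ — carried by the denominators of the near-rational approximants to $\xi$ — is unavoidable; this is the conceptual justification for the ``for every $\Lambda$ there exist $c,N$ depending on $\Lambda$'' form of $\rm{\mathbf{(H_{Pol})}}$.
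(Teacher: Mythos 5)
Your proof is correct and follows essentially the same route as the paper: the implications $(1)\Rightarrow(2)\Rightarrow(3)$ are dispatched as (nearly) immediate, and the substantive step $(3)\Rightarrow(1)$ is proved by the same contradiction argument, producing approximants with $\left|\xi-\frac{p_n}{q_n}\right|\le \frac{1}{n\,q_n^{\nu}}$ and letting $n\to\infty$. The only difference is that you make explicit what the paper leaves tacit (the positivity of $\xi$, which ensures the near-optimal approximants eventually have numerators in $\mathbb{N}$ so that $\Lambda$ is admissible for $(3)$, together with $q_n\to\infty$), which is a welcome tightening of the same argument rather than a different method.
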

\begin{proof}
Obviously $(1)\Rightarrow (2) \Rightarrow (3)$. Hence it suffices to show that
$(3) \Rightarrow (1)$. We prove this by a contradiction argument. Assume that (3) holds but not (1). Then, for all   $n\in \mathbb{N}^*$, there exists $p_n,q_n \in \mathbb{Z}$ with $q_n\geq 1$
such that
\[
 \left|\xi -\frac{p_n}{q_n}\right|\leq  \frac{1}{n q_n^{\mu+\varepsilon}},\qquad
 \forall n\in \mathbb{N}^*.
\]
As in Lemma \ref{dmb}, this trivially implies that
 $\frac{p_n}{q_n}$ approaches $\xi$ as $n$ goes to infinity. Combining this with
(3) results in
\[
 \frac{c_2(\xi, \nu, \Lambda)}{q_n^{\nu}}< \left|\xi -\frac{p_n}{q_n}\right| \leq  \frac{1}{n q_n^{\nu}},\qquad
 \forall n\in \mathbb{N}^*: q_n\geq N_2(\xi, \nu, \Lambda),
 \]
 where $\Lambda=((p_n, q_n))_{n\in \mathbb{N}}$.
This leads to a contradiction by letting $n$ go to infinity.
\end{proof}

This result shows in particular that if $\rm{\mathbf{(H_{Pol})}}$ holds,
$\varpi(\frac{\sigma_+}{\sigma_-})$ is an irrationality mesure for $\frac{\sigma_+}{\sigma_-}$. Moreover, this result also shows
that the condition \eqref{defBurgeaud} in the definition of an irrationality measure
can be replaced by a condition on sequences (as in  (2) or (3) above).

Finally,  upper bounds of some   irrationality exponents are presented in Table \ref{table 1}. To keep it short, only a few of them are provided. The interested readers can refer to, e.g.\cite{Irrationality}, for various other upper bounds.

\begin{table}
\begin{center}
\begin{tabular}{|c|c|c|}
\hline
$\xi$&upper bound of $\mu\left(\xi\right)$&Reference\\
\hline
$\pi$&7.10320...& \cite{ZeilbergerZudilin:2020} \\
\hline
$\pi^2$ or $\frac{\pi^2}{6}$&5.09541...&  \cite{Zudilin:2013}\\
\hline
$\ln(2)$&3.57455391&\cite{MR2539543}\\
\hline	
$\ln(3)$&5.116201&\cite{MR3873184}\\
\hline
$\zeta(3)$ Ap\'erys constant &5.513891& \cite{MR1826005}\\
\hline
Ln$_q(2)$ q-harmonic series &2.9384& \cite{MR2196997}\\
\hline
$h_q(1)$ q-harmonic series &2.4650& \cite{MR2138454}\\
\hline
$\displaystyle{T_2(b^{-1})=\sum_{n=1}^{\infty}t_nb^{n-1}}$, $b\geq 2$ and &&\\
&4&\cite{MR2557148}\\
$t_n$ is the $n-th$ term of the True-Morse sequence&&  \\
\hline
\end{tabular}
\end{center}
 \caption{Upper bounds of some irrationality exponents \label{table 1}}
 \end{table}

\begin{rem}
As a final remark, notice that  there exist irrational numbers $\xi$ for which
$\mu(\xi)=\infty$. These numbers are called Liouville numbers.
For example,   $L = \displaystyle{\sum_{k=1}^{\infty}b^{-k!}}$ is a Liouville number.
Therefore if  $\frac{\sigma_+}{\sigma_-}$ is a  Liouville number,
the decay rate of the energy  of the system \eqref{PE} is   still an open problem. 	
\end{rem}

\section{Conclusions and open problems}

In this paper, two different transmission problems are investigated: (i) a transmission problem of an Elastic-Piezoelectric-Elastic design with only one local damping acting on the longitudinal displacement of the center line of the piezoelectric layer and  (ii) a transmission problem of a Piezoelectric-Elastic design with only one local damping acting on the elastic part.
An exponential stability result is immediate for (i). However, for (ii), the nature of the stability (polynomial or exponential) entirely  depends on the arithmetic nature of a quotient involving all the physical parameters of the system.

{\color{black} An interesting open problem, which is currently under investigation \cite{ANOS}, is the stability of the following Piezoelectric-Elastic-Piezoelectric design}
\begin{equation}\label{PEP}\tag{${\rm P/E/P}$}
\left\{\begin{array}{ll}
\rho_1 v^1_{tt}-\alpha v^1_{xx}+\gamma_1\beta_1 p^1_{xx}+b_1(x)v^1_t=0,& (x,t) \in (0,l_1)\times (0,\infty)\\
\mu_1 p^1_{tt}-\beta_1 p^1_{xx}+\gamma_1 \beta_1 v^1_{xx}=0,& (x,t) \in (0,l_1)\times (0,\infty),\\
u_{tt}-c_1u_{xx}+b_2(x)u_t=0,& (x,t) \in (l_1,l_2)\times (0,\infty),\\
\rho_2 v^2_{tt}-\tilde{\alpha} v^2_{xx}+\gamma_2\beta_2 p^2_{xx}+b_3(x)v^2_t=0,& (x,t) \in (l_2,L)\times (0,\infty),\\
\mu_2 p^2_{tt}-\beta_2 p^2_{xx}+\gamma_2 \beta_2 v^2_{xx}=0,& {\color{black} (x,t) \in (l_2,L)\times (0,\infty),} \\
v^1(0,t)=p^1(0,t)=v^2(L,t)=p^2(L,t)=0,&  \\
v^1(l_1,t)=u(l_1,t),& \\
\alpha v^1_x(l_1,t)-\gamma_1\beta_1 p^1_x(l_1,t)=c_1u_x(l_1,t),&\\
\beta_1 p^1_x(l_1,t)=\gamma_1\beta_1 v^1_x(l_1,t),&\\
v^2(l_2,t)=u(l_2,t),&\\
\tilde{\alpha} v^2_x(l_2,t)-\gamma_2\beta_2 p^2_x(l_2,t)=c_1u_x(l_2,t),\\
\beta_2 p^2_x(l_2,t)=\gamma_2\beta_2 v^2_x(l_2,t),&   t\in (0,\infty),\\
(v^1, p^1, u, v^2, p^2) (\cdot,0)=(v^1_0, p^1_0, u_0, v^2_0, p^2_0)(\cdot),\\
(v^1_t, p^1_t, u_t ,v^2_t,p^2_t)(\cdot,0)=(v^1_1, p^1_1, u_1, v^2_1, p^2_1)(\cdot)
\end{array}
\right.	
\end{equation}
with the assumptions $b_2=0$, $b_1\in L^{\infty}(0,l_1)$, $b_3\in L^{\infty}(l_2,L),$ and
$$
\left\{\begin{array}{ll}
b_1(x)\geq b_{1,0}>0\quad \text{in}\quad (r_1,r_2)\subset (0,l_1),\ \text{and}\  b_1(x)=0, \quad x \in (0,l_1)\backslash (r_1,r_2),\\
b_3(x)\geq b_{3,0}>0\quad \text{in}\quad (r_5,r_6)\subset (l_2,L),\ \text{and}\  b_3(x)=0\quad \text{in}\quad (l_2,L)\backslash (r_5,r_6).
\end{array}
\right.
$$
By adopting analogous arguments as in Section \ref{Section-EPE-LD-P}, we conjecture that one may prove that the system \eqref{PEP} is exponentially stable.
Furthermore, by assuming that $b_1=b_3=0$ and $b_2\in L^{\infty}(l_1,l_2)$ such that
$$
b_2(x)\geq b_{2,0}>0\quad \text{in}\quad (r_3,r_4)\subset (l_1,l_2),\ \text{and}\  b_2(x)=0\quad \text{in}\quad (l_1,l_2)\backslash (r_3,r_4),
$$
analogous stability results as  in Section \ref{PE-LD-E} may also be obtained. \\
\\

\noindent Another open problem, which  deserves to be investigated, is the stability of the Elastic-Piezoelectric-Elastic design with two dampings terms acting only on the elastic part
\begin{equation*}
\left\{\begin{array}{ll}
u_{tt}-c_1u_{xx}+k_1(x)u_t=0,&(x,t) \in(0,l_1)\times (0,\infty),\\
\rho v_{tt}-\alpha v_{xx}+\gamma\beta p_{xx}=0,& (x,t) \in(l_1,l_2)\times (0,\infty),\\
\mu p_{tt}-\beta p_{xx}+\gamma \beta v_{xx}=0,& (x,t) \in (l_1,l_2)\times (0,\infty),\\
y_{tt}-c_2y_{xx}+k_2(x)y_t=0,& (x,t) \in (l_2,L)\times (0,\infty),\\
u(0,t)=y(L,t)=0,\\
v(l_1,t)=u(l_1,t),\\
v(l_2,t)=y(l_2,t),\\
\alpha v_x(l_1,t)-\gamma\beta p_x(l_1,t)=c_1u_x(l_1,t),&\\
\alpha v_x(l_2,t)-\gamma\beta p_x(l_2,t)=c_2y_x(l_2,t),&\\
\beta p_x(l_1,t)=\gamma\beta v_x(l_1,t),&\\
\beta p_x(l_2,t)=\gamma\beta v_x(l_2,t), & t\in (0,\infty),\\
(u, v, p, y, u_t, v_t, p_y, y_t) (\cdot,0))=(u_0, v_0, p_0, y_0, u_1, v_1, p_1, y_1)(\cdot),
\end{array}
\right.	
\end{equation*}
with $k_1\in L^{\infty}(0,l_1)$ and $k_2\in L^{\infty}(l_2,L)$ such that
$$
\left\{\begin{array}{l}
k_1(x)\geq k_{1,0}>0\quad \text{in}\quad (r_1,r_2)\subset (0,l_1),\ \text{and}\  k_1(x)=0\quad \text{in}\quad (0,l_1)\backslash (r_1,r_2),\\
k_2(x)\geq k_{2,0}>0\quad \text{in}\quad (r_5,r_6)\subset (l_2,L),\ \text{and}\  k_2(x)=0\quad \text{in}\quad (l_2,L)\backslash (r_5,r_6).
\end{array}
\right.
$$


\begin{thebibliography}{10}
\bibitem{MR2557148}B. Adamczewski, and  T. Rivoal. Irrationality measures for some automatic real numbers. {\em Math. Proc. Cambridge Philos. Soc.}. \textbf{147}, 659-678 (2009).

\bibitem{Abdelaziz1} M. Afilal, A. Soufyane, and M. Santos. Piezoelectric beams with magnetic effect and localized damping. {\em Math. Control Relat. Fields}. \textbf{13}, 250-264 (2023).

\bibitem{ANOS} M. Akil, S. Nicaise, A.\"{O}. \"{O}zer, H. Saleh, {\sl Advancing Insights into the Stabilization of Novel Serially-Connected Magnetizable Piezoelectric and Elastic Beams}, submitted, 2024.

\bibitem{https://doi.org/10.48550/arxiv.2204.00283}
M. Akil. Stability of piezoelectric beam with magnetic effect under (Coleman or Pipkin)-Gurtin thermal law. {\em Z. Angew. Math. Phys.} \textbf{73}, Paper No. 236, 31 (2022).

\bibitem{MR4534451}M. Akil, A. Soufyane, and Y. Belhamadia. Stabilization Results of a Piezoelectric Beams with Partial Viscous Dampings and Under Lorenz Gauge Condition. {\em Appl. Math. Optim.}. \textbf{87}, Paper No. 26 (2023).


\bibitem{AnLiuKong}
Y.~An, W.~Liu, and A.~Kong.
\newblock Stability of piezoelectric beams with magnetic effects of fractional
  derivative type and with/without thermal effects.
\newblock 2021.

\bibitem{Arendt01}
W.~Arendt, and C.~J.~K. Batty.
\newblock  Tauberian theorems and
  stability of one-parameter semigroups.
\newblock {\em Trans. Amer. Math. Soc.}, 306(2):837--852, 1988.



\bibitem{Batty01}
C.~J.~K. Batty, and T.~Duyckaerts.
\newblock Non-uniform stability for bounded semi-groups on {B}anach spaces.
\newblock {\em J. Evol. Equ.}, 8(4):765--780, 2008.

\bibitem{MR3873184}I. Bondareva, M. Luchin, and V.  Salikhov. Symmetrized polynomials in a problem of estimating the irrationality measure of the number $\ln 3$. {\em Chebyshevskii Sb.} \textbf{19}, 15-25 (2018).

\bibitem{Borichev01}
A.~Borichev, and Y.~Tomilov.
\newblock Optimal polynomial decay of functions and operator semigroups.
\newblock {\em Math. Ann.}, 347(2):455--478, 2010.

\bibitem{Bugeaud} Y. Bugeaud. Approximation by algebraic numbers. (Cambridge University Press, Cambridge, 2004).

\bibitem{Bugeaud:18}
Y. Bugeaud.
\newblock Effective irrationality measures for real and {$p$}-adic roots
              of rational numbers close to 1, with an application to
              parametric families of {T}hue-{M}ahler equations.
\newblock {\em Math. Proc. Cambridge Philos. Soc.}, 164(1):99--108, 2018.

\bibitem{Chen} G. Chen, {\sl Energy decay estimates and exact boundary value controllability for wave equation in a bounded domain,} J. Math. Pures Appl., 5, 249-274, 1979.
\bibitem{Chen2} F. Chen et al, Smart Materials and  Structures, 27,  095022, 2018.

\bibitem{Dag1} C. Dagdeviren, P. Joe, O.L.Tuzman, K. Park, K.J. Lee, Y. Shi, Y. Huang, and J.A. Rogers. { Recent Progress in Flexible and Stretchable Piezoelectric Devices for Mechanical Energy Harvesting, Sensing and Actuation}, Extreme Mechanics Letter, {9(1)}, 269-281, 2016.

\bibitem{Dag2} C. Dagdeviren, and  L. Zhang. {  Methods and Apparatus for Imaging with Conformable Ultrasound Patch,} US Patent App. 16/658, 237, 2020.
\bibitem{Dong} W. Dong, L. Xiao, W. Hu, C.  Zhu, Y. Huang, Z. Yin,  {  Wearable human–machine interface based on PVDF piezoelectric sensor}, Transactions of the Institute of Measurement and Control, 39-4, 398–403, 2017.


\bibitem{DosSan} M.J. Dos Santos, {A.\"O. \"Ozer}, M.M. Freitas, A.J.A Ramos, and D.S. Almeida Junior. {  On global attractors for a novel nonlinear piezoelectric beam model with magnetic effects and long-range memory,} { Zeitschrift für angewandte Mathematik und Physik,} 73, Article Number: 136, 2022.



\bibitem{Ala} S. El Alaoui, A.{\"O}. {\"O}zer, and M. Ouzahra, {   Boundary Feedback Stabilization of Nonlinear Piezoelectric Extensible Beams,}  {\sl Zeitschrift f\"ur angewandte Mathematik und Physik}, 74-15, 2023.

\bibitem{Fatori1} L.H. Fatori, E. Lueders, and J.E.L Rivera. {  Transmission problem
for hyperbolic thermoelastic systems}, Journal of Thermal Stresses 26, 739–
763, 2003.
\bibitem{Fatori2}  L.H. Fatori, and C.L. Antonio. {  Exponential decay of serially connected elastic wave,} Boletim da Sociedade Paranaense de Matemática, 28-1, 2010.

\bibitem{Feng1} B. Feng,  and {A.\"O. \"Ozer.} {  Exponential stability results for the boundary-controlled fully-dynamic piezoelectric beams with various distributed and boundary delays,} { Journal of Mathematical Analysis and Applications}.  508-1, 125845, 2022.


\bibitem{Feng2} B. Feng, and {A.\"O. \"Ozer.} {  Long-time behavior of a nonlinearly damped Rao-Nakra sandwich beam,} { Applied Mathematics and Optimization}, 87-19, 2023.









\bibitem{Freitas1} M.M. Freitas,  {A.\"O. \"Ozer,} and A. J. A. Ramos. {  Long time  dynamics and   upper semi-continuity  with respect to magnetic permeability of attractors  to electrostatic piezoelectric beams with nonlinear boundary dissipation,} { ESAIM: Control, Optimisation and Calculus of Variations,} Volume 28, Article Number 39, 2022.

\bibitem{Lag1} J. Lagnese, {\sl Decay of solutions of wave equations in a bounded region with boundary dissipation,} J. Diff. Eqns., 50, 163-182, 1983.


\bibitem{Lions} G. Duvaut, and J.L. Lions. {  Inequalities in Mechanics and Physics,} (Springer-
365 Verlag-1976).

\bibitem{Ebrah} F. Ebrahimi, and  M.R. Barati. {  Vibration analysis of smart piezoelectrically actuated nano-beams subjected to magneto-electrical field in thermal environment}, Journal of Vibration and Control, {(24-3)} (2016), 549--564.

    \bibitem{Huang01}
F.~L. Huang.
\newblock Characteristic conditions for exponential stability of linear
  dynamical systems in {H}ilbert spaces.
\newblock {\em Ann. Differential Equations}, 1(1):43--56, 1985.


\bibitem{Lag} J. E. Lagnese, {  Boundary Stabilization of Thin Plates,} SIAM, Philadelphia, 1989.

\bibitem{Leugering} J. E. Lagnese, G. Leugering, and E.J.P.G. Schmidt. {  Modeling, Analysis and Control of Dynamic Elastic
Multi-Link Structures}, Birkhäuser, Boston (1994).

\bibitem{Ling} M. Ling, L. L. Howell, J. C. , G. Chen, {\sl  Kinetostatic and Dynamic Modeling of Flexure-Based Compliant Mechanisms: A Survey}, Appl. Mech. Rev., 72(3): 030802, 2020.


\bibitem{RaoLiu01}
Z.~Liu, and B.~Rao.
\newblock Characterization of polynomial decay rate for the solution of linear
  evolution equation.
\newblock {\em Z. Angew. Math. Phys.}, 56(4):630--644, 2005.


\bibitem{Liu} G. Liu, {A.\"O. \"Ozer,} and M. Wang. {  Longtime dynamics for a novel piezoelectric beam model with creep and thermo-viscoelastic effects,} {Nonlinear Analysis: Real World Applications}, (68), 103666, 2022.

\bibitem{MR2539543}R. Marcovecchio.  The Rhin-Viola method for $\log 2$. {\em Acta Arith.}. \textbf{139}, 147-184 (2009).

\bibitem{MR2196997}T. Matala-aho, K. V\"an\"anen, and W. Zudilin. New irrationality measures for $q-$logarithms. {\em Math. Comp.} \textbf{75}, 879-889 (2006).


\bibitem{Naso1} A. Marzocchi, J.E.M Rivera, and M.G. Naso. {  Asymptotic behaviour and exponential stability for a transmission problem in thermoelasticity}, Mathematical Methods in the Applied Sciences 25-11, 955-980, 2002.

\bibitem{Morris-Ozer2013}
K.~Morris, and A.{\"O}. {\"O}zer.
\newblock Strong stabilization of piezoelectric beams with magnetic effects.
\newblock pages 3014--3019, 2013.

\bibitem{Morris-Ozer2014}
K.~A. Morris, and A.{\"O}. {\"O}zer.
\newblock Modeling and stabilizability of voltage-actuated piezoelectric beams
  with magnetic effects.
\newblock {\em SIAM J. Control. Optim.}, 52-4,  2371--2398, 2014.

\bibitem{OzerMCSS}
A.{\"O}. {\"O}zer. \newblock Further stabilization and exact observability results for voltage-actuated piezoelectric beams with magnetic effects.
newblock {\em Math. Control Signals Syst. } 27, 219–244, 2015.

\bibitem{OzerIEEETAC}
A.{\"O}. {\"O}zer.  \newblock Modeling and controlling an active constrained layer (ACL) beam actuated by two voltage sources with/without magnetic effects,
\newblock {\em IEEE Transactions on Automatic Control.} 62-12,  6445-6450, 2017.



\bibitem{pruss01}
J.~Pr\"uss.
\newblock {On the spectrum of
  {$C_{0}$}-semigroups}.
\newblock {\em Trans. Amer. Math. Soc.}, 284(2):847--857, 1984.

\bibitem{Ramos2018}
{A.J.A. Ramos}, C. S. L. Gon\c{c}alves,  and {S. S. Corr\^ea Neto.}
\newblock Exponential stability and numerical treatment for piezoelectric beams
  with magnetic effect.
\newblock {\em ESAIM: M2AN}, 52(1):255--274, 2018.

\bibitem{Ramos} A.J.A. Ramos,  et al. {  Equivalence between exponential stabilization and boundary observability for piezoelectric beams with magnetic effect}, {\sl Zeitschrift f\"ur angewandte Mathematik und Physik}, (70-60),  2019.
			
\bibitem{Rivera} J.E.M Rivera, and H.P. Oquendo, {  The Transmission Problem of Viscoelastic Waves}, {\sl Acta Applicandae Mathematicae}, 62, 1–21 (2000).

\bibitem{MR1826005}G. Rhin, and  C. Viola. The group structure for $\zeta(3)$. {\em Acta Arith.} \textbf{97}, 269-293 (2001)


\bibitem{Ru} C. Ru, X. Liu, and Y. Sun. {  Nanopositioning Technologies: Fundamentals and Applications,} (Springer International, 2016).

\bibitem{ROZENDAAL2019359}
J.~Rozendaal, D.~Seifert, and R.~Stahn.
\newblock Optimal rates of decay for operator semigroups on {H}ilbert spaces.
\newblock {\em Advances in Mathematics}, 346:359 -- 388, 2019.

\bibitem{Abdelaziz2}
A.~Soufyane, M.~Afilal, and M.~L. Santos.
\newblock Energy decay for a weakly nonlinear damped piezoelectric beams with
  magnetic effects and a nonlinear delay term.
\newblock {\em Zeitschrift f\"ur angewandte Mathematik und Physik}, 72(4),
  Aug. 2021.


\bibitem{Shi} Q. Shi, T. Wang, and C. Lee. {  MEMS Based Broadband Piezoelectric Ultrasonic Energy Harvester (PUEH) for Enabling Self-Powered Implantable Biomedical Devices}, Sci Rep 6 (2016), 24946.
\bibitem{Smith} R.C. Smith, {   Smart Material Systems}, (Society for Industrial and Applied Mathematics, 2005).


\bibitem{Voss} T. Voss, and J.M.A. Scherpen, {  Stabilization and shape control of a 1D piezoelectric Timoshenko beam,} Automatica, (47-12),
			2780-85, 2011.

\bibitem{Irrationality} E. Weisstein. ``Irrationality Measure''. From \textit{MathWorld}$--$A Wolfram Web Resource. \url{https://mathworld.wolfram.com/IrrationalityMeasure.html}

\bibitem{Wilson} A.\"{O}. \"{O}zer, and 	W. Horner. {  Uniform boundary observability of Finite Difference approximations of non-compactly-coupled piezoelectric beam equations,} {\sl Applicable Analysis,} (101-5), 1571-1592.	

 \bibitem{Yang} J. Yang. { {An Introduction to the Theory of Piezoelectricity,}}  New York (Springer-2005).

\bibitem{MR4450079} H.Zhang,  G. Xu,  \& Z. Han. Stability of multi-dimensional nonlinear piezoelectric beam with viscoelastic infinite memory. {\em Zeitschrift f\"ur angewandte Mathematik und Physik} \textbf{73}, No. 159, 18 (2022).

\bibitem{ZeilbergerZudilin:2020} D.Zeilberger  \&  W.Zudilin. The irrationality measure of $\pi$ is at most 7.103205334137\dots  . {\em Mosc. J. Comb. Number Theory}. \textbf{9}, 407-419 (2020).

\bibitem{Zudilin:2013}W. Zudilin,  On the irrationality measure of $\pi^2$. {\em Uspekhi Mat. Nauk}. \textbf{68}, 171-172 (2013).

\bibitem{MR2138454}W. Zudilin. Approximations to $q-$logarithms and $q-$dilogarithms, with applications to $q-$zeta values. {\em Zap. Nauchn. Sem. S.$-$Petetsburg. Otdel. Mat. Inst. Steklov. (POMI)}. \textbf{322}, 107-124, 253-254 (2005).

\end{thebibliography}

\noindent {\bf Acknowledgement:} (i) The authors would like to thank Prof. Y. Bugeaud (IRMA, Université de Strasbourg) on fruitfull discussions about the notion of irrationality measures. (ii) A.\"{O}. \"{O}zer  gratefully acknowledges the financial support of the National Science Foundation of USA under Cooperative Agreement No: 1849213.\\
$\newline$

\noindent {\bf Declarations:}\\ \textbf{Competing interest} The authors have not disclosed any competing interests.

\end{document}